\setlist[enumerate,1]{font=\bfseries,label=\arabic*.}
\def\@bibdataout@init{}\def\pre@bibdata{}\makeatother
\colorlet{darkred}{red!70!black}
\colorlet{darkblue}{blue!50!black}
\colorlet{medgreen}{green!70!black!70!white}
\newtheorem{theorem}{Theorem}[section]
\newtheorem{proposition}[theorem]{Proposition}
\newtheorem{lemma}[theorem]{Lemma}
\newtheorem{corollary}[theorem]{Corollary}
\newcommand{\ie}{\emph{i.e.}}
\newcommand{\Cf}{\emph{Cf.}}
\newcommand{\BPP}{\mathsf{BPP}}
\newcommand{\FP}{\mathsf{FP}}
\newcommand{\NPC}{\mathsf{NPC}}
\newcommand{\NP}{\mathsf{NP}}
\newcommand{\ccC}{\mathsf{C}}
\newcommand{\ccP}{\mathsf{P}}
\newcommand{\shPC}{\mathsf{\#PC}}
\newcommand{\shP}{\mathsf{\#P}}
\newcommand{\CSAT}{\mathsf{CSAT}}
\newcommand{\RSAT}{\mathsf{RSAT}}
\newcommand{\ZSAT}{\mathsf{ZSAT}}
\newcommand{\shCSAT}{\mathsf{\#CSAT}}
\newcommand{\shRSAT}{\mathsf{\#RSAT}}
\newcommand{\shZSAT}{\mathsf{\#ZSAT}}
\newcommand{\AD}{\operatorname{AD}}
\newcommand{\Alt}{\operatorname{Alt}}
\newcommand{\Aut}{\operatorname{Aut}}
\newcommand{\Inn}{\operatorname{Inn}}
\newcommand{\MCG}{\operatorname{MCG}}
\newcommand{\Out}{\operatorname{Out}}
\newcommand{\PSp}{\operatorname{PSp}}
\newcommand{\Rub}{\operatorname{Rub}}
\newcommand{\SU}{\operatorname{SU}}
\newcommand{\Sp}{\operatorname{Sp}}
\newcommand{\Sym}{\operatorname{Sym}}
\newcommand{\Tor}{\operatorname{Tor}}
\newcommand{\ab}{{\operatorname{ab}}}
\newcommand{\per}{{\operatorname{per}}}
\newcommand{\rep}{\operatorname{rep}}
\renewcommand{\wr}{\operatorname{wr}}
\newcommand{\bd}{\operatorname{bd}}
\newcommand{\sch}{\operatorname{sch}}
\newcommand{\onto}{\twoheadrightarrow}
\renewcommand{\setminus}{\smallsetminus}
\newcommand{\normaleq}{\unlhd}
\newcommand{\normal}{\lhd}
\newcommand{\cC}{\mathcal{C}}
\newcommand{\yes}{\mathrm{yes}}
\newcommand{\no}{\mathrm{no}}
\newcommand{\ceil}[1]{\lceil #1 \rceil}
\newcommand{\Z}{\mathbb{Z}}
\newcommand{\N}{\mathbb{N}}
\newcommand{\C}{\mathbb{C}}
\newcommand{\AND}{\mathrm{AND}}
\newcommand{\SWAP}{\mathrm{SWAP}}
\newcommand{\NOT}{\mathrm{NOT}}
\newcommand{\OR}{\mathrm{OR}}
\newcommand{\COPY}{\mathrm{COPY}}
\newcommand{\CNOT}{\mathrm{CNOT}}
\newcommand{\CCNOT}{\mathrm{CCNOT}}
\newcommand{\hR}{\hat{R}}
\newcommand{\tM}{\tilde{M}}
\newcommand{\cA}{\mathcal{A}}
\newcommand{\cB}{\mathcal{B}}
\newcommand{\vsubseteq}{\rotatebox{90}{$\subseteq$}}
\newcommand{\veq}{\rotatebox{90}{$=$}}
\newcommand{\vonto}{\rotatebox{270}{$\onto$}}
\newcommand{\defeq}{\stackrel{\mathrm{def}}=}
\newcommand{\Cor}[1]{Corollary~\ref{#1}}
\newcommand{\Fig}[1]{Figure~\ref{#1}}
\newcommand{\Sec}[1]{Section~\ref{#1}}
\newcommand{\Thm}[1]{Theorem~\ref{#1}}
\newcommand{\Lem}[1]{Lemma~\ref{#1}}
\newcommand{\eatline}{\vspace{-\baselineskip}}
\newenvironment{eq}[1]{\begin{equation}\label{#1}}
    {\end{equation}\ignorespacesafterend}
\begin{document}
\title{Computational complexity and 3-manifolds and zombies}
\author{Greg Kuperberg}
\email{greg@math.ucdavis.edu}
\thanks{Partly supported by NSF grant CCF-1319245}
\affiliation{University of California, Davis}

\author{Eric Samperton}
\email{egsamp@math.ucdavis.edu}
\thanks{Partly supported by NSF grant CCF-1319245}
\affiliation{University of California, Davis}

\date{\today}

\begin{abstract}
We show the problem of counting homomorphisms from the fundamental
group of a homology $3$-sphere $M$ to a finite, non-abelian simple
group $G$ is $\shP$-complete, in the case that $G$ is fixed and $M$ is
the computational input.  Similarly, deciding if there is a non-trivial
homomorphism is $\NP$-complete.  In both reductions, we can guarantee
that every non-trivial homomorphism is a surjection.  As a corollary, for
any fixed integer $m \ge 5$, it is $\NP$-complete to decide whether $M$
admits a connected $m$-sheeted covering.

Our construction is inspired by universality results in topological quantum
computation.  Given a classical reversible circuit $C$, we construct $M$
so that evaluations of $C$ with certain initialization and finalization
conditions correspond to homomorphisms $\pi_1(M) \to G$.  An intermediate
state of $C$ likewise corresponds to a homomorphism $\pi_1(\Sigma_g) \to
G$, where $\Sigma_g$ is a pointed Heegaard surface of $M$ of genus $g$.
We analyze the action on these homomorphisms by the pointed mapping class
group $\MCG_*(\Sigma_g)$ and its Torelli subgroup $\Tor_*(\Sigma_g)$.
By results of Dunfield-Thurston, the action of $\MCG_*(\Sigma_g)$ is as
large as possible when $g$ is sufficiently large; we can pass to the
Torelli group using the congruence subgroup property of $\Sp(2g,\Z)$.
Our results can be interpreted as a sharp classical universality property
of an associated combinatorial $(2+1)$-dimensional TQFT.
\end{abstract}

\maketitle

\section{Introduction}
\label{s:intro}

\subsection{Statement of Results}
\label{ss:results}

Given a finite group $G$ and a path-connected topological space $X$, let
\[ H(X,G) = \{f:\pi_1(X) \to G\} \]
be the set of homomorphisms from the fundamental group of $X$ to $G$.
Then the number $\#H(X,G) = |H(X,G)|$ is an important topological invariant
of $X$. For example, in the case that $X$ is a knot complement and $G =
\Sym(n)$ is a symmetric group, $\#H(X,G)$ was useful for compiling a table
of knots with up to 15 crossings \cite{Lickorish:intro}.  (We use both
notations $\#S$ and $|S|$ to denote the cardinality of a finite set $S$,
the former to emphasize algorithmic counting problems.)

Although these invariants can be powerful, our main result is that
they are often computationally intractable, assuming that $\ccP \neq \NP$.
We review certain considerations:
\begin{itemize}
\item We suppose that $X$ is given by a finite triangulation, as a reasonable
standard for computational input.

\item We are interested in the case that $|H(X,G)|$ is intractible because
of the choice of $X$ rather than the choice of $G$.  Therefore we fix $G$.
We are also more interested in the case when $H(X,J)$ is trivial for
every proper subgroup $J < G$.

\item If $G$ is abelian, then $\#H(X,G)$ is determined by the integral
homology group $H_1(X) = H_1(X;\Z)$; both can be computed in polynomial time.
We are thus more interested in the case that $H_1(X) = 0$ and $G$ is perfect,
in particular when $G$ is non-abelian simple.

\item If $X$ is a simplicial complex, or even an $n$-manifold with $n \ge
4$, then $\pi_1(X)$ can be any finitely presented group.   By contrast,
3-manifold groups are highly restricted.  We are more interested in the
case that $X = M$ is a 3-manifold.  If in addition $M$ is closed and $H_1(M)
= 0$, then $M$ is a homology 3-sphere.
\end{itemize}

To state our main result, we pass to the related invariant $\#Q(X,G) =
|Q(X,G)|$, where $Q(X,G)$ is the set of normal subgroups $\Gamma \normaleq
\pi_1(X)$ such that the quotient $\pi_1(X)/\Gamma$ is isomorphic to $G$.

\begin{theorem}  Let $G$ be a fixed, finite, non-abelian simple group.
If $M$ is a triangulated homology 3-sphere, then the invariant $\#Q(M,G)$
is $\shP$-complete via a parsimonious reduction.  The reduction also
guarantees that $\#Q(M,J) = 0$ for any non-trivial, proper subgroup $J < G$.
\label{th:main} \end{theorem}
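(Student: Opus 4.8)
The plan is to give a polynomial-time parsimonious reduction to $\#Q(M,G)$ from a $\shP$-complete counting problem $\shZSAT$ about classical reversible circuits, tailored so as to match the topology below: roughly, given a reversible circuit $C$ on $n+k$ wires built from Toffoli gates $\CCNOT$, count the inputs $x\in\{0,1\}^n$ with $C(x,0^k)$ in a prescribed output pattern. That $\shZSAT$ is $\shP$-complete via parsimonious reductions is obtained from $\shCSAT$ by a standard reversibilization of Boolean circuits together with garbage erasure. Membership $\#Q(M,G)\in\shP$ is easy: a triangulation of $M$ gives a polynomial-size presentation of $\pi_1(M)$, and since $G$ is fixed one checks in polynomial time whether an assignment of generators to $G$ is a homomorphism and whether it is onto; as two surjections $\pi_1(M)\onto G$ have the same kernel exactly when they differ by the fixed finite group $\Aut(G)$, taking as a witness the lexicographically least surjection in each $\Aut(G)$-orbit places $\#Q(M,G)$ in $\shP$.

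For hardness, fix a genus-$g$ handlebody $H$ with a standard complete meridian system, a second copy $H'$, and the gluing $\phi_0\colon\partial H'\to\partial H$ that produces $S^3$; for $c\in\MCG_*(\Sigma_g)$ put $M_c\defeq H\cup_{c\phi_0}H'$. Van Kampen identifies $H(M_c,G)$ with the homomorphisms $\pi_1(\Sigma_g)\to G$ that kill the meridians of $H$ and the $c\phi_0$-images of the meridians of $H'$: writing $L$ for the homomorphisms killing the meridians of $H$, so $L\cong G^g$, and $L'_c$ for those killing the $c\phi_0$-images of the meridians of $H'$, this is the Lagrangian intersection $H(M_c,G)=L\cap L'_c$, and in particular $L\cap L'_1=H(S^3,G)=\{1\}$. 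Two features make this usable: if $c$ lies in the Torelli group $\Tor_*(\Sigma_g)$ then $M_c$ has the homology of $M_1=S^3$ and hence is a homology $3$-sphere; and a triangulation of $M_c$ of size polynomial in $g$ and in the word length of $c$ in Dehn twists can be computed in polynomial time.

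To encode $C$, pick generators $t_1,\dots,t_r$ of $G$ and a non-identity $s\in G$, and for $x\in\{0,1\}^n$ let $\rho_x\in L$ send the $a$-curves of the first $r$ handles to $t_1,\dots,t_r$, the $a$-curve of handle $r+i$ to $s$ if $x_i=1$ and to $1$ otherwise, and all remaining curves to $1$; each $\rho_x$ is onto $G$, and distinct $\rho_x$ have distinct kernels since every element of $\Aut(G)$ fixes $1$ and moves $s$. Since $L\cap L'_1$ is a single point, the idea is to realize $c=c_T\cdots c_1\in\Tor_*(\Sigma_g)$ as a product of gadgets, one per gate of $C$ together with initialization and finalization gadgets, each supported on a bounded number of handles, so that the running intersection $L\cap L'_{c_k\cdots c_1}$ tracks the partial configurations of $C$ under the encoding $\rho$, ending at $H(M_c,G)=L\cap L'_c=\{1\}\cup\{\rho_x : C\text{ accepts }x\}$. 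A gate gadget must be an element of $\Tor_*$ supported on a bounded subsurface that permutes the relevant encoded reps the way $\CCNOT$ permutes bit-triples while fixing the other configuration data. The engine is universality: by Dunfield-Thurston, once $g$ is at least a constant depending only on $G$ (and large enough to carry the wires), the action of $\MCG_*(\Sigma_g)$ on the epimorphisms $\pi_1(\Sigma_g)\onto G$ is as large as possible, containing the alternating group on each of its finitely many, $H_2(G)$-indexed orbits. To inherit this for $\Tor_*$: the action factors through a finite quotient of $\MCG_*(\Sigma_g)$, the image of $\Tor_*(\Sigma_g)$ is normal in that of $\MCG_*(\Sigma_g)$, so by simplicity of the alternating groups it is either all of what we need or trivial, and triviality would exhibit a large alternating group as a quotient of $\Sp(2g,\Z)=\MCG_*(\Sigma_g)/\Tor_*(\Sigma_g)$, contradicting the congruence subgroup property, whose finite quotients are all assembled from the groups $\PSp(2g,\mathbb{F}_p)$. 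Hence $\Tor_*$ already acts with the full available symmetry; and since each gadget lives on bounded genus and there are only a bounded number of gate types, an explicit bounded-length Dehn-twist word for each can be precomputed.

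Assembling the gadgets yields $c\in\Tor_*(\Sigma_g)$ with $g$ and the twist-length of $c$ polynomial in the size of $C$, and triangulating $M_c$ completes a polynomial-time reduction with $H(M_c,G)=\{1\}\cup\{\rho_x : C\text{ accepts }x\}$. Every non-trivial homomorphism here is onto $G$, so $M_c$ is a homology $3$-sphere with $\#Q(M_c,G)=\#\{x : C\text{ accepts }x\}$ --- the $\Aut(G)$ normalization being absorbed because the $\rho_x$ have pairwise distinct kernels --- and $\#Q(M_c,J)=0$ for every non-trivial proper $J<G$, since a surjection $\pi_1(M_c)\onto J$ would be a non-trivial, non-surjective element of $H(M_c,G)$. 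I expect the crux to be exactly this encoding step: simultaneously keeping $g$ and the gadget lengths polynomial, descending the universality from $\MCG_*$ to $\Tor_*$ so that $M_c$ is genuinely a homology sphere, making the correspondence with accepting inputs an exact bijection rather than merely positivity- or parity-preserving, and ensuring that the Lagrangian intersection leaves behind no stray homomorphisms --- in particular none onto a proper subgroup --- uniformly over all finite non-abelian simple $G$.
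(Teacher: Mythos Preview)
Your overall architecture matches the paper's: Heegaard gluing by a Torelli element, Dunfield--Thurston universality for gate gadgets, the congruence subgroup property to descend from $\MCG_*$ to $\Tor_*$, and membership in $\shP$ via a presentation of $\pi_1$. The genuine gaps are in the encoding, and they are exactly what the paper's extra machinery is built to close.

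First, your one-bit-per-handle encoding does not interface with Dunfield--Thurston, which concerns the $\MCG_*(\Sigma_g)$-action on the set $R^0_g$ of \emph{surjections} $\pi_1(\Sigma_g)\onto G$ with trivial Schur invariant, not on values assigned to individual curves; there is no evident Torelli element supported on three handles realizing your Toffoli move on $a$-curve labels. The paper instead takes the alphabet to be $A=R^0_g\cup\{z\}$ with one genus-$g$ subsurface per \emph{symbol}, so that a binary gate lives on $\Sigma_{2g,1}$ and universality on $R^0_{2g}$ applies directly. Second, your claimed equality $H(M_c,G)=\{1\}\cup\{\rho_x:C\text{ accepts }x\}$ cannot hold: the mapping-class action commutes with $\Aut(G)$, so $H(M_c,G)$ is $\Aut(G)$-stable and must contain whole orbits, not single representatives; and $L\cong G^g$ contains vastly more homomorphisms than your $\rho_x$'s (non-surjective ones, and surjective ones not of your shape), with no mechanism offered to keep them out of $L'_c$. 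The paper handles the $\Aut(G)$ issue by lifting Dunfield--Thurston from $\Alt(R^0_g/\Aut(G))$ to the Rubik group $\Rub_{\Aut(G)}(R^0_g)$ (\Thm{th:rubik}, \Thm{th:refine}) and working in the $K$-equivariant circuit model $\ZSAT$; it eliminates stray homomorphisms by choosing gadgets in $\Tor_*$ that additionally fix all of $\hR_{2g}\setminus R_{2g}$ pointwise, combined with the disjointness $I(K)\cap F(K)=\emptyset$ for proper $K\le G$ (\Lem{l:ifr0}) and a postcomputation that converts zombies and $\Aut(G)$-misaligned symbols into a non-finalizing warning alphabet (\Lem{l:zsat}). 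These three ingredients---the Rubik-group refinement, the trivial action on non-surjections, and the zombie/warning postprocessing---are the technical heart of the argument and are not optional; you correctly anticipate that the crux is the encoding step, but the proposal does not yet supply substitutes for them.
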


\Sec{ss:classes} gives more precise definitions of the complexity theory
concepts in \Thm{th:main}.  Briefly, a counting problem is in $\shP$
if there is a polynomial-time algorithm to verify the objects being
counted; it is $\shP$-hard if it is as hard as any counting problem in
$\shP$; and it is $\shP$-complete if it is both in $\shP$ and $\shP$-hard.
A \emph{parsimonious reduction} from a counting problem $g$ to a counting
problem $f$ (to show that $f$ is as hard as $g$) is a mapping $h$, computable
in polynomial time, such that $g(x) = f(h(x))$.  This standard of hardness
tells us not only that $\#Q(M,G)$ is computationally intractible, but
also that any partial information from it (for instance, its parity) is
intractible; see \Thm{th:vv}.  An even stricter standard is a \emph{Levin
reduction}, which asks for a bijection between the objects being counted
that is computable in polynomial time (in both directions).  In fact, our
proof of \Thm{th:main} yields a Levin reduction from any problem in $\shP$
to the problem $\#Q(M,G)$.

The invariants $\#H(X,G)$ and $\#Q(X,G)$ are related by the following
equation:
\begin{eq}{e:homsum} |H(X,G)| = \sum_{J \leq G} |\Aut(J)|\cdot|Q(X,J)|.
\end{eq}
If $\pi_1(X)$ has no non-trivial surjections to any simple group smaller
than $G$, as \Thm{th:main} can provide, then
\begin{eq}{e:hq} |H(X,G)| = |\Aut(G)|\cdot|Q(X,G)| + 1. \end{eq}
Thus we can say that $\#H(M,G)$ is \emph{almost parsimoniously}
$\shP$-complete for homology 3-spheres.  It is parsimonious except for the
trivial homomorphism and up to automorphisms of $G$, which are both minor,
unavoidable corrections.  This concept appears elsewhere in complexity
theory; for instance, the number of 3-colorings of a planar graph is almost
parsimoniously $\shP$-complete \cite{Barbanchon:unique}.

In particular, the fact that $\#Q(M,G)$ is parsimoniously $\shP$-hard
implies that existence is Karp $\NP$-hardness (again see \Sec{ss:classes}).
Thus \Thm{th:main} has the following corollary.

\begin{corollary} Let $G$ be a fixed, finite, non-abelian simple group, and
let $M$ be a triangulated homology 3-sphere regarded as computational input.
Then it is Karp $\NP$-complete to decide whether there is a non-trivial
homomorphism $f:\pi_1(M) \to G$, even with the promise that every such
homomorphism is surjective.
\label{c:np} \end{corollary}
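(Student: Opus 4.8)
The plan is to deduce the corollary from \Thm{th:main} by routine bookkeeping; the theorem was deliberately stated with the extra guarantee that $\#Q(M,J) = 0$ for every nontrivial proper $J < G$, which is exactly what turns the parsimonious counting reduction into a promise-respecting decision reduction. First I would dispose of membership in $\NP$. From a triangulation of $M$ one computes in polynomial time a finite presentation of $\pi_1(M)$ whose numbers of generators and relators are bounded by the size of the triangulation (for instance from the $2$-skeleton, or from a special spine of $M$). Since $G$ is fixed and finite, a homomorphism $f : \pi_1(M) \to G$ is encoded by the tuple of images of these generators, using $O(1)$ bits per generator, and such a tuple is a valid $\NP$-witness precisely when every relator evaluates to $1 \in G$ and at least one generator has nontrivial image --- both checkable in polynomial time. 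Hence the existence of a nontrivial homomorphism $\pi_1(M) \to G$ lies in $\NP$, and this is unaffected by restricting the inputs to those $M$ obeying the surjectivity promise.

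For $\NP$-hardness I would apply the parsimonious (indeed Levin) reduction $h$ from $\#\SAT$ to $\#Q(\cdot,G)$ provided by \Thm{th:main}. For a Boolean formula $\phi$ this yields, in polynomial time, a triangulated homology $3$-sphere $M = h(\phi)$ with $\#Q(M,G) = \#\SAT(\phi)$ and with $\#Q(M,J) = 0$ for every nontrivial proper $J < G$. The kernel of a homomorphism $\pi_1(M) \to G$ with image $J$ belongs to $Q(M,J)$, so the vanishing statement says precisely that every nontrivial homomorphism $\pi_1(M) \to G$ is onto; thus each output $M = h(\phi)$ lies in the promise class of the corollary. Moreover $M$ admits a nontrivial homomorphism to $G$ iff $Q(M,J) \neq \emptyset$ for some $J$ with $1 \neq J \leq G$, iff $Q(M,G) \neq \emptyset$ (taking $J$ to be the image and using that the remaining $Q(M,J)$ vanish), iff $\#\SAT(\phi) > 0$, i.e.\ $\phi$ is satisfiable; equivalently, by \eqref{e:hq}, $|H(M,G)| = |\Aut(G)| \cdot \#\SAT(\phi) + 1 > 1$. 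Therefore $\phi \mapsto h(\phi)$ is a polynomial-time many-one (Karp) reduction from $\SAT$ to the promise-restricted existence problem, so the latter is $\NP$-hard, and combined with the first paragraph it is $\NP$-complete.

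I expect no serious obstacle here: the only step that is not purely formal is the standard fact, used above, that $\pi_1$ of a triangulated $3$-manifold admits a polynomial-size presentation computable in polynomial time; everything else is a direct consequence of \Thm{th:main}. In particular one could replace $\#\SAT$ throughout by any $\shP$-complete problem, such as $\#\CSAT$, whose associated decision problem is already known to be $\NP$-complete, or invoke the remark that the proof of \Thm{th:main} supplies a Levin reduction from \emph{every} problem in $\shP$ to $\#Q(\cdot,G)$ and compose with the $\NP$-completeness of $\SAT$ in the same fashion.
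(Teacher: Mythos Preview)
Your proposal is correct and follows exactly the approach the paper intends: the paper does not give a standalone proof of this corollary but simply remarks that parsimonious $\shP$-hardness of $\#Q(M,G)$ implies Karp $\NP$-hardness of the existence problem, with $\NP$ membership handled as in \Thm{th:inshp}. You have spelled out these two steps in more detail than the paper does, including the observation that the extra guarantee $\#Q(M,J)=0$ for proper $J$ is precisely what places the output $M$ in the promise class, but the argument is the same.
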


\Cor{c:np} in turn has a corollary concerning connected covering spaces.
In the proof of the corollary and later in the paper, we let $\Sym(n)$
be the symmetric group and $\Alt(n)$ be the alternating group, both acting
on $n$ letters.

\begin{corollary} For each fixed $n \ge 5$, it is $\NP$-complete to decide
whether a homology 3-sphere $M$ has a connected $n$-sheeted cover, even
with the promise that it has no connected $k$-sheeted cover with $1 < k < n$.
\label{c:covers} \end{corollary}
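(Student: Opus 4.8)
The plan is to deduce this from \Cor{c:np} applied with the target group $G=\Alt(n)$, which is finite, non-abelian and simple exactly when $n\ge5$, together with the standard dictionary of covering space theory. Recall that connected $n$-sheeted covers of $M$ are classified up to isomorphism by conjugacy classes of index-$n$ subgroups $H\le\pi_1(M)$, and that such an $H$ determines, via the action on its cosets, a transitive permutation representation $\rho\colon\pi_1(M)\to\Sym(n)$ whose kernel is the normal core of $H$. Membership in $\NP$ is then routine: a connected $n$-sheeted cover is certified by its triangulation together with the simplicial covering map (equivalently, by the tuple of monodromy permutations around a spine of $M$), and both connectedness and the number of sheets are checked in polynomial time.

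For $\NP$-hardness I would use the reduction of \Thm{th:main} at $G=\Alt(n)$, which in particular gives a polynomial-time map $\phi\mapsto M_\phi$ sending a $\SAT$ instance $\phi$ to a triangulated homology 3-sphere $M=M_\phi$ such that $\pi_1(M)$ admits a non-trivial homomorphism to $\Alt(n)$ if and only if $\phi$ is satisfiable, and such that $\#Q(M,J)=0$ for every non-trivial proper subgroup $J<\Alt(n)$ --- equivalently, $\pi_1(M)$ has no quotient isomorphic to any such $J$. (In particular every non-trivial homomorphism $\pi_1(M)\to\Alt(n)$ is automatically onto, as $\Alt(n)$ is simple.) I claim the same map reduces $\SAT$ to the covering problem: for these $M$, having a connected $n$-sheeted cover is equivalent to admitting a non-trivial homomorphism $\pi_1(M)\to\Alt(n)$. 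One direction is immediate --- compose a surjection $\pi_1(M)\onto\Alt(n)$ with the standard degree-$n$ action of $\Alt(n)$ (point stabilizer $\Alt(n-1)$, of index $n$) to obtain a transitive representation, hence a connected $n$-sheeted cover.

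The reverse implication is the step that actually uses both hypotheses, and is where I expect to have to take care. A connected $n$-sheeted cover of $M$ yields a transitive $\rho\colon\pi_1(M)\to\Sym(n)$; set $T\defeq\rho(\pi_1(M))$. Since $H_1(M;\Z)=0$, the group $\pi_1(M)$ is perfect, hence so is $T$, and therefore $T=[T,T]\le[\Sym(n),\Sym(n)]=\Alt(n)$ --- this is the point that transitivity alone would not supply, as a transitive subgroup of $\Sym(n)$ need not contain $\Alt(n)$. Thus $T$ is a quotient of $\pi_1(M)$ isomorphic to a subgroup of $\Alt(n)$, and $T\ne1$ because $|T|\ge n\ge5$; by the promise it cannot be isomorphic to a non-trivial proper subgroup, so $T\cong\Alt(n)$ and $\rho$ descends to a surjection onto $\Alt(n)$. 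The extra clause of the statement is handled the same way: a connected $k$-sheeted cover with $1<k<n$ would produce a perfect transitive subgroup of $\Sym(k)$, which, being perfect, lies in $\Alt(k)\le\Alt(n)$ and is non-trivial (order $\ge k\ge2$) and proper in $\Alt(n)$ (order $\le(n-1)!<n!/2$), again contradicting the promise. Together with membership in $\NP$, this gives $\NP$-completeness.
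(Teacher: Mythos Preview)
Your argument is correct and follows the same route as the paper: apply \Cor{c:np} with $G=\Alt(n)$, use perfectness of $\pi_1(M)$ to force the monodromy into $\Alt(n)$, and identify connected $n$-sheeted covers with non-trivial (hence surjective) homomorphisms to $\Alt(n)$. Your write-up is more careful than the paper's terse proof --- in particular you explicitly verify $\NP$ membership and check that the reduced instances satisfy the promise about smaller covers, both of which the paper leaves to the reader.
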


\begin{proof} Recall that $\Alt(n)$ is simple when $n \ge 5$.
The $n$-sheeted covers $\tM$ of $M$ are bijective with homomorphisms
$f:\pi_1(M) \to \Sym(n)$, considered up to conjugation in $\Sym(n)$.
If $M$ is a homology 3-sphere, then $\pi_1(M)$ is a perfect group and we can
replace $\Sym(n)$ by $\Alt(n)$. If $\tM$ is disconnected, then $f$ does not
surject onto $\Alt(n)$.  Thus, we can apply \Cor{c:np} with $G = \Alt(n)$.
\end{proof}

\begin{figure}[htb]
\frame{\includegraphics[width=.75\columnwidth]{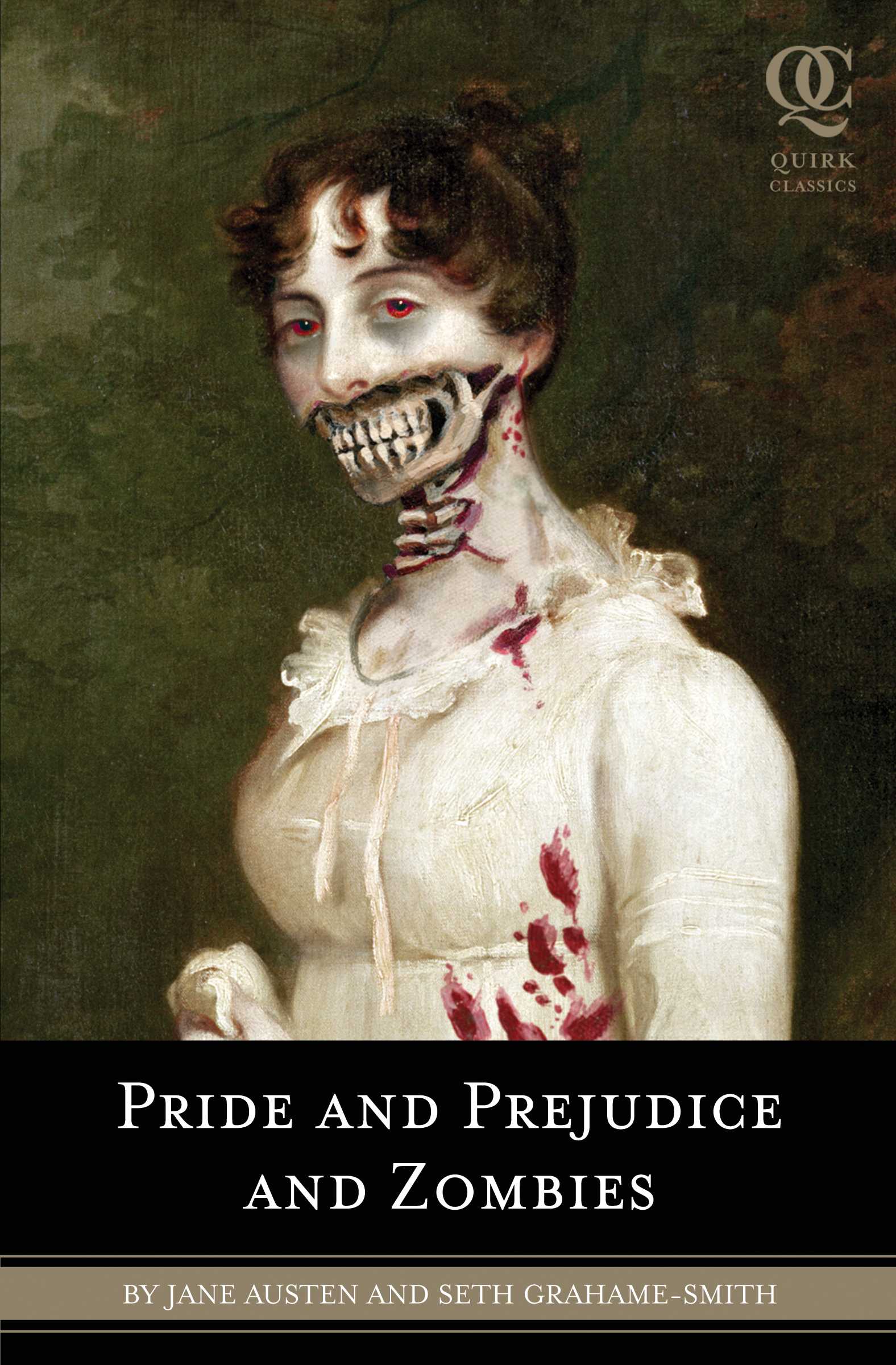}}
\caption{A classic novel, with zombies \cite{AG:zombies}.}
\label{f:ppzombies} \end{figure}

The idea of our proof of \Thm{th:main} is as follows.  Let $\Sigma_g$
be a standard oriented surface of genus $g$ with a marked basepoint, and
let $G$ be a (not necessarily simple) finite group.  Then we can interpret
the set of homomorphisms, or \emph{representation set},
\[ \hR_g(G) \defeq H(\Sigma_g,G) = \{f:\pi_1(\Sigma_g) \to G\} \]
as roughly the set of states of a computer memory.  We can interpret a word
in a fixed generating set of the pointed, oriented mapping class group
$\MCG_*(\Sigma_g)$ as a reversible digital circuit acting on $\hR_g(G)$,
the set of memory states.  Every closed, oriented 3-manifold $M$ can be
constructed as two handlebodies $(H_g)_I$ and $(H_g)_F$ that are glued
together by an element $\phi \in \MCG_*(\Sigma_g)$.  We can interpret
$\phi$ as a reversible digital circuit in which the handlebodies partially
constrain the input and output.

To understand the possible effect of $\phi$, we want to decompose $\hR_g(G)$
into $\MCG_*(\Sigma_g)$-invariant subsets.  The obvious invariant of $f
\in \hR_g(G)$ is its image $f(\pi_1(\Sigma_g)) \leq G$; to account for it,
we first restrict attention to the subset
\[ R_g(G) \defeq \{f: \pi_1(\Sigma_g) \onto G\} \subseteq \hR_g(G) \]
consisting of surjective homomorphisms.

We must also consider a less obvious invariant.  Let $BG$ be the classifying
space of $G$, and recall that the group homology $H_*(G) = H_*(G;\Z)$
can be defined as the topological homology $H_*(BG)$.  Recall that a
homomorphism $f:\pi_1(\Sigma_g) \to G$ corresponds to a map $f:\Sigma_g
\to BG$ which is unique up to pointed homotopy.  Every $f \in \hR_g(G)$
then yields a homology class
\[ \sch(f) \defeq f_*([\Sigma_g]) \in H_2(G), \]
which we call the \emph{Schur invariant} of $f$; it is
$\MCG_*(\Sigma_g)$-invariant.  Given $s \in H_2(G)$, the subset
\[ R_g^s(G) \defeq \{f \in R_g \mid \sch(f) = s\} \]
is then also $\MCG_*(\Sigma_g)$-invariant.  Note that $\sch(f)$ is not always
$\Aut(G)$-invariant because $\Aut(G)$ may act non-trivially on $H_2(G)$.
Fortunately, $R_g^0(G)$ is always $\Aut(G)$-invariant.  We summarize the
relevant results of Dunfield-Thurston in the following theorem.

\begin{theorem}[Dunfield-Thurston {\cite[Thms. 6.23 \& 7.4]{DT:random}}] Let
$G$ be a finite group.
\begin{enumerate}
\item For every sufficiently large $g$ (depending on $G$), $\MCG_*(\Sigma_g)$
acts transitively on $R^s_g(G)$ for every $s \in H_2(G)$.
\item If $G$ is non-abelian and simple, then for every sufficiently large
$g$, the image of the action of $\MCG_*(\Sigma_g)$ on $R^0_g(G)/\Aut(G)$
is $\Alt(R^0_g(G)/\Aut(G))$.
\end{enumerate}
\label{th:dt} \end{theorem}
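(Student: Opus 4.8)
The plan, following Dunfield--Thurston, is to analyze the ``Hurwitz-type'' action of $\MCG_*(\Sigma_g)$ on the representation sets \emph{stably}, i.e.\ under the stabilization embeddings $R^s_g(G) \into R^s_{g+1}(G)$ obtained by gluing on an extra handle on which the homomorphism restricts trivially. The Schur invariant is manifestly additive under this operation, hence preserved, and a mapping class supported away from the new handle acts compatibly before and after stabilizing, so one may analyze the action in the stable range.

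For part~1, the heart of the matter is to identify the stable set of $\MCG_*$-orbits of surjections onto $G$ with $H_2(G)$, the bijection being induced by $\sch$. The conceptual reason: a homomorphism $f\colon\pi_1(\Sigma_g)\to G$ is a map $\Sigma_g\to BG$, and since oriented bordism in dimension $2$ is ordinary homology, $(\Sigma_g,f)$ carries exactly the data of the class $\sch(f)\in H_2(BG)=H_2(G)$; conversely a bordism $W^3\to BG$ between two such surfaces can be built from $1$- and $2$-handle attachments, and following these through the level surfaces realizes the bordism as a composition of stabilizations, destabilizations, and mapping-class moves (handle slides, the $\mathrm{SL}(2,\Z)$-action on a single handle, ``recombination'' moves --- each a product of Dehn twists). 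Thus $f$ and $f'$ are stably $\MCG_*$-equivalent iff $\sch(f)=\sch(f')$. The remaining, more technical, step is the uniformity in $g$: one proves a normal-form result showing that once $g$ exceeds a bound depending only on $|G|$, every $f\in R^s_g(G)$ is already carried by $\MCG_*(\Sigma_g)$ into a standard form concentrated on a bounded number of handles, so no destabilization below genus $g$ is ever needed; this yields transitivity of $\MCG_*(\Sigma_g)$ on $R^s_g(G)$ outright.

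For part~2, write $n=\#\bigl(R^0_g(G)/\Aut(G)\bigr)$ and let $\Phi_g\le\Sym(n)$ be the image of the action. One would argue in four moves. (a)~$\Phi_g$ is transitive, by part~1 with $s=0$ together with the fact that $\Aut(G)$ preserves $R^0_g(G)$. (b)~$\Phi_g\le\Alt(n)$: the group $\MCG_*(\Sigma_g)$ is perfect for $g\ge 3$ (the unpointed mapping class group is perfect for $g\ge3$, and in the Birman exact sequence the symplectic coinvariants of $H_1(\Sigma_g)$ vanish), so it has no quotient $\Z/2$ and its image lands in the alternating group. (c)~$\Phi_g$ is primitive for $g$ large --- one upgrades transitivity to $2$-transitivity by realizing an ordered pair of points of $R^0_g(G)$ on two disjoint large subsurfaces of $\Sigma_g$ and then moving them independently, reducing once more to part~1. (d)~$\Phi_g$ contains an element of small support, indeed a $3$-cycle, obtained from an explicit mapping class supported on a subsurface of bounded genus whose action depends only on the restriction of $f$ there; here one uses that $G$ is non-abelian simple. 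Jordan's theorem then forces a primitive subgroup of $\Sym(n)$ containing a $3$-cycle to contain $\Alt(n)$, and with (b) this gives $\Phi_g=\Alt(n)$. (Alternatively, once $2$-transitivity is in hand one can appeal to the classification of $2$-transitive groups and check that $n$ is too generic to be the degree of any exceptional example lying inside $\Alt(n)$.)

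I expect step~(d) to be the main obstacle. Naive subsurface-supported Dehn twists are \emph{not} short --- a single twist typically moves a positive proportion of the $n$ points --- so producing a genuinely short element (or any configuration to which Jordan's theorem or a multiply-transitive classification can be applied) requires combining the local representation theory on a small subsurface with the global transitivity and primitivity already established, while simultaneously controlling how fast $n$ grows with $g$. By comparison, transitivity feeds into primitivity fairly formally once part~1 is available, and the alternating-versus-symmetric dichotomy is settled cheaply by perfectness of the mapping class group.
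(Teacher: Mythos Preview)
This theorem is not proved in the paper at all: it is quoted verbatim from Dunfield--Thurston \cite[Thms.~6.23 \& 7.4]{DT:random} and used as a black box. The paper's own contribution begins with \Thm{th:refine}, which \emph{refines} this result (passing to the Torelli group, lifting to the Rubik group, and restricting to the subgroup fixing non-surjective homomorphisms), and that refinement is proved by combining the cited theorem with \Lem{l:zorn}, \Lem{l:symplectic}, \Lem{l:rubikquo}, \Lem{l:outgrow}, and \Thm{th:rubik}. So there is no ``paper's own proof'' of \Thm{th:dt} to compare against; your proposal is really a sketch of the Dunfield--Thurston argument itself.

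As such a sketch, your outline of part~1 (stable classification by $H_2(G)$ via bordism, then a normal-form argument to get unstable transitivity for large $g$) is faithful to their method. For part~2, your steps (a)--(c) are close to what they do; in particular the paper explicitly notes (in the proof of \Thm{th:refine}) that $K$-set $2$-transitivity on $R^0_g$ follows from applying part~1 to $G\times G$ together with \Lem{l:sjoint}, which is your step~(c). Your step~(d), however, is not how Dunfield--Thurston actually finish: rather than producing a $3$-cycle and invoking Jordan, they argue via a growth/counting comparison and a Goursat-type analysis (their ``Hall's lemma'' machinery, cf.\ \Lem{l:sjoint} here) to force the image to be the full alternating group once $g$ is large. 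Your honest flag that (d) is the hard step is well placed --- it is exactly the point where a naive Jordan-style argument does not go through and where their paper does real work by a different route.
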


To make effective use of \Thm{th:dt}, we strengthen its second part in
three ways to obtain \Thm{th:refine}.   First, \Thm{th:dt} holds for the
pointed Torelli group $\Tor_*(\Sigma_g)$.  Second, we define an analogue
of alternating groups for $G$-sets that we call \emph{Rubik groups}, and
we establish \Thm{th:rubik}, a non-trivial structure theorem to generate a
Rubik group.  \Thm{th:refine} gives a lift of the image of $\MCG_*(\Sigma_g)$
from $\Alt(R^0_g(G)/\Aut(G))$ to the Rubik group $\Rub_{\Aut(G)}(R^0_g(G))$.
Third, we still obtain the image $\Rub_{\Aut(G)}(R^0_g(G))$ even if
we restrict to the subgroup of $\Tor_*(\Sigma_g)$ that pointwise fixes
$\hR_g(G) \setminus R_g(G)$, the set of non-surjective homomorphisms.

As a warm-up for our proof of \Thm{th:main}, we can fix $g$, and try
to interpret
\[ A = R_g^0(G)/\Aut(G) \]
as a computational alphabet.  If $g$ is large enough, then we can apply
\Thm{th:dt} to $R^0_{2g}(G)$ to obtain a universal set of reversible binary
gates that act on $A^2 \subset R^0_{2g}(G)/\Aut(G)^2$, implemented as mapping
class elements or \emph{gadgets}.  (A gadget in computational complexity is
an informal concept that refers to a combinatorial component of a complexity
reduction.)  The result can be related to a certain reversible circu
I'm not a scientist. However, I am well-read and take pride in reading both sides of most issues.it
model $\RSAT_{A,I,F}$.  (See \Sec{ss:circuits}.  The $\shP$-hardness of
$\RSAT$, established in \Thm{th:rsat}, is a standard result but still takes
significant work.)  We can convert a reversible circuit of width $n$ to an
element $\phi \in \MCG_*(\Sigma_{ng})$ that acts on $A^n$, and then make
$M$ from $\phi$.  In this way, we can reduce $\shRSAT_{A,I,F}$ to $\#Q(M,G)$.

For our actual reduction, we will need to take steps to address three
issues, which correspond to the three ways that \Thm{th:refine} is sharper
than \Thm{th:dt}.
\begin{itemize}
\item We want the larger calculation in $\hR_{ng}(G)$ to avoid symbols in
$\hR_g(G) \setminus R^0_g(G)$ that could contribute to $\#Q(M,G)$.

\item We want a parsimonious reduction to $\#Q(M,G)$, which means that we
must work with $R^0_g(G)$ rather than its quotient $A$.

\item Mapping class gadgets should be elements of the Torelli group,
to guarantee that $M$ is a homology 3-sphere.
\end{itemize}

To address the first issue: We can avoid states in $R^s_g(G)$ with
$s \ne 0$ because, if a surface group homomorphism $f:\pi_1(\Sigma_g)
\onto G$ has $\sch(f) \ne 0$, then it cannot extend over a handlebody.
If $f(G)$ has a non-trivial abelianization, then the fact that we will
produce a homology 3-sphere will kill its participation.  If $f$ is not
surjective but $f(G)$ is perfect, then we will handle this case by acting
trivially on $R_g(K)$ for a simple quotient $K$ of $f(G)$.  The trivial
homomorphism $z \in \hR_g(G)$ is particularly problematic because it cannot
be eliminated using the same techniques; we call it the \emph{zombie symbol}.
(See also \Fig{f:ppzombies} for another source of our ideas.)  We define
an ad hoc reversible circuit model, $\ZSAT$, that has zombie symbols.
We reduce $\RSAT$ to $\ZSAT$ by converting the zombie symbols to warning
symbols that do not finalize, unless all of the symbols are zombies.
The full construction, given in Lemmas \ref{l:zsat} and \ref{l:glue},
is more complicated because these steps must be implemented with binary
gates in $\MCG_*(\Sigma_{2g})$ rather than unary gates in $\MCG_*(\Sigma_g)$.

To address the second issue: A direct application of \Thm{th:dt} would
yield a factor of $|\Aut(G)|^n$ in the reduction from $\shRSAT_{A,I,F}$ to
$\#H(M,G)$, when the input is a reversible circuit of width $n$.  We want
to reduce this to a single factor of $|\Aut(G)|$ in order to construct a
parsimonious reduction to $\#Q(M,G)$.  The $\ZSAT$ model also has an action
of $K = \Aut(G)$ on its alphabet to model this.  \Lem{l:zsat} addresses
the problem by relying on the Rubik group refinement in \Thm{th:refine},
and by creating more warning symbols when symbols are misaligned relative
to the group action.

To ensure that the resulting manifold is a homology 3-sphere, we
implement gates in the pointed Torelli subgroup $\Tor_*(\Sigma_g)$ of
$\MCG_*(\Sigma_g)$.  This is addressed in \Thm{th:refine}.  Recall that
$\Tor_*(\Sigma_g)$ is the kernel of the surjective homomorphism
\[ f:\MCG_*(\Sigma_g) \to H_1(\Sigma_g) \cong \Sp(2g,\Z) \]
where $H_1(\Sigma_g)$ is equipped with its integral symplectic intersection
form.  The proof of \Thm{th:refine} uses rigidity properties of $\Sp(2g,\Z)$
combined with Goursat's lemma (\Lem{l:goursat}).

\begin{figure}[htb]
\[ \CSAT \xrightarrow{\text{Sec. \ref{ss:circuits}}} \RSAT
    \xrightarrow{\text{Sec. \ref{ss:zombies}}} \ZSAT
    \xrightarrow{\text{Sec. \ref{ss:mcg}}} \#Q(M,G) \]
\caption{The reductions in the proof of \Thm{th:main}.}
\label{f:reductions} \end{figure}

\Fig{f:reductions} summarizes the main reductions in the proof of
\Thm{th:main}.

\subsection{Related work}
\label{ss:related}

As far as we know, the closest prior result to our \Thm{th:main} is due to
Krovi and Russell \cite{KR:finite}.  Given a link $L \subseteq S^3$, they
consider a refinement $\#H(S^3 \setminus L,G,C)$ of $\#H(S^3 \setminus L,G)$
in which they only count the group homomorphisms that send the meridian
elements of $\pi_1(S^3 \setminus L)$ to a specific conjugacy class $C
\subseteq \Alt(m)$.  They show that the exact value is $\shP$-complete
when $m \ge 5$, but they do not obtain a parsimonious reduction.  Instead,
they retain an exponentially small error term.  In particular, they do
not obtain $\NP$-hardness for the existence problem.  However, in their
favor, we found it easier to prove \Thm{th:main} in the case of closed
3-manifolds than in the case of link complements, which we will address in
future work \cite{K:coloring}.

\begin{figure}[htb]
\begin{tabular}{l|c|c|c|c|c|c}
& \multicolumn{2}{|c}{one equation} &
    \multicolumn{2}{|c}{equations} &
    \multicolumn{2}{|c}{homomorphisms} \\ \hline
finite target $G$ & $\exists$ \cite{GR:solving} & \#  \cite{NJ:counting}
    & $\exists$ \cite{GR:solving} & \#  \cite{NJ:counting}
    & $\exists$ & \# \\ \hline
abelian & $\ccP$ & $\FP$ & $\ccP$ & $\FP$ & $\ccP$ & $\FP$ \\
nilpotent & $\ccP$ & ? & $\NPC$ & $\shPC$ & ? & ? \\
solvable & ?& ? & $\NPC$ & $\shPC$ & ? & ? \\
non-solvable & $\NPC$ & $\shPC$ & $\NPC$ & $\shPC$ & ? & ? \\
non-ab. simple & $\NPC$ & $\shPC$ & $\NPC$ & $\shPC$ & $\NPC$! & $\shPC$!
\end{tabular}
\caption{The complexity of solving equations over or finding homomorphisms
to a fixed finite target group $G$.  Here $\ccP$ denotes polynomial time for
a decision problem, $\FP$ denotes polynomial time for a function problem,
$\NPC$ is the class of $\NP$-complete problems, and $\shPC$ is the class of
$\shP$-complete problems.  Exclamation marks indicate results in this paper.}
\label{f:knownunknowns}
\end{figure}

We can also place \Thm{th:main} in the context of other counting
problems involving finite groups.   We summarize what is known in
\Fig{f:knownunknowns}.  Given a finite group $G$, the most general analogous
counting problem is the number of solutions to a system of equations that
may allow constant elements of $G$ as well as variables.  Nordh and Jonsson
\cite{NJ:counting} showed that this problem is $\shP$-complete if and only
if $G$ is non-abelian, while Goldman and Russell \cite{GR:solving} showed
that the existence problem is $\NP$-complete.  If $G$ is abelian, then any
finite system of equations can be solved by the Smith normal form algorithm.
These authors also considered the complexity of a single equation.  In this
case, the existence problem has unknown complexity if $G$ is solvable but
not nilpotent, while the counting problem has unknown complexity if $G$
is solvable but not abelian.

If all of the constants in a system of equations over $G$ are set to $1
\in G$, then solving the equations amounts to finding group homomorphisms
$f:\Gamma \to G$ from the finitely presented group $\Gamma$ given by the
equations.  By slight abuse of notation, we can call this counting problem
$\#H(\Gamma,G)$.  This is equivalent to the topological invariant $\#H(X,G)$
when $X$ is a simplicial complex, or even a triangulated $n$-manifold for
any fixed $n \ge 4$; in this case, given any finitely presented $\Gamma$,
we can construct $X$ with $\Gamma = \pi_1(X)$ in polynomial time.  To our
knowledge, \Thm{th:main} is a new result for the invariant $\#H(\Gamma,G)$,
even though we specifically construct $\Gamma$ to be a 3-manifold group
rather than a general finitely presented group.  For comparison,
both the non-triviality problem and the word problem are as difficult as
the halting problem for general $\Gamma$ \cite{Poonen:sampler}, while the
word problem and the isomorphism problem are both recursive for 3-manifold
groups \cite{AFW:decision,K:homeo3}.

In the other direction, if $M$ is a closed 2-manifold, then
there are well known formulas of Frobenius-Schur and Mednykh for
$\#H(M,G)$ \cite{FS:gruppen,Mednykh:compact,FQ:finite} for any
finite group $G$ as a function of the genus and orientability of $M$
\cite{FS:gruppen,Mednykh:compact,FQ:finite}.  Mednykh's formula was
generalized by Chen \cite{Chen:seifert} to the case of Seifert-fibered
3-manifolds.  In \Sec{ss:standard}, we give a generalization of these
formulas to the class of bounded-width simplicial complexes.

Our approach to \Thm{th:main} (and that of Krovi and Russell for
their results) is inspired by quantum computation and topological
quantum field theory.  Every unitary modular tensor category (UMTC)
$\cC$ yields a unitary 3-dimensional topological quantum field theory
\cite{RT:ribbon,RT:manifolds,Turaev:quantum}.  The topological quantum
field theory assigns a vector space $V(\Sigma_g)$, or \emph{state
space}, to every oriented, closed surface.  It also assigns a state
space $V(\Sigma_{g,n},C)$ to every oriented, closed surface with $n$
boundary circles, where $C$ is an object in $\cC$ interpreted as the
``color" of each boundary circle.  Each state space $V(\Sigma_{g,n},C)$
has a projective action of the mapping class group $\MCG_*(\Sigma_{g,n})$.
(In fact the unpointed mapping class group $\MCG(\Sigma_{g,n})$ acts, but
we will keep the basepoint for convenience.)  These mapping class group
actions then extend to invariants of 3-manifolds and links in 3-manifolds.

Finally, the UMTC $\cC$ is universal for quantum computation if the image of
the mapping class group action on suitable choices of $V(\Sigma_{g,n},C)$
is large enough to simulate quantum circuits on $m$ qubits, with $g,n =
O(m)$.  If the action is only large enough to simulate classical circuits
on $m$ bits, then it is still classically universal.  These universality
results are important for the fault-tolerance problem in quantum computation
\cite{FLW:universal,K:tvcodes}.

One early, important UMTC is the (truncated) category $\rep_q(\SU(2))$
of quantum representations of $\SU(2)$ at a principal root of unity.  This
category yields the Jones polynomial for a link $L \subseteq S^3$ (taking $C
= V_1$, the first irreducible object) and the Jones-Witten-Reshetikhin-Turaev
invariant of a closed 3-manifold.  In separate papers, Freedman, Larsen,
and Wang showed that $V(\Sigma_{0,n},V_1)$ and $V(\Sigma_{g,0})$ are
both quantumly universal representations of $\MCG_*(\Sigma_{0,n})$ and
$\MCG_*(\Sigma_{g,0})$ \cite{FLW:universal,FLW:two}.

Universality also implies that any approximation of these invariants
that could be useful for computational topology is $\shP$-hard
\cite{K:jones,AA:hardness}.  Note that exact evaluation of the Jones
polynomial was earlier shown to be $\shP$-hard without quantum computation
methods \cite{JVW:complexity}.

If $G$ is a finite group, then the invariant $\#H(M,G)$ for a 3-manifold
$M$ also comes from a UMTC, namely the categorical double $D(\rep(G))$
of $\rep(G)$, that was treated (and generalized) by Dijkgraaf and Witten
and others \cite{K:hopf,DW:group,FQ:finite}.  In this case, the state
space $V(\Sigma_{g,0})$ is the vector space $\C[\hR_g(G)/\Inn(G)]$, and
the action of $\MCG_*(\Sigma_{g,0})$ on $V(\Sigma_{g,0})$ is induced by
its action on $\hR_g(G)$.  Some of the objects in $D(\rep(G))$ are given by
conjugacy classes $C \subseteq G$, and the representation of the braid group
$\MCG_*(\Sigma_{0,n})$ with braid strands colored by a conjugacy class $C$
yields the invariant $\#H(S^3 \setminus L,G,C)$ considered by Krovi and
Russell.  Motivated by the fault tolerance problem, Ogburn and Preskill
\cite{OP:topological} found that the braid group action for $G = \Alt(5)$
is classically universal (with $C$ the conjugacy class of 3-cycles) and they
reported that Kitaev showed the same thing for $\Sym(5)$.   They also showed
if these actions are enhanced by quantum measurements in a natural sense,
then they become quantumly universal.  Later Mochon \cite{Mochon:finite}
extended this result to any non-solvable finite group $G$.  In particular,
he proved that the action of $\MCG_*(\Sigma_{0,n})$ is classically universal
for a suitably chosen conjugacy class $C$.

Mochon's result is evidence, but not proof, that $\#H(S^3 \setminus L,G,C)$
is $\shP$-complete for every fixed, non-solvable $G$ and every suitable
conjugacy class $C \subseteq G$ that satisfies his theorem.  His result
implies that if we constrain the associated braid group action with arbitrary
initialization and finalization conditions, then counting the number of
solutions to the constraints is parsimoniously $\shP$-complete.  However,
if we use a braid to describe a link, for instance with a plat presentation
\cite{K:jones}, then the description yields specific initialization and
finalizations conditions that must be handled algorithmically to obtain
hardness results.  Recall that in our proof of \Thm{th:main}, the state in
$\hR_g(G)$ is initialized and finalized using the handlebodies $(H_g)_I$
and $(H_g)_F$.  If we could choose any initialization and finalization
conditions whatsoever, then it would be much easier to establish (weakly
parsimonious) $\shP$-hardness; it would take little more work than to
cite \Thm{th:dt}.

\section{Complexity and algorithms}
\label{s:complexity}

\subsection{Complexity classes}
\label{ss:classes}

For background on the material in this section, and some of the treatment
in the next section as well, see Arora and Barak \cite{AB:modern} and the
Complexity Zoo \cite{zoo}.

Let $A$ be a finite \emph{alphabet} (a finite set with at least 2 elements)
whose elements are called \emph{symbols}, and let $A^*$ be the set of finite
words in $A$.   We can consider three kinds of computational problems with
input in $A^*$: decision problems $d$, counting problems $c$, and function
problems $f$, which have the respective forms
\begin{eq}{e:dcf} d:A^* \to \{\yes,\no\} \qquad c:A^* \to \N \qquad
    f:A^* \to A^*. \end{eq}
The output set of a decision problem can also be identified with the
Boolean alphabet
\[ A = \Z/2 = \{1,0\} \cong \{\text{true},\text{false}\}
    \cong \{\yes,\no\}. \]

A \emph{complexity class} $\ccC$ is any set of function, counting, or
decision problems, which may either be defined on all of $A^*$ or require
a promise.  A specific, interesting complexity class is typically defined as
the set of all problems that can be computed with particular computational
resources.  For instance, $\ccP$ is the complexity class of all decision
problems $d$ such that $d(x)$ can be computed in polynomial time (in the
length $|x|$ of the input $x$) by a deterministic Turing machine.  $\FP$
is the analogous class of function problems that are also computable in
polynomial time.

A \emph{promise problem} is a function $d$, $c$, or $f$ of the same
form as \eqref{e:dcf}, except whose domain can be an arbitrary subset
$S \subseteq A^*$.  The interpretation is that an algorithm to compute
a promise problem can accept any $x \in A^*$ as input, but its output is
only taken to be meaningful when it is promised that $x \in S$.

The input to a computational problem is typically a data type such as an
integer, a finite graph, a simplicial complex, etc.  If such a data type can
be encoded in $A^*$ in some standard way, and if different standard encodings
are interconvertible in $\FP$, then the encoding can be left unspecified.
For instance, the decision problem of whether a finite graph is connected is
easily seen to be in $\ccP$; the specific graph encoding is not important.
Similarly, there are various standard encodings of the non-negative integers
$\N$ in $A^*$.  Using any such encoding, we can also interpret $\FP$
as the class of counting problems that can be computed in polynomial time.

The complexity class $\NP$ is the set of all decision problems $d$ that can
be answered in polynomial time with the aid of a prover who wants to convince
the algorithm (or verifier) that the answer is ``yes".  In other words, every
$d \in \NP$ is given by a two-variable predicate $v \in \ccP$.  Given an input
$x$, the prover provides a witness $y$ whose length $|y|$ is some polynomial
in $|x|$.  Then the verifier computes $v(x,y)$, with the conclusion that
$d(x) = \yes$ if and only if there exists $y$ such that $v(x,y) = \yes$.
The witness $y$ is also called a \emph{proof} or \emph{certificate},
and the verification $v$ is also called a \emph{predicate}.  Likewise,
a function $c(x)$ is in $\shP$ when it is given by a predicate $v(x,y)$;
in this case $c(x)$ is the number of witnesses $y$ that satisfy $v(x,y)$.
For instance, whether a finite graph $G$ (encoded as $x$) has a 3-coloring
is in $\NP$, while the number of 3-colorings of $G$ is in $\shP$.  In both
cases, a 3-coloring of $G$ serves as a witness $y$.

A computational problem $f$ may be $\NP$-hard or $\shP$-hard with the
intuitive meaning that it is provably at least as difficult as any problem
in $\NP$ or $\shP$.   A more rigorous treatment leads to several different
standards of hardness.  One quite strict standard is that any problem $g$
in $\NP$ or $\shP$ can be reduced to the problem $f$ by converting the
input; \ie, there exists $h \in \FP$ such that
\[ g(x) = f(h(x)). \]
If $f, g \in \NP$, then this is called \emph{Karp reduction}; if $f, g \in
\shP$, then it is called \emph{parsimonious reduction}.  Evidently, if a
counting problem $c$ is parsimoniously $\shP$-hard, then the corresponding
existence problem $d$ is Karp $\NP$-hard.

When a problem $f$ is $\shP$-hard by some more relaxed standard than
parsimonious reduction, there could still be an algorithm to obtain
some partial information about the value $f$, such as a congruence or an
approximation, even if the exact value is intractible.   For instance,
the permanent of an integer matrix is well-known to be $\shP$-hard
\cite{Valiant:permanent}, but its parity is the same as that of the
determinant, which can be computed in polynomial time.  However, when
a counting problem $c$ is parsimoniously $\shP$-hard, then the standard
conjecture that $\NP \not\subseteq \BPP$ implies that it is intractible to
obtain any partial information about $c$.  Here $\BPP$ is the set of problems
solvable in randomized polynomial time with a probably correct answer.

\begin{theorem}[Corollary of Valiant-Vazirani \cite{VV:unique}]  Let $c$
be a parsimoniously $\shP$-hard problem, and let $b > a \ge 0$ be distinct,
positive integers.  Then it is $\NP$-hard to distinguish $c(x) = a$ from
$c(x) = b$ via a Cook reduction in $\BPP$, given the promise that $c(x)
\in \{a,b\}$.
\label{th:vv} \end{theorem}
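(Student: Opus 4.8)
The plan is to give a randomized polynomial-time Turing (Cook) reduction from $\SAT$ to the promise problem $D$ with YES-instances $\{x : c(x) = b\}$, NO-instances $\{x : c(x) = a\}$, and promise $c(x) \in \{a,b\}$. Since $\SAT$ is $\NP$-complete under Karp reductions and $\BPP$ Cook reductions compose (with Karp reductions as a special case), this will show $D$ is $\NP$-hard under $\BPP$ Cook reductions, which is the assertion. Two ingredients feed the reduction. First, because $c$ is parsimoniously $\shP$-hard and $\#\SAT \in \shP$ with the Cook--Levin reduction being parsimonious, $\#\SAT$ is parsimoniously $\shP$-complete, so there is $h \in \FP$ with $c(h(F)) = \#\SAT(F)$ for every CNF formula $F$. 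Second, I would invoke the Valiant--Vazirani isolation lemma in the form: there is a probabilistic polynomial-time algorithm $V$ sending a CNF formula $\phi$ on $n$ variables to a CNF formula $\psi = V(\phi)$ such that, if $\phi$ is unsatisfiable then $\psi$ is unsatisfiable with certainty, while if $\phi$ is satisfiable then $\psi$ has a unique satisfying assignment with probability at least $1/\mathrm{poly}(n)$ (on satisfiable inputs $\psi$ may instead have zero or several satisfying assignments, which is the point that forces extra care).

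Next I would pad in order to move the relevant solution counts from $\{0,1\}$ to $\{a,b\}$. Since $a$ and $b$ are fixed constants, hard-wire constant-size CNF formulas $G_a$ with exactly $a$ satisfying assignments (a ``value $< a$'' comparison gadget on $\lceil \log_2 a \rceil$ fresh bits, or an unsatisfiable formula when $a = 0$) and $G_{b-a}$ with exactly $b-a$ satisfying assignments. Given $\psi$, first form the conjunction of $\psi$ with a variable-disjoint copy of $G_{b-a}$; this has $k(b-a)$ satisfying assignments, where $k = \#\SAT(\psi)$. Then combine the result with a variable-disjoint copy of $G_a$ by the standard disjoint-union-via-selector construction (one new variable that selects one of the two subformulas and pins all variables of the other to false), obtaining a CNF formula $\psi''$ with $\#\SAT(\psi'') = a + k(b-a)$. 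Hence $\#\SAT(\psi'') = a$ if $k = 0$, equals $b$ if $k = 1$, and is at least $2b - a > b$ if $k \ge 2$; in particular $c(h(\psi'')) = a + k(b-a) \in \{a,b\}$ exactly when $k \le 1$, that is, exactly when $h(\psi'')$ satisfies the promise of $D$.

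The reduction is then: on input $\phi$ of length $n$, run $V$ independently $t = \mathrm{poly}(n)$ times to get $\psi_1, \dots, \psi_t$, set $x_i = h(\psi_i'')$, query the $D$-oracle on each $x_i$, and accept iff at least one answer is ``$c(x_i) = b$''. I expect the main obstacle to be precisely the analysis of untrustworthy, out-of-promise queries, and this is also why an OR-style Cook reduction is used rather than a many-one reduction. If $\phi$ is unsatisfiable, every $\psi_i$ is unsatisfiable, so every $x_i$ has $c(x_i) = a$ and lies in the promise; the oracle answers ``$c(x_i) = a$'' everywhere and we reject, with zero error. If $\phi$ is satisfiable, then for a suitable polynomial $t$ some $\psi_i$ has a unique satisfying assignment with probability at least $1 - (1 - 1/\mathrm{poly}(n))^t \ge 1 - 2^{-n}$; for that $i$, $c(x_i) = b$ and $x_i$ is in the promise, so the oracle answers ``$c(x_i) = b$'' and we accept. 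The only other case is some $x_j$ with $\#\SAT(\psi_j) \ge 2$, which violates the promise and on which the oracle may answer arbitrarily; but a spurious ``$b$'' there only reinforces the correct (accepting) verdict in the satisfiable case, and no query is ever out-of-promise in the unsatisfiable case, so such behavior can never cause a wrong acceptance. Thus the reduction errs only on satisfiable instances and only with exponentially small probability, hence is a valid $\BPP$ Cook reduction, which completes the proof.
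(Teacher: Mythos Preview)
Your proof is correct and follows essentially the same route as the paper: invoke Valiant--Vazirani isolation to reduce $\SAT$ to a promise problem with solution count in $\{0,1\}$, pad the count affinely to land in $\{a,b\}$, and then use the parsimonious $\shP$-hardness of $c$ to transport this to an instance of the promise problem for $c$. Your write-up is in fact more careful than the paper's on two points: you make the parsimonious reduction $h$ from $\#\SAT$ to $c$ explicit, and you give a clean one-sided analysis of out-of-promise oracle queries (they occur only on satisfiable inputs, where a spurious ``$b$'' answer cannot cause error), whereas the paper leaves this implicit in the phrase ``this happens rarely.''
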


When we say that an algorithm $\cA$ obtains partial information about
the value of $c(x)$, we mean that it can calculate $f(c(x))$ for some
non-constant function $f$.  Thus it can distinguish some pair of cases
$c(x) = a$ and $c(x) = b$; and by \Thm{th:vv}, this is $\NP$-hard.  Here a
\emph{Cook reduction} is a polynomial-time algorithm $\cB$ (in this case
randomized polynomial time) that can call $\cA$ as a subroutine.

\begin{proof} Given a problem $d \in \NP$, Valiant and Vazirani construct
a randomized algorithm $\cB$ that calculates $d(x)$ using a collection of
predicates $v_1(x,y)$ in $\ccP$ that usually have at most one solution in $y$.
Thus, if an algorithm $\cA$ can solve each problem
\[ d_1(x) = \exists? y \text{\ such that\ } v_1(x,y) = \yes \]
under the promise that at most one $y$ exists, then $\cA$ can be used as
a subroutine to compute the original $d$.  Such a predicate $v_1(x,y)$
may occasionally have more than one solution, but this happens rarely and
still allows $\cB$ to calculate $d$ by the standard that its output only
needs to be probably correct.

Given such a predicate $v_1(x,y)$, it is easy to construct another
predicate $v_2(x,y)$ in $\ccP$ that has $b-a$ solutions in $y$ for each
solution to $v_1(x,y)$, and that has $a$ other solutions in $y$ regardless.
Thus $v_2(x,y)$ has $b$ solutions when $d_1(x) = \yes$ and $a$ solutions
when $d_1(x) = \no$.  Thus, an algorithm $\cA$ that can distinguish
$c(x) = a$ from $c(x) = b$ can be used to calculate $d_1(x)$, and by the
Valiant-Vazirani construction can be used to calculate $d(x)$.
\end{proof}

A decision problem $d$ which is both in $\NP$ and $\NP$-hard is called
\emph{$\NP$-complete}, while a counting problem which is both in $\shP$
and $\shP$-hard is called \emph{$\shP$-complete}.   For instance the
decision problem $\CSAT$, circuit satisfiability over an alphabet $A$, is
Karp $\NP$-complete, while the counting version $\shCSAT$ is parsimoniously
$\shP$-complete (\Thm{th:csat}).  Thus, we can prove that any other problem
is $\NP$-hard by reducing $\CSAT$ to it, or $\shP$-hard by reducing $\shCSAT$
to it.

We mention three variations of parsimonious reduction.   A counting function
$c$ is \emph{weakly parsimoniously} $\shP$-hard if for every $b \in \shP$,
there are $f,g \in \FP$ such that
\[ b(x) = f(c(g(x)),x). \]
The function $c$ is \emph{almost parsimoniously} $\shP$-hard if $f$ does
not depend on $x$, only on $c(g(x))$.  In either case, we can also ask for
$f(c,x)$ to be 1-to-1 on the set of values of $c$ with $f^{-1} \in \FP$,
linear or affine linear in $c$, etc.  So, \Thm{th:main} says that $\#H(M,G)$
is almost parsimoniously $\shP$-complete.

Finally, suppose that $c(x)$ counts the number of solutions to $v(x,y)$
and $b(x)$ counts the number of solutions to $u(x,y)$.  Then a \emph{Levin
reduction} is a map $h \in \FP$ and a bijection $f$ with $f,f^{-1} \in \FP$
such that
\[ u(x,y) = v(h(x),f(y)). \]
Obviously Levin reduction implies parsimonious reduction.

\subsection{Circuits}
\label{ss:circuits}

Given an alphabet $A$, a \emph{gate} is a function $\alpha:A^k \to A^\ell$.
A \emph{gate set} $\Gamma$ is a finite set of gates, possibly with
varying sizes of domain and target, and a \emph{circuit} over $\Gamma$ is a
composition of gates in $\Gamma$ in the pattern of a directed, acyclic graph.
A gate set $\Gamma$ is \emph{universal} if every function $f:A^n \to A^m$
has a circuit.  For example, if $A = \Z/2$, then the gate set
\[ \Gamma =\{\AND, \OR, \NOT, \COPY\} \]
is universal, where $\AND$, $\OR$, and $\NOT$ are the standard Boolean
operations and the $\COPY$ gate is the diagonal embedding $a \mapsto (a,a)$.

Let $A$ be an alphabet with a universal gate set $\Gamma$, and suppose that
$A$ has a distinguished symbol $\yes \in A$.  Choose a standard algorithm
to convert an input string $x \in A^*$ to a circuit $C_x$ with one output.
Then the \emph{circuit satisfiability problem} $\CSAT_{A,\Gamma}(x)$ asks
whether the circuit $C_x$ has an input $y$ such that $C_x(y) = \yes$.
It is not hard to construct a Levin reduction of $\CSAT_{A,\Gamma}$ from
any one alphabet and gate set to any other, so we can just call any such
problem $\CSAT$.  $\CSAT$ also has an obvious counting version $\shCSAT$.

\begin{theorem}[Cook-Levin-Karp] $\CSAT$ is Karp $\NP$-complete and $\shCSAT$
is parsimoniously $\shP$-complete.
\label{th:csat} \end{theorem}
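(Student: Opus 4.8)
The plan is to prove Theorem~\ref{th:csat} (the Cook--Levin--Karp theorem) in two halves: membership and hardness. Membership is the easy direction. To see that $\CSAT \in \NP$, observe that given a circuit $C_x$ over a fixed finite gate set $\Gamma$, a purported satisfying assignment $y \in A^*$ is a witness of polynomial length, and evaluating $C_x(y)$ gate-by-gate along the topological order of the underlying DAG takes polynomial time; the predicate $v(x,y) = [C_x(y) = \yes]$ is thus in $\ccP$. The same predicate witnesses $\shCSAT \in \shP$, since by definition $\shCSAT(x)$ counts exactly the $y$ with $v(x,y) = \yes$. I would state this in a sentence or two and move on.

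For hardness, I would run the standard tableau argument. Let $d \in \NP$ be arbitrary, given by a polynomial-time verifier $v(x,y)$ with $|y| \le p(|x|)$ for a polynomial $p$. Fix a deterministic Turing machine $N$ computing $v$ in time $q(n)$ for a polynomial $q$. On input $x$, consider the $q(|x|) \times q(|x|)$ computation tableau of $N$ running on $(x,y)$: row $t$ encodes the full configuration (tape contents, head position, state) at time $t$. The key local-checkability observation is that whether the tableau is a valid, accepting computation is the conjunction of $O(q^2)$ constraints, each depending on only a constant-size window of cells (a cell and its neighbors in the row above), because a Turing machine step is local. Each such window constraint is a Boolean function on a constant number of symbols, hence computable by a constant-size sub-circuit over the fixed universal gate set $\Gamma$ (using $\COPY$ to fan out shared cells). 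Wiring these together, together with the hard-coded bits of $x$ in the first row and the free input bits for $y$, produces in polynomial time a circuit $C'$ whose inputs are (the encoding of) $y$ together with the nondeterministic tableau cells, and whose single output is $\yes$ exactly when the tableau is a valid accepting run of $N$ on $(x,y)$. Finally I would fold the tableau cells into the circuit: since the machine is deterministic, every intermediate cell is a deterministic function of the first row, so one can either build the tableau cells internally from $y$ using $\Gamma$-sub-circuits (recovering a circuit $C_x$ in the $y$ variables alone) or, more simply, keep them as auxiliary inputs and note that for each $y$ there is exactly one consistent setting of the auxiliary inputs that can make the output $\yes$. Either way the map $x \mapsto C_x$ is computable in polynomial time, and $d(x) = \yes$ iff $C_x$ is satisfiable, giving a Karp reduction; and the one-consistent-completion property makes it parsimonious, so $\shCSAT$ is parsimoniously $\shP$-hard and in fact this is a Levin reduction via the bijection $y \leftrightarrow (y, \text{its forced tableau})$. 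Combined with membership, $\CSAT$ is Karp $\NP$-complete and $\shCSAT$ is parsimoniously $\shP$-complete.

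The main obstacle, and the only place requiring care, is the parsimony bookkeeping: a naive encoding of the tableau as extra circuit inputs destroys parsimony, since a satisfying assignment of $C_x$ would carry both $y$ \emph{and} the tableau, and one must argue the tableau part is uniquely determined. The clean fix is to insist $N$ is deterministic (which we may, since $v \in \ccP$), so that the first row determines the whole tableau; then the sub-circuits computing successive rows can be cascaded so that $C_x$ genuinely has only the bits of $y$ as inputs, and $\#\{y : C_x(y) = \yes\} = \#\{y : v(x,y) = \yes\}$ on the nose. A secondary routine point is that the alphabet and gate set in the statement are arbitrary: one first carries out the reduction for the Boolean alphabet $A = \Z/2$ with $\Gamma = \{\AND,\OR,\NOT,\COPY\}$, then invokes the already-noted Levin interreducibility of $\CSAT$ across alphabets and universal gate sets to land in the stated $\CSAT_{A,\Gamma}$. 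Neither point is deep, but stating them is what separates a correct proof of the parsimonious/Levin strengthening from the textbook Karp-only version.
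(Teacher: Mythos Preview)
Your argument is correct and is the standard Cook--Levin tableau proof, with appropriate attention to parsimony via determinism of the verifier. However, the paper does not give its own proof of this theorem at all: immediately after the statement it simply writes ``(See Arora--Barak [Sec.~6.1.2 \& Thm.~17.10] for a proof of \Thm{th:csat}.)'' and moves on. So there is nothing to compare against beyond noting that your writeup is essentially what one finds in the cited reference.
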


(See Arora-Barak \cite[Sec. 6.1.2 \& Thm. 17.10]{AB:modern} for a proof of
\Thm{th:csat}.)

We will need two variations of the circuit model that still satisfy
\Thm{th:csat}:  Reversible circuits and planar circuits.

A \emph{reversible circuit} \cite{FT:logic} is a circuit $C$ in which
every gate $\alpha:A^k \to A^k$ in the gate set $\Gamma$ is a bijection;
thus the evaluation of $C$ is also a bijection.  We say that $\Gamma$ is
\emph{reversibly universal} if for any sufficiently large $n$, the gates of
$\Gamma$ in different positions generate either $\Alt(A^n)$ or $\Sym(A^n)$.
(If $|A|$ is even, then we cannot generate any odd permutations when $n$
is larger than the size of any one gate in $\Gamma$.)

\begin{figure}[htb] \begin{center}
\begin{tikzpicture}[scale=.7,semithick,decoration={markings,
    mark=at position 0.08 with {\arrow{angle 90}},
    mark=at position 0.94 with {\arrow{angle 90}}}]
\draw (0,0) node[anchor=east] {$x_5$};
\draw[postaction={decorate}] (0,0) -- (7,0);
\draw (7,0) node[anchor=west] {$y_5$};
\draw (0,1) node[anchor=east] {$x_4$};
\draw[postaction={decorate}] (0,1) -- (7,1);
\draw (7,1) node[anchor=west] {$y_4$};
\draw (0,2) node[anchor=east] {$x_3$};
\draw[postaction={decorate}] (0,2) -- (7,2);
\draw (7,2) node[anchor=west] {$y_3$};
\draw (0,3) node[anchor=east] {$x_2$};
\draw[postaction={decorate}] (0,3) -- (7,3);
\draw (7,3) node[anchor=west] {$y_2$};
\draw (0,4) node[anchor=east] {$x_1$};
\draw[postaction={decorate}] (0,4) -- (7,4);
\draw (7,4) node[anchor=west] {$y_1$};
\draw[fill=white] (1.1,-.35) rectangle (1.9,1.35);
\draw (1.5,.5) node {$\alpha_2$};
\draw[fill=white] (1.1,2.65) rectangle (1.9,4.35);
\draw (1.5,3.5) node {$\alpha_1$};
\draw[fill=white] (3.1,.65) rectangle (3.9,2.35);
\draw (3.5,1.5) node {$\alpha_3$};
\draw[fill=white] (5.1,1.65) rectangle (5.9,3.35);
\draw (5.5,2.5) node {$\alpha_4$};
\draw (3.5,-.5) node[anchor=north] {$C(x) = y$};
\end{tikzpicture} \end{center}
\caption{A planar, reversible circuit.}
\label{f:planar} \end{figure}
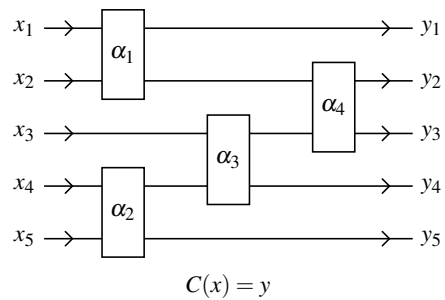

A circuit $C$ is \emph{planar} if its graph is a planar graph placed in a
rectangle in the plane, with the inputs on one edge and the output on an
opposite edge.  The definition of a universal gate for general circuits
carries over to planar circuits; likewise the definition for reversible
circuits carries over to reversible planar circuits.  (See \Fig{f:planar}.)
We can make a circuit or a reversible circuit planar using reversible
$\SWAP$ gates that take $(a,b)$ to $(b,a)$.  Likewise, any universal gate
set becomes planar-universal by adding the $\SWAP$ gate.  Thus, the planar
circuit model is equivalent to the general circuit model.

The reduction from general circuits to reversible circuits is more
complicated.

\begin{lemma} Let $A$ be an alphabet for reversible circuits.
\begin{enumerate}
\item[1.] If $|A| \ge 3$, then $\Gamma = \Alt(A^2)$ is a universal set of
binary gates.
\item[2.] If $|A| = 2$, then $\Gamma = \Alt(A^3)$ is a universal set
of ternary gates.
\item[3.] If $|A|$ is even, then $\Sym(A^n) \subseteq \Alt(A^{n+1})$.
\end{enumerate}
\label{l:rev} \end{lemma}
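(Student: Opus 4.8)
The plan is to prove the three parts of Lemma~\ref{l:rev} essentially independently, since they correspond to three different combinatorial facts about symmetric and alternating groups acting on product sets. Throughout, ``universal set of binary (or ternary) gates'' means: for all sufficiently large $n$, the copies of the gates $\Gamma$ placed on all possible pairs (resp.\ triples) of the $n$ coordinates of $A^n$ generate $\Alt(A^n)$ (and also $\Sym(A^n)$ when $|A|$ is odd). So the content of parts 1 and 2 is that placing the \emph{full} alternating group $\Alt(A^2)$ (resp.\ $\Alt(A^3)$) on all coordinate pairs (resp.\ triples) generates all of $\Alt(A^n)$. Part 3 is a self-contained parity trick.

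\textbf{Part 1.} Let $G_n \le \Sym(A^n)$ be the group generated by all the coordinate-pair copies of $\Alt(A^2)$. First I would check the base case: for $n=2$ there is nothing to prove, and the key step is to get from $n$ to $n+1$. Fix the first coordinate to a value $a \in A$; the subgroup of $G_{n+1}$ coming from pairs among coordinates $2,\dots,n+1$ acts as (a diagonal copy of) $G_n = \Alt(A^n)$ on each ``slice'' $\{a\}\times A^n$, but the same generators act simultaneously and identically on all $|A|$ slices, so by themselves they only give the diagonal $\Alt(A^n)$ inside $\Sym(A^{n+1})$. To break the diagonal we use the pair-gates involving coordinate $1$: a gate in $\Alt(A^2)$ acting on coordinates $(1,2)$ can, for instance, perform a permutation of the second coordinate that depends on the first, i.e.\ act differently on different slices. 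The heart of the argument is to show that combining ``same action on all slices'' with ``first-coordinate-controlled action'' generates everything in $\Alt(A^{n+1})$. The cleanest route is a primitivity/Iwasawa-style argument: show $G_{n+1}$ is transitive on $A^{n+1}$, that the point stabilizer is large (it contains a conjugate of $\Alt(A^n)$ by the inductive hypothesis), argue primitivity (a block system would have to be respected by all pair-gates, which forces it to be trivial since $\Alt(A^2)$ is primitive on $A^2$ when $|A|\ge 3$), and then invoke that a primitive group containing a $3$-cycle (available from $\Alt(A^2)$ placed on two coordinates, since $|A^2| \ge 4 > 3$) must be $\Alt$ or $\Sym$ of the whole set; finally a parity count (each generator is even, being a product of things in $\Alt$) pins it to $\Alt(A^{n+1})$.

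\textbf{Part 2.} Here $|A| = 2$, so $\Alt(A^2)$ has order $12$ and is \emph{not} primitive enough on its own (indeed for $|A|=2$ the group generated by coordinate-pair gates does not reach $\Alt(A^n)$, which is exactly why we need ternary gates). The argument is structurally the same as Part~1 but with triples: let $G_n \le \Sym(A^n) = \Sym(\{0,1\}^n)$ be generated by all coordinate-triple copies of $\Alt(A^3)$, induct on $n$, and at the inductive step use a triple-gate on coordinates $(1,2,3)$ to provide a first-coordinate-controlled member of $\Alt(A^{\{2,3\}})$, combined with the diagonal $\Alt(A^n)$ from triples inside $\{2,\dots,n+1\}$. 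Then run the same transitivity + primitivity + contains-a-$3$-cycle + parity argument. The place to be careful is the base of the induction and checking $\Alt(A^3) = \Alt(8)$ is genuinely primitive on $\{0,1\}^3$ with point stabilizer $\Alt(7)$ — this is where $|A^3| = 8 \ge 5$ is used so that $\Alt(A^3)$ really is simple/primitive; by contrast $\Alt(A^2) = \Alt(4)$ is not simple, which is the whole reason $n=2$ coordinates do not suffice when $|A|=2$.

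\textbf{Part 3.} This is a short parity argument independent of the above. When $|A|$ is even, $|A^n|$ is even, and for any permutation $\sigma \in \Sym(A^n)$ I want to realize $\sigma$, viewed as a permutation of $A^{n+1} = A^n \times A$ that fixes the last coordinate (acting as $\sigma$ on the first factor regardless of the last coordinate's value), as an element of $\Alt(A^{n+1})$. The point is that this ``stacked'' permutation is $\sigma$ applied in $|A|$ parallel copies, so its sign is $\mathrm{sgn}(\sigma)^{|A|} = +1$ since $|A|$ is even. Hence the natural embedding $\Sym(A^n) \hookrightarrow \Sym(A^{n+1})$ (act on the first $n$ coordinates, ignore the last) lands inside $\Alt(A^{n+1})$, which is the claimed inclusion. \textbf{The main obstacle} I expect is Part~1 (and its twin Part~2): making the ``diagonal plus a controlled twist generates everything'' step fully rigorous. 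I would lean on the classical result that a primitive permutation group containing a $3$-cycle is the full alternating group (Jordan's theorem) rather than trying to build all of $\Alt(A^{n+1})$ by hand, so the real work is verifying transitivity and primitivity of $G_{n+1}$ — transitivity is easy by induction, and primitivity reduces to the observation that $\Alt(A^2)$ (resp.\ $\Alt(A^3)$) is primitive on $A^2$ (resp.\ $A^3$) under the stated size hypotheses, so no nontrivial coordinate-compatible block system can survive.
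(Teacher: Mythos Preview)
Your Part~3 is correct and is precisely the elementary parity observation the paper has in mind.

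Parts~1 and~2 contain a genuine gap at the key step. You want to apply Jordan's theorem to a $3$-cycle ``available from $\Alt(A^2)$ placed on two coordinates.'' But a $3$-cycle in $\Alt(A^2)$, when that copy of $\Alt(A^2)$ is placed on coordinates $(i,j)$ of $A^{n+1}$ with $n+1 \ge 3$, is \emph{not} a $3$-cycle of $A^{n+1}$: it acts identically on each of the $|A|^{n-1}$ fibers over the remaining coordinates, hence is a product of $|A|^{n-1}$ disjoint $3$-cycles. The same happens with the inductively obtained diagonal copy of $\Alt(A^n)$: a $3$-cycle there becomes $|A|$ disjoint $3$-cycles in $A^{n+1}$. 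Jordan's theorem does not apply to such elements, and primitive groups other than $\Alt$ or $\Sym$ can certainly contain them. Your primitivity sketch is also incomplete---there is no reason a block system for $G_{n+1}$ must be ``coordinate-compatible'' a priori---but that is secondary to the $3$-cycle problem.

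The paper takes a different route that sidesteps both issues. From the two inductive copies $\Alt(A^{n-1})_L$ and $\Alt(A^{n-1})_R$ (on the leftmost and rightmost $n-1$ symbols respectively), it extracts the controlled subgroups $\Alt(A^{n-2})^{|A|}_L$ and $\Alt(A^{n-2})^{|A|}_R$, in which the permutation of the middle $n-2$ symbols depends on the value of the leftmost (resp.\ rightmost) symbol. A commutator of a left-controlled element supported only at coordinate-$1$ value $a$ with a right-controlled element supported only at coordinate-$n$ value $b$ lives entirely on the single slice $\{a\}\times A^{n-2}\times\{b\}$; varying the inputs, these commutators generate $\Alt(\{a\}\times A^{n-2}\times\{b\})$. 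Doing this for all $(a,b)$ and all choices of the two outer positions gives many overlapping alternating subgroups on subsets of size $|A^{n-2}|\ge 3$ covering $A^n$, and \Lem{l:alt} then yields $\Alt(A^n)$. In effect the commutator trick is exactly what manufactures honest small-support elements that your gate-level generators never provide directly; if you want to rescue the Jordan approach, you would need either this same trick to build a genuine $3$-cycle, or an appeal to much deeper minimal-degree bounds for primitive groups.
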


Different versions of \Lem{l:rev} are standard in the reversible
circuit literature.  For instance, when $A = \Z/2$, the foundational
paper \cite{FT:logic} defines the Fredkin gate and the Toffoli gate,
each of which is universal together with the $\NOT$ gate.  Nonetheless,
we did not find a proof for all values of $|A|$, so we give one.

\begin{proof} Case 3 of the lemma is elementary, so we
concentrate on cases 1 and 2.  We will show by induction on $n$ that $\Gamma$
generates $\Alt(A^n)$.  The hypothesis hands us the base of induction $n=3$
when $|A| = 2$ and $n=2$ when $|A| \ge 3$.  So, we assume a larger value
of $n$ and we assume by induction that the case $n-1$ is already proven.

We consider the two subgroups in $\Alt(A^n)$ that are given by
$\Gamma$-circuits that act respectively on the left $n-1$ symbols or
the right $n-1$ symbols.  By induction, both subgroups are isomorphic to
$\Alt(A^{n-1})$, and we call them $\Alt(A^{n-1})_L$ $\Alt(A^{n-1})_R$.  They
in turn have subgroups $\Alt(A^{n-2})^{|A|}_L$ and $\Alt(A^{n-2})^{|A|}_R$
which are each isomorphic to $\Alt(A^{n-2})^{|A|}$ and each act on the
middle $n-2$ symbols; but in one case the choice of permutation $\alpha
\in \Alt(A^{n-2})$ depends on the leftmost symbol, while in the other case
it depends on the rightmost symbol.  By taking commutators between these
two subgroups, we obtain all permutations in $\Alt(\{a\} \times A^{n-2}
\times \{b\})$ for every pair of symbols $(a,b)$.  Moreover, we can repeat
this construction for every subset of $n-2$ symbols.  Since $n \ge 3$,
and since $n \ge 4$ when $|A| = 2$, we know that $|A^{n-2}| \ge 3$.
We can thus apply \Lem{l:alt} in the next section to the alternating
subgroups that we have obtained.
\end{proof}

\Lem{l:rev} motivates the definition of a canonical reversible gate set
$\Gamma$ for each alphabet $A$.  (Canonical in the sense that it is both
universal and constructed intrinsically from the finite set $A$.)  If $|A|$
is odd, then we let $\Gamma = \Alt(A^2)$.  If $|A| \ge 4$ is even, then
we let $\Gamma = \Sym(A^2)$.  Finally, if $|A| = 2$, then we let $\Gamma
= \Sym(A^3)$.  By \Lem{l:rev}, each of these gate sets is universal.
Moreover, each of these gate sets can be generated by any universal gate
set, possibly with the aid of an ancilla in the even case.

In one version of reversible circuit satisfiability, we choose two subsets
$I,F \subseteq A$, interpreted as initialization and finalization alphabets.
We consider the problem $\RSAT_{A,I,F}(x)$ in which $x$ represents a
reversible circuit $C_x$ of some width $n$ with some universal gate set
$\Gamma$, and a witness $y$ is an input $y \in I^n$ such that $C_x(y)
\in F^n$.

\begin{theorem} Consider $A$, $I$, $F$, and $\Gamma$ with $\Gamma$ universal
and $2 \le |I|, |F| < |A|$.  Then $\RSAT_{A,I,F}$ is Karp $\NP$-hard
and $\shRSAT_{A,I,F}$ is parsimoniously $\shP$-hard.
\label{th:rsat} \end{theorem}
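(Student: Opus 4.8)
The plan is to reduce $\CSAT$ to $\RSAT_{A,I,F}$ (and $\shCSAT$ to $\shRSAT_{A,I,F}$) in a way that is Karp (resp.\ parsimonious), using \Thm{th:csat} as the starting point. The core of the argument is the standard idea of making an irreversible circuit reversible by carrying along ``garbage'' registers and then using the Bennett trick to un-compute the garbage, adapted to the constraints $2 \le |I|, |F| < |A|$. First I would fix a convenient small sub-alphabet: pick two symbols $0,1 \in I$ to serve as genuine input bits, pick $\yes, \no \in F$ as the finalization values of the designated output wire, and reserve at least one symbol $* \in A \setminus I$ (which exists since $|I| < |A|$) to use for initializing ancilla wires that the witness is \emph{not} allowed to touch. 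The asymmetry $I \subsetneq A$ is exactly what lets us force ancillas to start in a known state: a wire initialized to $*$ cannot be part of a valid witness input in $I^n$, yet the circuit may freely overwrite it.

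The key steps, in order: (1) Take the Boolean circuit $C_x$ produced from $x$ by \Thm{th:csat}, embed $\{0,1\}$ into $A$, and realize each Boolean gate ($\AND$, $\OR$, $\NOT$, $\COPY$) by a reversible gate acting on the relevant wires plus fresh ancilla wires initialized to $*$ (or to $0$), producing the intended output bit alongside garbage. By \Lem{l:rev}, the canonical reversible gate set for $A$ is universal, so each such reversible gate has a $\Gamma$-circuit; string these together to get a reversible circuit $C'$ whose width $n$ is polynomial in $|x|$. (2) Apply the Bennett ``compute--copy--uncompute'' construction: run $C'$, $\COPY$ the designated output wire onto a fresh output ancilla, then run $(C')^{-1}$ to restore all working ancillas to their initial values. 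Now the only wire whose final value depends on the input is the copied output wire; all other wires return to their startup symbols. (3) Set up the finalization constraint $C''(y) \in F^n$: the output ancilla must land in $F$ and we want it to equal $\yes$ precisely when $C_x$ accepts; the input wires, having been restored, must also lie in $F$, so we choose the genuine-input sub-alphabet $\{0,1\}$ to lie in $I \cap F$ and the ancilla startup symbols to lie in $F$ as well (again possible since we only need $|F| \ge 2$ and we get to choose which two). A witness $y \in I^n$ is then forced to have $*$ on every ancilla wire (anything else either fails to finalize or corresponds to no valid computation) and a genuine bit string on the input wires; valid witnesses thus biject with accepting inputs of $C_x$. (4) Count: this bijection is computable in $\FP$ in both directions, giving a Levin reduction, hence parsimonious for $\shRSAT$ and Karp for $\RSAT$.

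The main obstacle I expect is bookkeeping the alphabet constraints simultaneously rather than any deep idea. We must choose the distinguished symbols so that: genuine inputs lie in $I \cap F$; ancilla startup symbols lie in $F$ (so restored ancillas finalize) but \emph{not} all in $I$ in a way that would let a cheating witness exploit them; the output ancilla's startup symbol differs from $\yes$ or is at least controllable; and $\yes \in F$. With only the guarantees $2 \le |I|,|F| < |A|$ this is a small but genuine puzzle — in the worst case $|I| = |F| = 2$ and $|A| = 3$, so $I$ and $F$ each omit exactly one symbol and might omit different ones. I would handle the tight case by taking the ancilla startup symbol to be the unique element of $A \setminus I$ when that also lies in $F$, and otherwise permuting roles (swap which of $0,1$ plays $\yes$ versus $\no$, or conjugate the whole circuit by a fixed symbol permutation) so that the needed containments hold; since $A$, $I$, $F$ are fixed constants, any such case analysis is absorbed into the $\FP$ reduction. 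A secondary point needing a line of care is that when $|A|$ is even the canonical reversible gate set is $\Sym(A^2)$ only up to an ancilla, per \Lem{l:rev}(3), so one extra wire (initialized to a fixed symbol in $I \cap F$) may be needed to realize odd permutations; this does not affect the witness count.
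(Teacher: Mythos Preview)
Your high-level plan (dilate the Boolean gates, then apply Bennett's compute--copy--uncompute) matches Steps~1--2 of the paper's proof. The gap is in how you interface with the target model. In $\RSAT_{A,I,F}$ \emph{every} wire is witness-controlled: the prover supplies all of $y \in I^n$, and the reduction has no mechanism for ``initializing ancilla wires'' to a fixed $* \in A \setminus I$ outside the prover's reach. Your sentence ``a wire initialized to $*$ cannot be part of a valid witness input in $I^n$, yet the circuit may freely overwrite it'' describes a feature the model does not have; a reversible gate cannot set a wire to a constant, and the prover is confined to $I$.

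With that corrected, your Step~(3) breaks down. After compute--copy--uncompute, each non-output wire is restored to whatever the prover put there, so finalization on those wires is simply ``prover's symbol lies in $I \cap F$'', the same condition for data wires as for ancilla wires. You need at least two allowed values on data wires and exactly one forced value on ancilla wires, which a single set $I \cap F$ cannot provide; and when $I$ and $F$ are disjoint (permitted by the hypotheses) nothing finalizes at all. Per-wire post-processing permutations do not rescue this in general either: mapping one designated $I$-value into $F$ and the rest of $I$ outside $F$ needs $|I|+|F| \le |A|+1$, which can fail, and the output-ancilla wire has the additional problem that its final value conflates the prover's starting symbol with the computed answer bit. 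The paper's fix is a block encoding that your proposal lacks: first pair each data bit with its ancilla into a single symbol over $(\Z/2)^2$ with $I_1=F_1=\{(0,0),(1,0)\}$, so that ``ancilla $=0$'' \emph{is} the initialization constraint, and then embed into the given $(A,I,F)$ by grouping $k$ symbols of $A$ into one, with $k$ large enough that $|A|^k \ge |I|^k + |F|^k + 2$.
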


\Thm{th:rsat} is also a standard result in reversible circuit theory, but
again we give a proof because we did not find one.  (Note that if either
$I$ or $F$ only has one element or is all of $A$, then $\RSAT_{A,I,F}$
is trivial.)

\begin{proof} We consider a sequence $\RSAT_i$ of versions of the reversible
circuit problem.  We describe the satisfiability version for each one, and
implicitly define the counting version $\shRSAT_i$ using the same predicate.

\begin{itemize}
\item $\RSAT_1$ uses the binary alphabet $A = \Z/2$ and does not have 
$I$ or $F$.  Instead, some of the input bits are set to $0$ while others
are variable, and the decision output of a circuit is simply the value
of the first bit.

\item $\RSAT_2$ also has $A = \Z/2$ with an even number of input and
output bits.  Half of the input bits and output bits are set to $0$,
while the others are variable.  A circuit $C$ is satisfied by finding an
input/output pair $x$ and $C(x)$ that satisfy the constraints.
 
\item $\RSAT_3$ is $\RSAT_{A,I,F}$ with $I$ and $F$ disjoint and $|A
\setminus (I \cup F)| \ge 2$.

\item $\RSAT_4$ is $\RSAT_{A,I,F}$ with the stated hypotheses of the theorem.
\end{itemize}
We claim parsimonious reductions from $\shCSAT$ to $\shRSAT_1$,
and from $\shRSAT_i$ to $\shRSAT_{i+1}$ for each $i$.

Step 1: We can reduce $\CSAT$ to $\RSAT_1$ through the method of \emph{gate
dilation} and ancillas.  Here an \emph{ancilla} is any fixed input to
the circuit that is used for scratch space; the definition of $\RSAT_1$
includes ancillas.  To define gate dilation, we can let $A$ be any alphabet
with the structure of an abelian group.  If $\alpha:A^k \to A$ is a gate,
then we can replace it with the reversible gate
\[ \beta:A^{k+1} \to A^{k+1} \qquad \beta(x,a) = (x,\alpha(x)+a), \]
where $x \in A^k$ is the input to $\alpha$ and $a \in A$ is an ancilla
which is set to $a=0$ when $\beta$ replaces $\alpha$.  The gate $\beta$
is called a \emph{reversible dilation} of $\alpha$.  We can similarly
replace every irreversible $\COPY$ gate with the reversible gate
\[ \COPY:A^2 \to A^2 \qquad \COPY(x,a) = (x,x+a), \]
where again $a$ is an ancilla set to $a=0$.  Dilations also leave extra
output symbols, but under the satisfiability rule of $\RSAT_1$, we
can ignore them.

In the Boolean case $A = \Z/2$, the reversible $\COPY$ gate is denoted
$\CNOT$ (controlled $\NOT$), while the dilation of $\AND$ is denoted $\CCNOT$
(doubly controlled $\NOT$) and is called the Toffoli gate.  We can add to
this the uncontrolled $\NOT$ gate
\[ \NOT(x) = x+1. \]
These three gates are clearly enough to dilate irreversible Boolean circuits.
(They are also a universal gate set for reversible computation.)

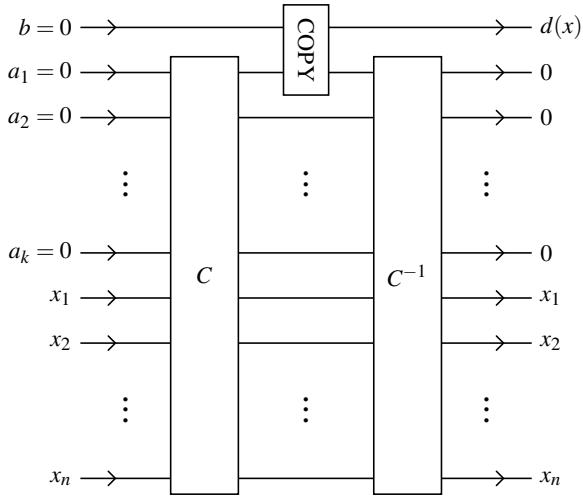
\begin{figure}[htb] \begin{center}
\begin{tikzpicture}[scale=.6,semithick,decoration={markings,
    mark=at position 0.08 with {\arrow{angle 90}},
    mark=at position 0.94 with {\arrow{angle 90}}}]
\draw (0,10) node[anchor=east] {$b=0$};
\draw[postaction={decorate}] (0,10) -- (10,10);
\draw (10,10) node[anchor=west] {$d(x)$};
\draw (0,9) node[anchor=east] {$a_1=0$};
\draw[postaction={decorate}] (0,9) -- (10,9);
\draw (10,9) node[anchor=west] {$0$};
\draw (0,8) node[anchor=east] {$a_2=0$};
\draw[postaction={decorate}] (0,8) -- (10,8);
\draw (10,8) node[anchor=west] {$0$};
\draw (0,5) node[anchor=east] {$a_k=0$};
\draw[postaction={decorate}] (0,5) -- (10,5);
\draw (10,5) node[anchor=west] {$0$};
\draw (0,4) node[anchor=east] {$x_1$};
\draw[postaction={decorate}] (0,4) -- (10,4);
\draw (10,4) node[anchor=west] {$x_1$};
\draw (0,3) node[anchor=east] {$x_2$};
\draw[postaction={decorate}] (0,3) -- (10,3);
\draw (10,3) node[anchor=west] {$x_2$};
\draw (0,0) node[anchor=east] {$x_n$};
\draw[postaction={decorate}] (0,0) -- (10,0);
\draw (10,0) node[anchor=west] {$x_n$};
\draw (1,6.7) node {$\Large \vdots$};
\draw (1,1.7) node {$\Large \vdots$};
\draw[fill=white] (2,-.35) rectangle (3.5,9.35);
\draw (2.75,4.5) node {$C$};
\draw[fill=white] (4.5,8.5) rectangle (5.5,10.5);
\draw (5,9.5) node[rotate=270] {$\COPY$};
\draw (5,6.7) node {$\Large \vdots$};
\draw (5,1.7) node {$\Large \vdots$};
\draw[fill=white] (6.5,-.35) rectangle (8,9.35);
\draw (7.25,4.5) node {$C^{-1}$};
\draw (9,6.7) node {$\Large \vdots$};
\draw (9,1.7) node {$\Large \vdots$};
\end{tikzpicture}
\end{center}
\caption{Using uncomputation to reset ancilla values.}
\label{f:uncomp} \end{figure}

Step 2: We can reduce $\RSAT_1$ to $\RSAT_2$ using the method of
uncomputation.  Suppose that a circuit $C$ in the $\RSAT_1$ problem has
an $n$-bit variable input register $x = (x_1,x_2,\dots,x_n)$ and a $k$-bit
ancilla register $a = (a_1,a_2,\dots,a_k)$.  Suppose that $C$ calculates
decision output $d(x)$ in the $a_1$ position (when $a_1 = 0$ since it is
an ancilla).   Then we can make a new circuit $C_1$ with the same $x$
and $a$ and one additional ancilla bit $b$, defined by applying $C$,
then copying the output to $b$ and negating $b$, then applying $C^{-1}$,
as in \Fig{f:uncomp}.  If $n = k+1$, then $C_1$ is a reduction of $C$ from
$\RSAT_1$ to $\RSAT_2$.  If $n > k+1$, then we can pad $C_1$ with $n-k-1$
more ancillas and do nothing with them to produce a padded circuit $C_2$.
If $n < k+1$, then we can pad $C_1$ with $k+1-n$ junk input bits, and at
the end of $C_1$, copy of these junk inputs to $k+1-n$ of the first $k$
ancillas; again to produce $C_2$.  (Note that $k+1-n \le k$ since we can
assume that $C$ has at least one variable input bit.)  In either of these
cases, $C_2$ is a reduction of $C$ from $\RSAT_1$ to $\RSAT_2$.

Step 3: We can reduce $\RSAT_2$ to $\RSAT_3$ by grouping symbols and
permuting alphabets.   As a first step, let $A_1 = \Z/2 \times \Z/2$
with $I_1 = F_1 = \{(0,0),(1,0)\}$.  Then we can reduce $\RSAT_2$ to
$\RSAT_{A_1,I_1,F_1}$ by pairing each of input or output bit with an
ancilla; we can express each ternary gate over $\Z/2$ in terms of binary
gates over $A_1$.  Now let $A_2$ be any alphabet with disjoint $I_2$ and
$F_2$, and with at least two symbols not in $I_2$ or $F_2$.  Then we can
embed $(A_1,I_1,F_1)$ into $(A_2,I_2,F_2)$ arbitrarily, and extend any gate
$\alpha:A_1^k \to A_1^k$ (with $k \in \{1,2\}$, say) arbitrarily to a gate
$\beta:A_2^k \to A_2^k$ which is specifically an even permutation.  This
reduces $\RSAT_2 = \RSAT_{A_1,I_1,F_1}$ to $\RSAT_3 = \RSAT_{A_2,I_2,F_2}$.

Step 4: Finally, $(A_3,I_3,F_3)$ is an alphabet that is not of our choosing,
and we wish to reduce $\RSAT_3 = \RSAT_{A_2,I_2,F_2}$ to $\RSAT_4 =
\RSAT_{A_3,I_3,F_3}$.  We choose $k$ such that
\[ |A_3|^k \ge |I_3|^k + |F_3|^k + 2. \]
We then let $A_2 = A_3^k$ and $I_2 = I_3^k$, and we choose $F_2 \subseteq A_2
\setminus I_2$ with $|F_2| = |F_3|^k$.  A circuit in $\RSAT_{A_2,I_2,F_2}$
can now be reduced to a circuit in $\RSAT_{A_3,I_3,F_3}$ by grouping
together $k$ symbols in $A_3$ to make a symbol in $A_2$.   Since $I_2 =
I_3^k$, the initialization is the same.  At the end of the circuit, we
convert finalization in $F_2$ to finalization in $F_3^k$ with some unary
permutation of the symbols in $A_2$.
\end{proof}

\subsection{Standard algorithms}
\label{ss:standard}

In this section we will review three standard algorithms that in one way or
another serve as converses to \Thm{th:main}.  Instead of hardness results,
they are all easiness results. (Note that \Thm{th:inshp} produces a
conditional type of easiness, namely predicates that can be evaluated in
polynomial time.)

\begin{theorem} The homology $H_*(X)$ of a finite simplicial complex $X$
can be computed in polynomial time.
\label{th:homology} \end{theorem}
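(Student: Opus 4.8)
The plan is to reduce computing $H_*(X;\Z)$ to computing the Smith normal form of the integer boundary matrices of the simplicial chain complex, and to invoke a polynomial-time Smith normal form algorithm. First I would fix notation: for a finite simplicial complex $X$ with $c_k$ simplices in dimension $k$, order the $k$-simplices and form the boundary maps $\partial_k : \Z^{c_k} \to \Z^{c_{k-1}}$ as integer matrices, whose entries lie in $\{-1,0,1\}$ and are read directly off the incidence data and chosen orientations. Since $X$ is part of the input as a triangulation, these matrices are written down in time polynomial in the size of $X$, and each has dimensions bounded by $c_k \le |X|$.

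Next I would recall the standard structure theorem: choosing unimodular changes of basis bringing $\partial_k$ to Smith normal form $\mathrm{diag}(d_1^{(k)}, \dots, d_{r_k}^{(k)}, 0, \dots, 0)$ with $d_1^{(k)} \mid d_2^{(k)} \mid \cdots$, one reads off $H_k(X;\Z) \cong \Z^{\beta_k} \oplus \bigoplus_i \Z/d_i^{(k+1)}$, where the free rank is $\beta_k = c_k - r_k - r_{k+1}$ and the torsion coefficients are the $d_i^{(k+1)} > 1$. Thus it suffices to compute, for every $k$, the rank $r_k$ and the list of elementary divisors of $\partial_k$. I would then cite a polynomial-time Smith normal form algorithm over $\Z$ — e.g.\ the algorithm of Kannan and Bachem, or a modular/fraction-free variant — which computes the normal form (indeed with transforming matrices, though we need only the diagonal) in time polynomial in the matrix dimensions and in the bit-length of the entries; here the entries start in $\{-1,0,1\}$, so the only concern is intermediate expression swell.

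The step I expect to be the main obstacle is precisely controlling that intermediate entry growth: naive Gaussian-style elimination over $\Z$ can produce entries whose bit-length grows exponentially. I would handle this by appealing to an algorithm specifically engineered to avoid this — either Kannan--Bachem's, which keeps entries bounded via repeated reduction, or a Hermite-normal-form-then-Smith approach using modular arithmetic with a modulus bounded by a Hadamard-type determinant bound (so the moduli have polynomially many bits). Either way the cited result guarantees a genuinely polynomial running time, and assembling the Betti numbers and torsion coefficients from the resulting diagonals is then a trivial bookkeeping step. I would close by noting that the same computation yields $H^*(X;\Z)$ and homology with any coefficients by the universal coefficient theorem, which we will use implicitly elsewhere.
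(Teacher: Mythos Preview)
Your proposal is correct and matches the paper's approach exactly: the paper simply states that the theorem reduces to computing Smith normal forms of integer matrices, citing \cite{DC:homology}, without further detail. Your outline fleshes out precisely this reduction (boundary matrices, elementary divisors, a polynomial-time SNF algorithm with controlled coefficient growth), so there is nothing to add or correct.
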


Briefly, \Thm{th:homology} reduces to computing Smith normal forms
of integer matrices \cite{DC:homology}.

\begin{theorem} If $G$ is a fixed finite group and $X$ is a finite,
connected simplicial complex regarded as the computational input, then
$\#H(X,G)$ and $\#Q(X,G)$ are both in $\shP$.
\label{th:inshp} \end{theorem}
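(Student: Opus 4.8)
The plan is to show that the counting problems $\#H(X,G)$ and $\#Q(X,G)$ each admit a polynomial-time verifiable witness, which is exactly the definition of membership in $\shP$. The key observation is that since $G$ is \emph{fixed}, its order $|G|$ is a constant, and a homomorphism $f:\pi_1(X) \to G$ is completely determined by where it sends a generating set of $\pi_1(X)$. So the first step is to extract from the triangulation of $X$ a finite presentation of $\pi_1(X)$ in polynomial time: pick a spanning tree of the $1$-skeleton, take one generator for each edge not in the tree, and one relator for each $2$-simplex. The number of generators and relators is linear in the size of $X$, and each relator has length at most $3$. A candidate homomorphism is then a function from the generating set to $G$, which is a word of length $O(|X| \cdot \log|G|) = O(|X|)$ over a fixed alphabet; this is a legitimate polynomial-size witness.

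Second, I would describe the verification predicate $v(X,f)$: given the triangulation $X$ (encoded as $x$) and a proposed assignment $f$ of generators to elements of $G$, the verifier computes the presentation as above and checks that every relator maps to the identity of $G$. Each such check is a product of at most $3$ elements of the fixed group $G$, computable in constant time, and there are $O(|X|)$ relators, so $v \in \ccP$. The number of $f$ satisfying $v$ is precisely $|H(X,G)| = \#H(X,G)$, which places $\#H(X,G)$ in $\shP$.

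Third, for $\#Q(X,G)$ I would use the relation between normal subgroups with quotient $G$ and surjective homomorphisms: a normal subgroup $\Gamma \normaleq \pi_1(X)$ with $\pi_1(X)/\Gamma \cong G$ corresponds to a surjection $f:\pi_1(X) \onto G$, and two surjections give the same $\Gamma$ (the kernel) exactly when they differ by an automorphism of $G$. Thus $|Q(X,G)| = |\{f: \pi_1(X) \onto G\}| / |\Aut(G)|$. Since $|\Aut(G)|$ is a fixed constant, I can fold the division into the witness structure: take as a witness a pair $(f,\sigma)$ where $f:\pi_1(X)\to G$ and $\sigma$ ranges over a fixed transversal... more cleanly, the standard trick is to define the predicate $v'(X,(f,\sigma))$ where $\sigma \in \Aut(G)$, accepting iff $f$ is surjective \emph{and} $f$ is the lexicographically least representative in its $\Aut(G)$-orbit composed trivially --- but simplest is: let a witness be a surjection $f:\pi_1(X)\onto G$ together with a proof that $f$ is ``normalized'' (e.g., minimal in its orbit under a fixed ordering of $G$), checkable in constant time since $|\Aut(G)|$ is constant. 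Surjectivity of $f$ is checkable in polynomial time by computing the subgroup generated by the images of the generators (a bounded computation since $G$ is fixed). Then the number of accepted witnesses is exactly $\#Q(X,G)$, so it too lies in $\shP$.

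I expect the only subtlety --- not really an obstacle --- is the bookkeeping in the third step to ensure that $\#Q(X,G)$ is counted exactly rather than up to the $|\Aut(G)|$ factor; this is handled entirely by the fact that $\Aut(G)$ is finite of fixed size, so orbit-minimality can be verified in constant time. Everything else is routine: the passage from a triangulation to a bounded-length presentation of $\pi_1$ is classical, and all group-theoretic checks in the fixed group $G$ cost $O(1)$ per relator. The whole argument is really just the remark that ``counting colorings of a bounded-degree structure by a fixed palette subject to local constraints is in $\shP$,'' specialized to fundamental-group presentations.
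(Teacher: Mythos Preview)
Your proposal is correct and follows essentially the same argument as the paper: build a presentation of $\pi_1(X)$ from a spanning tree of the $1$-skeleton, take as certificate an assignment of generators to elements of $G$, verify the relations, and for $\#Q(X,G)$ additionally check surjectivity and accept only the lexicographically least representative of each $\Aut(G)$-orbit. The only minor slip is that orbit-minimality is checkable in \emph{polynomial} time (linear in the number of generators, with the constant $|\Aut(G)|$ as a multiplier), not constant time; this does not affect the conclusion.
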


\begin{proof} By choosing a spanning tree for the 1-skeleton of $X$, we
can convert its 2-skeleton to a finite presentation $P$ of $\pi_1(X)$.
Then we can describe a homomorphism $f:\pi_1(X) \to G$ by the list of
its values on the generators in $P$.  This serves as a certificate; the
verifier should then check whether the values satisfy the relations in $P$.
This shows that $\#H(X,G)$ is in $\shP$.

The case of $\#Q(X,G)$ is similar but slightly more complicated.  The map $f$
is surjective if and only if its values on the generators in $P$ generate
$G$; the verifier can check this.  The verifier can also calculate the
$\Aut(G)$-orbit of $f$.   Given an ordering of the generators and an
ordering of the elements of $G$, the verifier can accept $f$ only when
it is alphabetically first in its orbit.  Since only surjections are
counted and each orbit is only counted once, we obtain that $\#Q(X,G)$
certificates are accepted.
\end{proof}

In the input to the third algorithm, we decorate a finite simplicial
complex $X$ with a complete ordering of its simplices (of all dimensions)
that refines the partial ordering of simplices given by inclusion.  If there
are $n$ simplices total, then for each $0 \le k \le n$, we let $X_k$ be
subcomplex formed by the first $k$ simplices, so that $X_0 = \emptyset$
and $X_n = X$.  Each $X_k$ has a relative boundary $\bd(X_k)$ in $X$.
(Here we mean boundary in the set of general topology rather than manifold
theory, \ie, closure minus interior.)  We define the \emph{width} of $X$
with its ordering to be the maximum number of simplices in any $\bd(X_k)$.

\begin{theorem} If $G$ is a fixed finite group and $X$ is a finite, connected
simplicial complex with a bounded-width ordering, then $\#H(X,G)$ and
$\#Q(X,G)$ can be computed in polynomial time (non-uniformly in the wdith).
\label{th:width} \end{theorem}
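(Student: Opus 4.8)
The plan is a transfer-matrix computation in which the bounded-width ordering slices $X$ into thin pieces, in the spirit of the classical derivations of the Frobenius--Schur and Mednykh formulas. First I would trade the homomorphism count for a ``gauge field'' count that is local on edges. Fix a basepoint $v$ and a spanning tree $\mathcal{T}$ of the $1$-skeleton of $X$ (which is connected because $X$ is). A \emph{$G$-labeling} of $X$ assigns an element $g_e\in G$ to each oriented edge $e$, subject to $g_{\bar e}=g_e^{-1}$ and to $g_{ab}g_{bc}g_{ca}=1$ for every $2$-simplex $abc$; let $L(X)$ be the set of all $G$-labelings. Simplices of dimension $\ge 3$ carry no labeling data and impose no relations, so $L(X)$ depends only on the $2$-skeleton of $X$, which is the standard presentation complex of $\pi_1(X)$ once $\mathcal{T}$ is contracted. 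The labelings that are trivial on every tree edge are in natural bijection with $H(X,G)$, and $L(X)$ is their free orbit under the residual gauge group $G^{\,V(X)\setminus\{v\}}$ acting by $g_{ab}\mapsto h_a g_{ab} h_b^{-1}$; hence
\[
  |L(X)| \;=\; |G|^{\,|V(X)|-1}\cdot \#H(X,G),
\]
and it suffices to compute $|L(X)|$.

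Next comes the dynamic program. Process the simplices in the given order $\sigma_1,\dots,\sigma_n$, and for each $k$ maintain a function $T_k$ from the set of $G$-labelings of the edges contained in $\bd(X_k)$ to $\N$, where $T_k(\ell)$ is the number of $G$-labelings of \emph{all} edges of $X_k$ that satisfy every $2$-simplex relation of $X_k$ and restrict to $\ell$ on the boundary edges. Since $\bd(X_k)$ has at most $w$ simplices, where $w$ is the width, the domain of $T_k$ has size at most $|G|^{w}$, a constant once $w$ and $G$ are fixed. The passage from $T_k$ to $T_{k+1}$ is assembled from three elementary moves, chosen according to the type of $\sigma_{k+1}$ and the change in the boundary set. \emph{Introduce}: when a new edge enters $\bd(X_{k+1})$, adjoin a fresh $G$-valued coordinate, replicating each old value over all $|G|$ choices of it (and if the new edge is maximal, so that it never joins the boundary, immediately discard that coordinate, which multiplies $T$ by $|G|$). \emph{Constrain}: when $\sigma_{k+1}$ is a $2$-simplex $abc$, its three edges are faces of $\sigma_{k+1}\notin X_k$ and hence already lie in $\bd(X_k)$, so in the domain of $T_k$; set $T_{k+1}=T_k$ on labelings with $g_{ab}g_{bc}g_{ca}=1$ and $T_{k+1}=0$ otherwise. \emph{Forget}: when an edge $e$ leaves the boundary, replace $T$ by its sum over the $|G|$ values of the coordinate $g_e$. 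Vertices and simplices of dimension $\ge 3$ trigger only boundary bookkeeping. Each move costs time polynomial in $|G|^{w}$, there are $O(n)$ of them, and recomputing the boundaries $\bd(X_k)$ and deciding the moves is polynomial, so for fixed $w$ and $G$ the whole algorithm runs in polynomial time in the size of $X$.

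Correctness rests on the observation that $\bd(X_k)$ is a genuine separator for this local problem. Because the ordering refines inclusion, every coface of an edge $e$ --- in particular every $2$-simplex containing $e$ --- is processed after $e$, and $e$ leaves the boundary exactly when its last coface is added; so by the time \emph{Forget} is applied to $e$, every relation involving $g_e$ has already been imposed, and dually no edge of a $2$-simplex is forgotten before that $2$-simplex is constrained. An induction on $k$ then confirms the stated meaning of $T_k$, and since $\bd(X_n)=\bd(X)=\emptyset$, the single value $T_n$ equals $|L(X)|$; dividing by $|G|^{|V(X)|-1}$ gives $\#H(X,G)$. To get $\#Q(X,G)$, run the same algorithm with target $J$ for each of the finitely many subgroups $J\le G$, and then solve the triangular linear system $|H(X,J)|=\sum_{K\le J}|\Aut(K)|\cdot|Q(X,K)|$ (the analogue of \eqref{e:homsum}) by induction on $|J|$, starting from $\#Q(X,\{1\})=1$.

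I expect the main obstacle to be the precise bookkeeping of the third paragraph: reading off, purely from the combinatorics of the relative boundaries $\bd(X_k)$, exactly which elementary move or moves to perform at each step, and checking that the invariant defining $T_k$ survives each one. The gauge-fixing identity and the inclusion--exclusion that produces $\#Q$ are routine by comparison; the real content is that the width bound makes the boundary $G$-labelings a sufficient --- and polynomially bounded --- state.
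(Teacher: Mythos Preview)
Your proposal is correct and is essentially the paper's own argument: both count $1$-cocycles (your $G$-labelings, the paper's $Z^1(X;G)$), divide by $|G|^{|V(X)|-1}$ to recover $\#H(X,G)$, compute the cocycle count by a dynamic program that maintains, for each partial labeling of the edges in $\bd(X_k)$, the number of extensions to $X_k$, and finally obtain $\#Q(X,G)$ by M\"obius inversion of \eqref{e:homsum}. Your Introduce/Constrain/Forget moves are exactly the paper's three update cases (copy $|G|$ times when an edge is added, restrict to a subvector when a triangle is added, sum out when an edge leaves the boundary), and your correctness justification is somewhat more explicit than the paper's terse treatment.
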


It is easy to make triangulations for all closed surfaces with uniformly
bounded width.  For instance, we can make such a triangulation of an
orientable surface $\Sigma_g$ from a Morse function chosen so that each
regular level is either one or two circles.  With more effort, we can make
a bounded-width triangulation of a Seifert-fibered 3-manifold $M$ using a
bounded-width triangulation of its orbifold base.   Thus \Thm{th:width}
generalizes the formulas of Mednykh \cite{Mednykh:compact} and Chen
\cite{Chen:seifert} in principle, although in practice their formulas
are more explicit and use better decompositions than triangulations.
\Thm{th:width} also applies to 3-manifolds with bounded Heegaard genus,
or more generally bounded Morse width.

\begin{proof} We can calculate $|H(X,G)|$ using the formalism of non-abelian
simplicial cohomology theory with coefficients in $G$ \cite{Olum:nonabelian}.
In this theory, we orient the edges of $X$ and we mark a vertex $x_0 \in
X$ as a basepoint.  A 1-cocycle is then a function from the edges of $X$ to
$G$ that satisfies a natural coboundary condition on each triangle, while
a 0-cochain is a function from the vertices to $G$ that takes the value 1
at $x_0$.  The 1-cocycle set $Z^1(X;G)$ has no natural group structure when
$G$ is non-commutative, while the relative 0-cochain set $C^0(X,x_0;G)$
is a group that acts freely on $Z^1(X;G)$. Then the set of orbits
\[ H^1(X,x_0;G) \defeq Z^1(X;G) / C^0(X,x_0;G). \]
can be identified with the representation set $H(X,G)$, while if $X$ has $v$
vertices, then $C^0(X,x_0;G) \cong G^{v_1}$.  Thus
\[ |H(X,G)| = |Z^1(X;G)|/|G|^{v-1}. \]
Our approach is to compute $|Z^1(X;G)|$ and divide.  We can then also
obtain $|Q(X,G)|$ from $|H(X,G)|$ by applying M\"obius inversion to
equation \eqref{e:homsum}.

The algorithm is an example of \emph{dynamical programming} in computer
science.  Working by induction for each $k$ from $0$ to $n$, it maintains a
vector $v_k$ of non-negative integers that consists of the number of ways to
extend each 1-cocycle on $\bd(X_k)$ to a 1-cocycle of $X_K$.  The dimension
of $v_k$ may be exponential in the number of edges of $\bd(X_k)$, but
since that is bounded, the dimension of $v_k$ is also bounded.  It is
straightforward to compute $v_{k+1}$ from $v_k$ when we pass from $X_k$
to $X_{k+1}$.  If $X_{k+1} \setminus X_k$ is an edge, then $v_{k+1}$
consists of $|G|$ copies of $v_k$.  If $X_{k+1} \setminus X_k$ is a triangle
and $\bd(X_{k+1})$ has the same edges as $\bd(X_k)$, then $v_{k+1}$ is a
subvector of $v_k$.   If $\bd(X_{k+1})$ has fewer edges than $\bd(X_k)$,
then $v_{k+1}$ is obtained from $v_k$ by taking local sums of entries.
\end{proof}

\section{Group theory}

In this section we collect some group theory results.  We do not consider any
of these results to be especially new, although we found it challenging
to prove \Thm{th:rubik}.

\subsection{Generating alternating groups}
\label{ss:alt}

\begin{lemma}[\Cf\ {\cite[Lem. 7]{DGK:shuffles}}] Let $S$ be a finite set
and let $T_1,T_2,\dots,T_n \subseteq S$ be a collection of subsets with at
least 3 elements each, whose union is $S$, and that form a connected graph
under pairwise intersection.  Then the permutation groups $\Alt(T_i)$
together generate $\Alt(S)$.
\label{l:alt} \end{lemma}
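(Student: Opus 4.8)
The plan is to prove this by induction on $n$, the number of subsets, using the connectivity of the intersection graph to reduce to a two-subset case, and then reducing the two-subset case to the classical fact that $\Alt(k-1)$ and $\Alt(k-1)$ overlapping in $k-2$ points generate $\Alt(k)$.

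First I would dispose of $n=1$ trivially and then isolate the key case $n=2$: suppose $S = T_1 \cup T_2$ with $|T_1 \cap T_2| \ge 1$. If the intersection has at least two elements, I can write $S$ as a chain of sets each obtained from the previous by adjoining a single element while keeping the running intersection with $T_1$ (or $T_2$) nonempty, so it suffices to handle the situation $|S| = |T_1|+1$, i.e. $T_2 = S \setminus \{p\}$ and $T_1 = S \setminus \{q\}$ with $p \ne q$; here $|T_1 \cap T_2| = |S| - 2 \ge 1$. This is the standard generation lemma: the two copies of $\Alt$ on $(|S|-1)$-element sets sharing $(|S|-2)$ points generate $\Alt(S)$, which one sees by noting their union contains enough $3$-cycles (every $3$-cycle on $S$ avoids at least one of $p,q$ since $|S| \ge 3$ forces... actually a $3$-cycle involves $3$ points, and if it involves both $p$ and $q$ it still lies in neither $T_i$) — so instead I would argue via conjugation: a $3$-cycle $(p\,a\,b)$ lies in $\Alt(T_2)$, a $3$-cycle $(a\,b\,q)^{-1}$ type element... the cleanest route is that $\Alt(T_1)$ acts transitively on $S \setminus \{q\}$, so conjugating any fixed $3$-cycle of $\Alt(T_2)$ by $\Alt(T_1)$ produces all $3$-cycles supported away from $q$, and symmetrically; together these are all $3$-cycles of $S$, which generate $\Alt(S)$.

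Next, for general $n$, I would induct: since the intersection graph of $T_1, \dots, T_n$ is connected, after relabeling I may assume $T_1, \dots, T_{n-1}$ already has connected intersection graph and $T_n$ meets $T_i$ for some $i < n$ in at least $3$ elements (I can choose the labeling so that removing $T_n$ leaves a connected graph — take $T_n$ to be a leaf of a spanning tree). Let $S' = T_1 \cup \dots \cup T_{n-1}$. By the inductive hypothesis, the $\Alt(T_j)$ for $j < n$ generate $\Alt(S')$. Now $S = S' \cup T_n$, and $S' \cap T_n \supseteq T_i \cap T_n$ has at least $3$ elements, so $|S' \cap T_n| \ge 3$. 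Applying the $n=2$ case to the pair $S', T_n$ — whose union is $S$ and whose intersection has at least $3 \ge 1$ elements — the groups $\Alt(S')$ and $\Alt(T_n)$ together generate $\Alt(S)$. Since $\Alt(S')$ is itself generated by the $\Alt(T_j)$, $j<n$, we are done.

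The main obstacle I anticipate is purely bookkeeping in the $n=2$ reduction: justifying that a pair of sets with union $S$ and intersection of size $\ge 1$ (not necessarily $\ge 3$) can be built up as a chain so that the classical "overlap in all but one point" lemma applies, and making sure the chain's intermediate sets genuinely have their $\Alt$ groups available. Actually the subtlety is that in the chain $T_1 = U_0 \subsetneq U_1 \subsetneq \dots \subsetneq U_m = S$, I only know $\Alt(T_1)$ and $\Alt(T_2)$, not $\Alt(U_j)$ for intermediate $j$ — so I should instead run the chain argument so that each step adjoins one point of $T_2 \setminus T_1$ and the "small" set at each step is always $T_2$ itself intersected appropriately, reducing to: if $|T_1 \cap T_2| \ge 1$ and $|T_2 \setminus T_1| \ge 2$, peel off points of $T_2 \setminus T_1$ one at a time. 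This is where I would spend the most care; everything else is a clean two-layer induction. The case $|T_2 \setminus T_1| \le 1$ is either trivial ($T_2 \subseteq T_1$, nothing to prove) or exactly the base lemma.
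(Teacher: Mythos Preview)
Your plan is correct, and the core mechanism --- adjoining a single new point $b$ to a running set $U$ by taking a $3$-cycle $\alpha \in \Alt(T_j)$ sending some $a \in U \cap T_j$ to $b$, then using $\alpha$ together with $\Alt(U)$ to generate $\Alt(U \cup \{b\})$ --- is exactly what the paper does. The difference is purely organizational: you run a two-layer induction (outer on $n$, using a spanning-tree leaf to reduce to $n=2$; inner by peeling points of $T_2 \setminus T_1$ off one at a time), while the paper runs a single induction on $|S \setminus T_1|$, at each step picking any $T_j$ meeting the current $T_1$ and enlarging $T_1$ by one point. The one-layer version is cleaner and sidesteps exactly the bookkeeping you flagged as the main obstacle, since it never needs to isolate and fully prove an $n=2$ sublemma.

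Two small slips worth fixing. First, connectivity only gives $|T_i \cap T_n| \ge 1$, not $\ge 3$; this is harmless since your $n=2$ case only uses nonempty intersection. Second, in your symmetric base case $T_1 = S\setminus\{q\}$, $T_2 = S\setminus\{p\}$, mere transitivity of $\Alt(T_1)$ on $T_1$ does not conjugate a single $3$-cycle through $q$ to all such $3$-cycles; you need $2$-transitivity (hence $|T_1|\ge 4$) or a direct check when $|T_1|=3$. Your final ``peel off one point'' formulation, carried out carefully, avoids this issue and is precisely the paper's inductive step.
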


\begin{proof} We argue by induction on $|S \setminus T_1|$.  If $T_1 =
S$, then there is nothing to prove.  Otherwise, we can assume (possibly
after renumbering the sets) that there is an element $a \in T_1 \cap T_2$
and an element $b \in T_2 \setminus T_1$.  Let $\alpha \in \Alt(T_2)$ be a
3-cycle such that $\alpha(a) = b$.  Then the 3-cycles in $\Alt(T_1)$, and
their conjugates by $\alpha$, and $\alpha$ itself if it lies in $\Alt(T_1
\cup \{b\})$, include all 3-cycles in $\Alt(T_1 \cup \{b\})$.  Thus we
generate $\Alt(T_1 \cup \{b\})$ and we can replace $T_1$ by $T_1 \cup \{b\}$.
\end{proof}

\subsection{Joint surjectivity}
\label{ss:joint}

Recall the existence half of the Chinese remainder theorem:  If 
$d_1,d_2,\ldots,d_n$ are pairwise relatively prime integers, then the
canonical homomorphism
\[ f:\Z \to \Z/d_1 \times \Z/d_2 \times \dots \times \Z/d_n \]
is (jointly) surjective.   The main hypothesis is ``local" in the
sense that it is a condition on each pair of divisors $d_i$ and $d_j$,
namely $\mathrm{gcd}(d_i,d_j) = 1$.  For various purposes, we will need
non-commutative joint surjectivity results that resemble the classic Chinese
remainder theorem.  (But we will not strictly generalize the Chinese
remainder theorem, although such generalizations exist.)  Each version
assumes a group homomorphism
\[ f:K \to G_1 \times G_2 \times \dots \times G_n \]
that surjects onto each factor $G_i$, and assumes certain other local
hypotheses, and concludes that $f$ is jointly surjective.  Dunfield-Thurston
\cite[Lem. 3.7]{DT:random} and the first author \cite[Lem. 3.5]{K:zdense}
both have results of this type and call them ``Hall's lemma", but Hall
\cite[Sec. 1.6]{Hall:eulerian} only stated without proof a special case
of Dunfield and Thurston's lemma.  Ribet \cite[Lem. 3.3]{Ribet:adic} also
has such a lemma with the proof there attributed to Serre.  In this paper,
we will start with a generalization of Ribet's lemma.

We define a group homomorphism
\[ f:K \to G_1 \times G_2 \times \dots \times G_n \]
to be \emph{$k$-locally surjective} for some integer $1 \le k \le n$ if it
surjects onto every direct product of $k$ factors.  Recall also that if $G$
is a group, then $G' = [G,G]$ is a notation for its commutator subgroup.

\begin{lemma}[After Ribet-Serre {\cite[Lem. 3.3]{Ribet:adic}}] 
Let 
\[ f:K \to G_1 \times G_2 \times \dots \times G_n \]
be a 2-locally surjective group homomorphism, such that also
its abelianization
\[ f_\ab:K \to (G_1)_\ab \times (G_2)_\ab \times \dots \times (G_n)_\ab \]
is $\ceil{(n+1)/2}$-locally surjective.  Then
\[ f(K) \ge G_1' \times G_2' \times \dots \times G_n'. \]
\eatline \label{l:ribet} \end{lemma}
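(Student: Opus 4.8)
The plan is to induct on $n$, the number of factors. The base cases $n = 1$ and $n = 2$ are immediate: for $n=1$ the hypothesis $f$ surjects onto $G_1$ gives $f(K) = G_1 \ge G_1'$, and for $n=2$ the $2$-local surjectivity hypothesis says $f$ is already jointly surjective. So assume $n \ge 3$ and that the statement holds for all smaller numbers of factors. Write $G = G_1 \times \dots \times G_n$, let $\pi_i: G \to G_i$ be the projections, and let $H = f(K) \le G$. Applying the inductive hypothesis to the composition of $f$ with the projection onto any $n-1$ of the factors — noting that $2$-local surjectivity and $\ceil{n/2}$-local surjectivity of the abelianization are inherited by any sub-product, and $\ceil{n/2} \le \ceil{(n+1)/2}$ — we learn that $H$ surjects onto $G_1' \times \dots \times \widehat{G_i'} \times \dots \times G_n'$ for every $i$. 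The goal is to upgrade this to $H \ge G_1' \times \dots \times G_n'$.

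The mechanism is Goursat's lemma (\Lem{l:goursat}) applied to the two subgroups $N_1 = G_1' \times \{1\} \times \dots \times \{1\}$ and $N_{>1} = \{1\} \times G_2' \times \dots \times G_n'$ of the target $G_1' \times \dots \times G_n'$. Consider $H_0 = H \cap (G_1' \times \dots \times G_n')$. By the inductive step, $H_0$ projects onto $\{1\} \times G_2' \times \dots \times G_n'$ (take $i = 1$ and intersect with commutator subgroups, using that $H$ contains the commutator subgroup of any factor it surjects onto, since $[H,H]$ lands in $\prod G_i'$). Likewise $H_0$ projects onto $G_1' \times \dots \times G_{n-1}' \times \{1\}$ (take $i = n$). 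So $H_0$ maps onto each of the two "coordinate blocks" $G_1'$ and $G_2' \times \dots \times G_n'$. By Goursat, the image of $H_0$ in $G_1' \times (G_2' \times \dots \times G_n')$ is the fiber product of these two over a common quotient $Q$ of both $G_1'$ and $G_2' \times \dots \times G_n'$. We want to show $Q$ is trivial, which forces $H_0 = G_1' \times \dots \times G_n'$ and hence $H \ge \prod G_i'$ as desired.

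To kill $Q$: first, $Q$ is a common quotient of the \emph{perfect} group $G_1'$ — no, $G_1'$ need not be perfect — so instead observe $Q$ is a quotient of $G_1'$, hence $Q_\ab$ is a quotient of $(G_1')_\ab$, and $Q$ is also a quotient of $G_2' \times \dots \times G_n'$. This is where the $\ceil{(n+1)/2}$-local surjectivity of $f_\ab$ enters. The common quotient $Q$ would give a nontrivial "diagonal" relation; I would argue that such a relation, combined with the corresponding relations obtained from the other choices of which factor to single out, overdetermines the abelianization unless $\ceil{(n+1)/2}$-local surjectivity fails. Concretely, I expect to show that $Q$ being a quotient of $G_1'$ makes $Q$ a quotient of a single factor's derived group, while being a quotient of a product of the other $n-1$ derived groups, and tracking this through $f_\ab$, produces a subgroup of $\prod (G_i)_\ab$ on which $f_\ab$ fails to be $\ceil{(n+1)/2}$-locally surjective — the bound $\ceil{(n+1)/2}$ being exactly what is needed so that "half plus one" of the factors already determine the rest, leaving no room for a diagonal. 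This last step — extracting the precise counting bound $\ceil{(n+1)/2}$ from the Goursat amalgam and the abelianization hypothesis — is the main obstacle; everything else is bookkeeping with the inductive hypothesis and Goursat's lemma. I would model the argument closely on Ribet's proof \cite[Lem. 3.3]{Ribet:adic}, checking that it survives the weakening from "$f_\ab$ surjective" to "$f_\ab$ merely $\ceil{(n+1)/2}$-locally surjective."
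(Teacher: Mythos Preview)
Your approach has a genuine gap, and it diverges from the paper's argument in a way that matters.

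The paper does \emph{not} induct by dropping one factor and then invoke Goursat. Instead it sets $t = \lceil (n+1)/2 \rceil$, projects onto the first $t$ factors, and applies induction there. The point of choosing $t$ rather than $n-1$ is that on a $t$-factor sub-product the abelianization hypothesis is \emph{full} surjectivity of $f_\ab$, not merely $k$-local for some $k < t$. Combining the inductive conclusion $\pi(f(K)) \ge G_1' \times \dots \times G_t'$ with $\pi(f(K))_\ab = (G_1)_\ab \times \dots \times (G_t)_\ab$ immediately gives $\pi(f(K)) = G_1 \times \dots \times G_t$: that is, $f$ is \emph{fully} $t$-locally surjective, not just modulo commutators. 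This bootstrap is the key step you are missing. Once you have it, the finish is a direct commutator trick: for arbitrary $g_t, h_t \in G_t$, $t$-local surjectivity produces elements $(g_1,\dots,g_t,1,\dots,1)$ and $(1,\dots,1,h_t,\dots,h_n)$ in $f(K)$ (the second uses $n - t + 1 \le t$, which is exactly the inequality $t \ge \lceil (n+1)/2 \rceil$), and their commutator is $([g_t,h_t],1,\dots,1)$ after relabeling, giving $G_t' \le f(K)$.

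In your argument, two things break. First, your claim that $H_0 = H \cap \prod G_i'$ projects onto the block $G_2' \times \dots \times G_n'$ is not justified: induction on $n-1$ factors gives $\pi_{\hat 1}(H) \ge G_2' \times \dots \times G_n'$, but an element of $H$ hitting a prescribed tuple in $G_2' \times \dots \times G_n'$ may have first coordinate outside $G_1'$, so it need not lie in $H_0$. Passing to $[H,H]$ only recovers second derived subgroups. Second, even granting the block surjectivity, you admit you do not see how to kill the Goursat quotient $Q$ using the $\lceil (n+1)/2 \rceil$ hypothesis; $Q$ is a common quotient of $G_1'$ and $\prod_{j>1} G_j'$, and there is no obvious route from this to a constraint on $f_\ab$. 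The paper's choice of $t$ is precisely what makes the abelianization hypothesis bite: it converts a local hypothesis into a global one on the sub-product, which then feeds the commutator trick. Your $n-1$-factor induction never reaches that threshold.
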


\begin{proof} We argue by induction on $n$.  If $n=2$, then there is
nothing to do.  Otherwise let $t = \ceil{(n+1)/2}$ and note
that $n > t > n/2$.  Let 
\[ \pi:G_1 \times G_2 \times \dots \times G_n \to
    G_1 \times G_2 \times \dots \times G_t \]
be the projection onto the first $t$ factors.  Then $\pi \circ f$ satisfies
the hypotheses, so
\[ \pi(f(K)) \ge G_1' \times G_2' \times \dots \times G_t'. \]
Morever, $(\pi \circ f)_\ab$ is still $t$-locally surjective, which is to
say that
\[ \pi(f(K))_\ab = (G_1)_\ab \times (G_2)_\ab \times
    \dots \times (G_t)_\ab. \]
Putting these two facts together, we obtain
\[ \pi(f(K)) = G_1 \times G_2 \times \dots \times G_t. \]
Repeating this for any $t$ factors, we conclude that $f$ is $t$-locally
surjective.

Given any two elements $g_t,h_t \in G_t$, we can use $t$-local surjectivity
to find two elements
\begin{multline*} (g_1,g_2,\dots,g_{t-1},g_t,1,1,\dots,1), \\
    (1,1,\dots,1,h_t,h_{t+1},\dots,h_n) \in f(K).
\end{multline*}
Their commutator then is $[g_t,h_t] \in G_t \cap f(K)$.   Since $g_t$
and $h_t$ are arbitrary, we thus learn that $G'_t \le f(K)$, and since
this construction can be repeated for any factor, we learn that
\[ f(K) \ge G_1' \times G_2' \times \dots \times G_n', \]
as desired.
\end{proof}

We will also use a complementary result, Goursat's lemma, which can be used
to establish 2-local surjectivity.  (Indeed, it is traditional in some papers
to describe joint surjectivity results as applications of Goursat's lemma.)

\begin{lemma}[Goursat \cite{Goursat:divisions,BSZ:goursat}] Let $G_1$ and
$G_2$ be groups and let $H \leq G_1 \times G_2$ be a subgroup that
surjects onto each factor $G_i$.  Then there exist normal subgroups $N_i
\normaleq G_i$ such that $N_1 \times N_2 \le H$ and $H/(N_1 \times N_2)$
is the graph of an isomorphism $G_1/N_1 \cong G_2/N_2$.
\label{l:goursat} \end{lemma}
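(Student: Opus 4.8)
The plan is to read off the subgroups $N_1,N_2$ directly from $H$ and then construct the asserted isomorphism by hand. First I would set
\[ N_1 = \{g_1 \in G_1 \mid (g_1,1) \in H\}, \qquad
   N_2 = \{g_2 \in G_2 \mid (1,g_2) \in H\}. \]
Each $N_i$ is visibly a subgroup of $G_i$; since $H$ is closed under multiplication, $(n_1,1),(1,n_2) \in H$ gives $(n_1,n_2) \in H$, so $N_1 \times N_2 \le H$; and, once that inclusion is known, $N_1 \times N_2 = H \cap (N_1 \times N_2)$ is exactly the kernel of the restriction to $H$ of the quotient map $G_1 \times G_2 \to (G_1/N_1) \times (G_2/N_2)$.

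Next I would verify $N_i \normaleq G_i$; this is the first place the surjectivity hypothesis enters. Given $n_1 \in N_1$ and an arbitrary $g_1 \in G_1$, surjectivity of $H$ onto $G_1$ supplies some $g_2$ with $(g_1,g_2) \in H$, and then
\[ (g_1,g_2)(n_1,1)(g_1,g_2)^{-1} = (g_1 n_1 g_1^{-1},1) \in H, \]
so $g_1 n_1 g_1^{-1} \in N_1$; the argument for $N_2$ is symmetric.

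Finally I would define $\phi : G_1/N_1 \to G_2/N_2$ by $\phi(g_1 N_1) = g_2 N_2$ whenever $(g_1,g_2) \in H$. Totality is immediate from surjectivity of $H$ onto $G_1$. Well-definedness reduces to showing that if $(g_1,g_2),(g_1',g_2') \in H$ with $g_1 N_1 = g_1' N_1$, then $g_2 N_2 = g_2' N_2$: indeed $(g_1^{-1}g_1',1) \in H$ since $g_1^{-1}g_1' \in N_1$, so $(1,g_2^{-1}g_2') = (g_1^{-1}g_1',1)^{-1}(g_1,g_2)^{-1}(g_1',g_2') \in H$, whence $g_2^{-1}g_2' \in N_2$. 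That $\phi$ is a homomorphism is immediate from $H$ being a subgroup, surjectivity of $\phi$ follows from surjectivity of $H$ onto $G_2$, and injectivity follows by running the last computation in reverse: $\phi(g_1 N_1) = N_2$ forces $(g_1,1) \in H$, so $g_1 \in N_1$. Then the homomorphism $H \to (G_1/N_1) \times (G_2/N_2)$ has image the graph of the isomorphism $\phi$ and, by the first paragraph, kernel $N_1 \times N_2$, so $H/(N_1 \times N_2) \cong \mathrm{graph}(\phi)$, which is precisely the assertion.

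There is no real obstacle here: Goursat's lemma is elementary, and the only thing to watch is bookkeeping — keeping the roles of the two coordinates symmetric, making sure each invocation of surjectivity is to the intended factor, and distinguishing the inclusions among $N_1 \times N_2$, $H \cap (N_1 \times N_2)$, and $H$ that hold by definition from those that use the hypothesis.
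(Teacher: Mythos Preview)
Your argument is correct and is the standard elementary proof of Goursat's lemma: define $N_i$ as the kernel of the projection of $H$ to the other factor, check normality via the surjectivity hypothesis, and read off the isomorphism $G_1/N_1 \cong G_2/N_2$ from the graph structure of $H/(N_1\times N_2)$. The bookkeeping is all in order, including the well-definedness computation.

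As for comparison with the paper: there is nothing to compare. The paper states this lemma with attribution to Goursat and to the survey \cite{BSZ:goursat} but gives no proof; it is quoted as a classical tool. Your write-up is exactly the proof one finds in those references.
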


For instance, if $G_1$ is a simple group, then either $H = G_1 \times G_2$
or $H$ is the graph of an isomorphism $G_1 \cong G_2$.  In other words,
given a joint homomorphism
\[ f = f_1 \times f_2:K \to G_1 \times G_2 \]
which surjects onto each factor, either $f$ is surjective or $f_1$ and $f_2$
are equivalent by an isomorphism $G_1 \cong G_2$.  We can combine this
with the perfect special case of \Lem{l:ribet} to obtain exactly Dunfield
and Thurston's version.

\begin{lemma}[{\cite[Lem. 3.7]{DT:random}}] If 
\[ f:K \to G_1 \times G_2 \times \dots \times G_n \]
is a group homomorphism to a direct product of non-abelian simple groups,
and if no two factor homomorphism $f_i:K \to G_i$ and $f_j:K \to G_j$
are equivalent by an isomorphism $G_i \cong G_j$, then $f$ is surjective.
\label{l:sjoint}\end{lemma}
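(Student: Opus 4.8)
The plan is to deduce the statement from the perfect special case of \Lem{l:ribet}, using \Lem{l:goursat} to check its hypothesis. First I would make explicit the standing assumption of this section, that each factor homomorphism $f_i : K \to G_i$ is surjective; this is built into the joint-surjectivity framework, and without it the conclusion that $f$ is surjective cannot even hold.

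The key step is to show that $f$ is 2-locally surjective. Fix indices $i \neq j$ and consider the composite $f_{ij} = f_i \times f_j : K \to G_i \times G_j$. By the previous paragraph its image $H = f_{ij}(K)$ surjects onto each of the two factors. Since $G_i$ is simple, the special case of Goursat's lemma recorded just after \Lem{l:goursat} applies: either $H = G_i \times G_j$, or $H$ is the graph of an isomorphism $G_i \cong G_j$. In the second case there is an isomorphism $\sigma : G_i \to G_j$ with $f_j = \sigma \circ f_i$, that is, $f_i$ and $f_j$ are equivalent by an isomorphism $G_i \cong G_j$ — which the hypothesis of the lemma excludes. Hence $H = G_i \times G_j$, and since $i,j$ were arbitrary, $f$ is 2-locally surjective.

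Finally I would invoke perfectness: a non-abelian simple group $G_\ell$ equals its own commutator subgroup, since $G_\ell'$ is normal and is non-trivial because $G_\ell$ is non-abelian. Thus $G_\ell' = G_\ell$ and $(G_\ell)_\ab = 1$ for every $\ell$, so $f_\ab$ takes values in the trivial group and is vacuously $\ceil{(n+1)/2}$-locally surjective. Both hypotheses of \Lem{l:ribet} are then satisfied, and that lemma yields $f(K) \ge G_1' \times G_2' \times \dots \times G_n' = G_1 \times G_2 \times \dots \times G_n$; that is, $f$ is surjective.

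I do not expect a genuine obstacle: the content has already been isolated into \Lem{l:ribet} and \Lem{l:goursat}. The only points requiring care are matching the phrase ``equivalent by an isomorphism'' to Goursat's ``graph of an isomorphism,'' and noting that perfectness of the $G_i$ makes the abelianization hypothesis of \Lem{l:ribet} automatic.
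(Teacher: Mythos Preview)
Your proposal is correct and matches the paper's own argument: the text immediately preceding the lemma says to combine the perfect special case of \Lem{l:ribet} with the consequence of Goursat's lemma recorded after \Lem{l:goursat}, and that is precisely what you do. Your explicit acknowledgment of the standing 1-local surjectivity assumption and your remark that perfectness of each $G_\ell$ trivializes the abelianization hypothesis are both appropriate.
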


\begin{corollary} Let $K$ be a group and let
\[ N_1,N_2,\dots,N_n \normal K \]
be distinct maximal normal subgroups with non-abelian simple quotients
$G_i = K/N_i$.  Then
\[ G_1 \cong (N_2 \cap N_3 \cap \dots \cap N_n)/
    (N_1 \cap N_2 \cap \dots \cap N_n). \]
\label{c:sjoint} \end{corollary}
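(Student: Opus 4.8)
The plan is to deduce Corollary~\ref{c:sjoint} from \Lem{l:sjoint} by a direct application to a cleverly chosen homomorphism. First I would set $G = G_1 \times G_2 \times \dots \times G_n$ and let $f\colon K \to G$ be the product $f = f_1 \times \dots \times f_n$ of the quotient maps $f_i\colon K \onto G_i = K/N_i$. Each $f_i$ is surjective by construction, so the hypothesis of \Lem{l:sjoint} that we must verify is that no two factor maps $f_i$ and $f_j$ (for $i \ne j$) are equivalent via an isomorphism $G_i \cong G_j$. Here is where the distinctness of the $N_i$ enters: if $\theta\colon G_i \to G_j$ were an isomorphism with $\theta \circ f_i = f_j$, then $\ker f_i = \ker f_j$, i.e. $N_i = N_j$, contradicting the assumption that the maximal normal subgroups are distinct. (One should remark that the $G_i$ are non-abelian simple by hypothesis, so \Lem{l:sjoint} genuinely applies.) Therefore $f$ is surjective.

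Next I would unwind what surjectivity of $f$ says about intersections of the $N_i$. The kernel of $f$ is $\ker f = \bigcap_{i=1}^n N_i$, so $f$ induces an isomorphism $K/(N_1 \cap \dots \cap N_n) \cong G_1 \times \dots \times G_n$. Now project to the factors $G_2, \dots, G_n$: the composite $K \to G$ followed by the projection $G \onto G_2 \times \dots \times G_n$ is exactly $f_2 \times \dots \times f_n$, whose kernel is $N_2 \cap \dots \cap N_n$. Because $f$ is surjective (hence so is this composite), we get $K/(N_2 \cap \dots \cap N_n) \cong G_2 \times \dots \times G_n$. Finally, the subgroup $(N_2 \cap \dots \cap N_n)/(N_1 \cap \dots \cap N_n)$ of $K/(N_1 \cap \dots \cap N_n) \cong G_1 \times \dots \times G_n$ is precisely $f^{-1}$ of the kernel of the projection onto the last $n-1$ factors, which is the first factor $G_1 \times \{1\} \times \dots \times \{1\} \cong G_1$. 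This gives the claimed isomorphism $G_1 \cong (N_2 \cap \dots \cap N_n)/(N_1 \cap \dots \cap N_n)$.

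There is essentially no main obstacle here; the corollary is a routine bookkeeping consequence of \Lem{l:sjoint}, and the only point requiring a moment's care is the verification that distinct kernels prevent the factor maps from being isomorphic — which is immediate since a commuting isomorphism of quotients forces equality of kernels. For the edge case $n = 1$, the statement reads $G_1 \cong K/N_1$, which is the definition of $G_1$, so it holds trivially; thus I would either assume $n \ge 2$ or simply note this degenerate case in passing. I would present the argument in the two-to-three sentence form above rather than belabor the elementary isomorphism-theorem manipulations.
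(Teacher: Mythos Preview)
Your proposal is correct and follows essentially the same route as the paper: form the product $f = f_1 \times \dots \times f_n$ of the quotient maps, invoke \Lem{l:sjoint} to get surjectivity, and then read off the desired isomorphism. The paper is terser (it does not spell out the ``distinct kernels $\Rightarrow$ inequivalent factor maps'' verification, and in the last step it simply restricts $f$ to $f^{-1}(G_1) = N_2 \cap \dots \cap N_n$ rather than passing through the quotient $K/(N_1 \cap \dots \cap N_n)$), but the substance is identical.
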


\begin{proof} We can take the product of the quotient maps to obtain
a homomorphism
\[ f:K \to G_1 \times G_2 \times \dots \times G_n \]
that satisfies \Lem{l:sjoint}.  Thus we can restrict $f$ to
\[ f^{-1}(G_1) = N_2 \cap N_3 \cap \dots \cap N_n \]
to obtain a surjection
\[ f:N_2 \cap N_3 \cap \dots \cap N_n \onto G_1. \]
This surjection yields the desired isomorphism.
\end{proof}

We will use a more direct corollary of \Lem{l:goursat}.  We say that
a group $G$ is \emph{normally Zornian} if every normal subgroup of $G$
is contained in a maximal normal subgroup.  Clearly every finite group is
normally Zornian, and so is every simple group.  A more interesting result
implied by Neumann \cite[Thm. 5]{Neumann:remarks} is that every finitely
generated group is normally Zornian.  (Neumann's stated result is that
every subgroup is contained in a maximal subgroup, but the proof works
just as well for normal subgroups.  He also avoided the axiom of choice
for this result, despite our reference to Zorn's lemma.)  Recall also the
standard concept that a group $H$ is \emph{involved} in another group $G$
if $H$ is a quotient of a subgroup of $G$.

\begin{lemma} Suppose that 
\[ f:K \to G_1 \times G_2 \]
is a group homomorphism that surjects onto the first factor $G_1$, and
that $G_1$ is normally Zornian.  Then:
\begin{enumerate}
\item If no simple quotient of $G_1$ is involved in $G_2$, then 
$f(K)$ contains $G_1$.
\item If $f$ surjects onto $G_2$, and no simple quotient of $G_1$
is a quotient of $G_2$, then $f$ is surjective.
\end{enumerate}
\label{l:zorn} \end{lemma}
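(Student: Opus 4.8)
\textbf{Proof proposal for \Lem{l:zorn}.}

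The plan is to prove both parts by exhibiting a suitable normal subgroup of $G_1$ and invoking maximality via the normally Zornian hypothesis, with Goursat's lemma (\Lem{l:goursat}) supplying the structural dictionary. For part (1): suppose toward a contradiction that $f(K)$ does not contain $G_1 \times \{1\}$, equivalently that the projection $N = f(K) \cap (G_1 \times \{1\})$, viewed as a normal subgroup of $G_1$, is proper. Since $G_1$ is normally Zornian, $N$ is contained in some maximal normal subgroup $M \normal G_1$, so $G_1/M$ is simple. Now compose $f$ with the quotient map $G_1 \to G_1/M$ on the first factor to get $\bar f : K \to (G_1/M) \times G_2$, and let $H = \bar f(K)$. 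By construction $H$ still surjects onto the first factor $G_1/M$, and I claim it also still does not contain $(G_1/M)\times\{1\}$ --- indeed $H \cap ((G_1/M)\times\{1\})$ is the image of $N$ in $G_1/M$, which is contained in $M/M = \{1\}$. Apply \Lem{l:goursat} to $H \le (G_1/M) \times G_2$: since $G_1/M$ is simple, either $H = (G_1/M)\times G_2$ (impossible, as then $H$ contains $(G_1/M)\times\{1\}$) or $H$ is the graph of an isomorphism $G_1/M \cong G_2/N_2$ for some $N_2 \normaleq G_2$. The latter exhibits the simple group $G_1/M$ as a quotient of the subgroup $\bar f(K)$'s second-coordinate image, hence as a quotient of a subgroup of $G_2$, i.e.\ involved in $G_2$ --- contradicting the hypothesis. (The ``simple quotient of $G_1$'' in the hypothesis is exactly $G_1/M$.)

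For part (2): now $f$ surjects onto both factors, so $f(K) \le G_1 \times G_2$ surjects onto each, and \Lem{l:goursat} applies directly: there are $N_1 \normaleq G_1$, $N_2 \normaleq G_2$ with $N_1 \times N_2 \le f(K)$ and $f(K)/(N_1\times N_2)$ the graph of an isomorphism $G_1/N_1 \cong G_2/N_2$. If $N_1 = G_1$ then $f(K) \supseteq G_1 \times N_2$ and $f(K)/(G_1 \times N_2)$ is the graph of an isomorphism $\{1\} \cong G_2/N_2$, forcing $N_2 = G_2$, so $f(K) = G_1 \times G_2$ and we are done. Otherwise $N_1 \normal G_1$ is proper, so by the normally Zornian property it sits inside a maximal normal $M \normal G_1$; then $G_1/M$ is a simple quotient of $G_1$, and it is also a quotient of $G_1/N_1 \cong G_2/N_2$, hence a quotient of $G_2$ --- contradicting the hypothesis that no simple quotient of $G_1$ is a quotient of $G_2$.

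The one point that needs a little care --- and which I expect to be the only real obstacle --- is the reduction in part (1) from ``$f(K)$ does not contain $G_1$'' to a clean Goursat situation: one must check that after quotienting $G_1$ by a maximal normal subgroup $M$ chosen to contain $N = f(K) \cap (G_1 \times 1)$, the pushed-forward subgroup $H = \bar f(K)$ genuinely fails to contain $(G_1/M) \times 1$, so that the ``surjective'' branch of Goursat is excluded and we land in the ``graph of an isomorphism'' branch. This is where the normally Zornian hypothesis is essential: without a maximal $M$ we could not guarantee that $G_1/M$ is simple, and the conclusion ``involved in $G_2$'' would have nothing to latch onto. The remaining steps --- unwinding the Goursat graph to read off a simple quotient and matching it against the hypotheses --- are routine. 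I would state part (2) second precisely because it is the easier, more symmetric application and serves as a template for the slightly fussier part (1).
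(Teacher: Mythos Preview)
Your argument is correct. For part~2 you do exactly what the paper does: apply Goursat, then use the normally Zornian hypothesis to pass from $G_1/N_1$ to a simple quotient of $G_1$, which is forced to be a quotient of $G_2$.

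For part~1 your route differs from the paper's. The paper dispatches part~1 in one line by reducing it to part~2: replace $G_2$ by the projection $\pi_2(f(K)) \le G_2$, so that $f$ surjects onto both factors; then ``no simple quotient of $G_1$ is involved in $G_2$'' implies in particular ``no simple quotient of $G_1$ is a quotient of $\pi_2(f(K))$'', and part~2 applies. Your direct argument---intersecting $f(K)$ with $G_1 \times \{1\}$, enlarging to a maximal normal $M$, quotienting, and then invoking Goursat---works, but essentially reproves the content of part~2 inside part~1. One small sloppiness to fix: when you write ``apply \Lem{l:goursat} to $H \le (G_1/M) \times G_2$'', Goursat as stated requires surjection onto \emph{both} factors, which you have not arranged on the $G_2$ side. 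You implicitly repair this in the next sentence by passing to ``$\bar f(K)$'s second-coordinate image'', but that replacement should be made before invoking Goursat (and then $N_2$ is normal in that image, not in all of $G_2$; also $H$ need not literally be a graph, only $H/(\{1\}\times N_2)$ is). Once you make this explicit, you have in effect carried out the paper's reduction of part~1 to part~2 anyway---just with the Zornian step taken first rather than last.
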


\begin{proof} Case 1 reduces to case 2, since we can replace $G_2$ by the
projection of $f(K)$ in $G_2$.  In case 2, \Lem{l:goursat} yields isomorphic
quotients $G_1/N_1 \cong G_2/N_2$.  Since $G_1$ is normally Zornian, we
may further quotient $G_1/N_1$ to produce a simple quotient $Q$, and we
can quotient $G_2/N_2$ correspondingly.
\end{proof}

Finally, we have a lemma to calculate the simple quotients of a direct
product of groups.

\begin{lemma} If
\[ f:G_1 \times G_2 \times \cdots \times G_n \onto Q \]
is a group homomorphism from a direct product  to a non-abelian simple
quotient, then it factors through a quotient map $f_i:G_i \to Q$ for a
single value of $i$.
\label{l:prodquo} \end{lemma}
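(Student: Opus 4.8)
The plan is to leverage the fact that $Q$ is non-abelian simple, so in particular $Q$ is perfect, and to show that all but one of the factors must die under $f$. First I would set $K = G_1 \times G_2 \times \cdots \times G_n$ and, for each $i$, let $K_i \leq K$ be the image of the $i$-th factor inclusion $G_i \into K$, so that $K_i \normaleq K$ and $K$ is the internal direct product of the $K_i$. Since the $K_i$ pairwise commute, their images $f(K_i) \leq Q$ also pairwise commute, and together they generate $f(K) = Q$. So it suffices to prove the following: if $H_1, \dots, H_n$ are pairwise-commuting normal subgroups of a non-abelian simple group $Q$ that together generate $Q$, then exactly one $H_i$ equals $Q$ and the rest are trivial.

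Next I would prove that claim. Each $H_i$ is normal in $Q$ (being generated by a normal subgroup's image, or more directly: $H_i$ is normalized by every $f(K_j)$ for $j \ne i$ since those commute with it, and by $f(K_i)$ trivially, hence by all of $Q$). Since $Q$ is simple, each $H_i$ is either trivial or all of $Q$. Suppose two of them, say $H_1$ and $H_2$, both equal $Q$. Then $[H_1, H_2] = 1$ forces $[Q,Q] = 1$, i.e.\ $Q$ is abelian, contradicting the hypothesis. So at most one $H_i$ is nontrivial; and since they generate $Q \ne 1$, at least one is nontrivial. Thus exactly one $H_i = Q$, say $H_1$, and $H_j = 1$ for $j \ge 2$. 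This means $f$ kills $K_2, \dots, K_n$, so $f$ factors through the projection $K \onto K_1 \cong G_1$, and the induced map $f_1 : G_1 \to Q$ is the desired quotient map (it is surjective because $f$ is).

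I do not expect a serious obstacle here; the only point requiring a little care is the normality of the $f(K_i)$ in $Q$, which follows cleanly from the commuting relations together with the fact that the $f(K_i)$ jointly generate $Q$. An alternative, equally short route is to induct on $n$ using \Lem{l:prodquo}'s own statement for $n=2$: write $G_1 \times \cdots \times G_n = G_1 \times (G_2 \times \cdots \times G_n)$, apply the $n=2$ case to see $f$ factors through one of the two factors, and recurse. The $n=2$ base case is exactly the commuting-subgroups argument above specialized to two factors. I would present whichever is cleaner in the surrounding context; the direct commuting-subgroups argument is self-contained and short, so I lean toward that.
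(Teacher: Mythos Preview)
Your proposal is correct and follows essentially the same approach as the paper: both arguments observe that the images $f(G_i)$ pairwise commute, hence are normal in $Q$, hence each is trivial or all of $Q$ by simplicity, and two of them cannot both equal $Q$ lest $Q$ be abelian. The paper reduces to the case $n=2$ by induction first, whereas you handle general $n$ directly (and mention the inductive route as an alternative); this is a cosmetic difference only.
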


\begin{proof} The lemma clearly reduces to the case $n=2$ by induction.  If
\[ f:G_1 \times G_2 \onto Q \]
is a simple quotient, then $f(G_1)$ and $f(G_2)$ commute with each other,
so they are normal subgroups of the group that they generate, which by
hypothesis is $Q$.  So each of $f(G_1)$ and $f(G_2)$ is either trivial
or equals $Q$.  Since $Q$ is non-commutative, then $f(G_1)$ and $f(G_2)$
cannot both be $Q$, again because they commute with each other.  Thus one
of $G_1$ and $G_2$ is in the kernel of $f$, and $f$ factors through a
quotient of the other one.
\end{proof}

\subsection{Integer symplectic groups}

Recall that for any integer $g \ge 1$ and any commutative ring $A$,
there is an integer symplectic group $\Sp(2g,A)$, by definition the set of
automorphisms of the free $A$-module $A^{2g}$ that preserves a symplectic
inner product.  Likewise the projective symplectic group $\PSp(2g,A)$
is the quotient of $\Sp(2g,A)$ by its center (which is trivial in
characteristic 2 and consists of $\pm I$ otherwise).  For each prime
$p$ and each $g \ge 1$, the group $\PSp(2g,\Z/p)$ is a finite simple
group, except for $\PSp(2,\Z/2)$, $\PSp(2,\Z/3)$, and $\PSp(4,\Z/2)$
\cite[Thm. 11.1.2]{Carter:simple}.  Moreover, $\PSp(2g,\Z/p)$ is never
isomorphic to an alternating group when $g \ge 2$ (because it has the
wrong cardinality).

We want to apply \Lem{l:zorn} to the symplectic group $\Sp(2g,\Z)$,
since it is the quotient of the mapping class group $\MCG_*(\Sigma_g)$
by the Torelli group $\Tor_*(\Sigma_g)$.  To this end, we can
describe its simple quotients when $g \ge 3$.

\begin{lemma} If $g \ge 3$, then the simple quotients of $\Sp(2g,\Z)$
are all of the form $\PSp(2g,\Z/p)$, where $p$ is prime and the quotient
map is induced by the ring homomorphism from $\Z$ to $\Z/p$.
\label{l:symplectic} \end{lemma}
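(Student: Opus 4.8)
The plan is to combine the congruence subgroup property of $\Sp(2g,\Z)$, the Chinese remainder theorem together with \Lem{l:prodquo}, and the elementary structure of the finite symplectic groups $\Sp(2g,\Z/p^e)$.

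First I would reduce to finite simple quotients. Since $g\ge 3$, the group $\Sp(2g,\Z)$ is perfect, so any simple quotient $Q$ is non-abelian. It is moreover finite: if a surjection $q\colon\Sp(2g,\Z)\onto Q$ had infinite image, then $\ker q$ would be a proper normal subgroup of infinite index, hence finite and central by Margulis's normal subgroup theorem (which applies because $g\ge 2$), hence contained in $\{\pm I\}$; but then $Q$ would be $\Sp(2g,\Z)$ or $\PSp(2g,\Z)$, and neither of these is simple, since $\Sp(2g,\Z)$ has $\{\pm I\}$ as a proper normal subgroup and both admit proper finite congruence quotients. (In fact only finite simple quotients are needed for the later application to \Lem{l:zorn}, since the groups appearing there are finite.)

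Next I would invoke the congruence subgroup property of $\Sp(2g,\Z)$ for $g\ge 2$ (Bass--Milnor--Serre): the finite-index subgroup $\ker q$ contains a principal congruence subgroup $\Gamma(n)=\ker\bigl(\Sp(2g,\Z)\onto\Sp(2g,\Z/n)\bigr)$ for some $n\ge 1$, so $Q$ is a quotient of $\Sp(2g,\Z/n)$. Factoring $n=\prod_i p_i^{e_i}$ and applying the Chinese remainder theorem gives $\Sp(2g,\Z/n)\cong\prod_i\Sp(2g,\Z/p_i^{e_i})$, and since $Q$ is non-abelian simple, \Lem{l:prodquo} forces the surjection onto $Q$ to factor through a single factor $\Sp(2g,\Z/p^e)$. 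The kernel of the reduction $\Sp(2g,\Z/p^e)\onto\Sp(2g,\Z/p)$ is a normal $p$-subgroup, in particular solvable, so its image in $Q$ is a normal solvable subgroup of the non-abelian simple group $Q$ and hence trivial; thus $Q$ is a quotient of $\Sp(2g,\Z/p)$.

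Finally I would use that $g\ge 3$ gives $2g\ge 6$, avoiding the three exceptional cases, so $\PSp(2g,\Z/p)$ is simple by \cite[Thm.~11.1.2]{Carter:simple} and $\Sp(2g,\Z/p)$ is perfect; being a perfect central extension of the simple group $\PSp(2g,\Z/p)$, the group $\Sp(2g,\Z/p)$ is quasisimple, so its proper normal subgroups are all central and its unique simple quotient is $\PSp(2g,\Z/p)$. Therefore $Q\cong\PSp(2g,\Z/p)$, and unwinding the chain of maps shows that $q$ is the reduction $\Sp(2g,\Z)\onto\Sp(2g,\Z/p)$ induced by $\Z\to\Z/p$ followed by the canonical projection onto $\PSp(2g,\Z/p)$. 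I expect the only genuine content here to be imported: the congruence subgroup property and Margulis's theorem are the substantive inputs, while the Chinese remainder decomposition, \Lem{l:prodquo}, and the solvability of the mod-$p$ congruence kernel are routine, so the main care needed is in invoking these standard facts correctly and in checking that $g\ge 3$ rules out the small exceptional symplectic groups.
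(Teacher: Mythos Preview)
Your argument is correct and follows essentially the same strategy as the paper: perfectness of $\Sp(2g,\Z)$ for $g\ge 3$, Margulis's normal subgroup theorem to force finiteness, the congruence subgroup property to reduce to $\Sp(2g,\Z/n)$, and then a finite-group analysis to pin down the simple quotients as $\PSp(2g,\Z/p)$. The only difference is packaging in the last step: the paper applies \Cor{c:sjoint} to the joint map $\PSp(2g,\Z/n)\to\prod_{p\mid n}\PSp(2g,\Z/p)$ and observes that its kernel is nilpotent, whereas you unpack this via the Chinese remainder theorem, \Lem{l:prodquo}, the $p$-group congruence kernel, and quasisimplicity of $\Sp(2g,\Z/p)$---the same content made more explicit. (A minor bibliographic point: the paper attributes the congruence subgroup property for $\Sp$ to Mennicke and Bass--Lazard--Serre rather than Bass--Milnor--Serre.)
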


As the proof will indicate, \Lem{l:symplectic} is a mutual corollary of
two important results due to others:  the congruence subgroup property of
Mennicke and Bass-Lazard-Serre, and the Margulis normal subgroup theorem.

Note that the finite simple quotients of $\Sp(4,\Z)$ are only slightly
different.  The best way to repair the result in this case is to replace
both $\Sp(4,\Z)$ and $\Sp(4,\Z/2)$ by their commutator subgroups of index 2.
Meanwhile given the well-known fact that $\PSp(2,\Z) \cong C_2 * C_3$,
any simple group generated by an involution and an element of order 3 is
a simple quotient of $\Sp(2,\Z)$, and this is a very weak restriction.
However, we only need \Lem{l:symplectic} for large $g$.

\begin{proof} We note first that $\Sp(2g,\Z)$ is a perfect group when
$g \ge 3$, so every possible simple quotient is non-abelian, and every
such quotient is also a quotient of $\PSp(2g,\Z)$.  It is a special
case of the Margulis normal subgroup theorem \cite{Margulis:discrete}
that $\PSp(2g,\Z)$ is \emph{just infinite} for $g \ge 2$, meaning that
all quotient groups are finite.   Meanwhile, a theorem of Mennicke and
Bass-Lazard-Serre \cite{Mennicke:zur,BLS:fini} says that $\Sp(2g,\Z)$ has
the \emph{congruence subgroup property}, meaning that all finite quotients
factor through $\Sp(2g,\Z/n)$ for some integer $n > 1$.  Every finite
quotient of $\PSp(2g,\Z)$ likewise factors through $\PSp(2g,\Z/n)$, so we
only have to find the simple quotients of $\PSp(2g,\Z/n)$.

Clearly if a prime $p$ divides $n$, then $\PSp(2g,\Z/p)$ is a simple
quotient of $\PSp(2g,\Z/n)$.  We claim that there are no others.  Let $N$
be the kernel of the joint homomorphism
\[f:\PSp(2g,\Z/n) \to \prod_{p|n \text{ prime}} \PSp(2g,\Z/p). \]
If $\PSp(2g,\Z/n)$ had another simple quotient, necessarily non-abelian, then
by \Cor{c:sjoint}, it would also be a simple quotient of $N$.  It is easy
to check that $N$ is nilpotent, so all of its simple quotients are abelian.
\end{proof}

\subsection{Rubik groups}
\label{ss:rubik}

Recall the notation that $G' = [G,G]$ is the commutator subgroup of a
group $G$.

If $G$ is a group and $X$ is a $G$-set, then we define the $G$-set
symmetric group $\Sym_G(X)$ to be the group of permutations of $X$ that
commute with the action of $G$.  (Equivalently, $\Sym_G(X)$ is the group
of automorphisms of $X$ as a $G$-set.)  In the case that there are only
finitely many orbits, we define the \emph{Rubik group} $\Rub_G(X)$ to be
the commutator subgroup $\Sym_G(X)'$.  (For instance, the actual Rubik's
Cube group has a subgroup of index two of the form $\Rub_G(X)$, where $G =
C_6$ acts on a set $X$ with 12 orbits of order 2 and 8 orbits of order 3.)

If every $G$-orbit of $X$ is free and $X/G$ has $n$ elements, then we can
recognize $\Sym_G(X)$ as the restricted wreath product
\[ \Sym_G(X) \cong G \wr_{X/G} \Sym(X/G) \cong G \wr_n \Sym(n). \]
We introduce the more explicit notation
\begin{align*}
\Sym(n,G) &\defeq G \wr_n \Sym(n) \\
\Alt(n,G) &\defeq G \wr_n \Alt(n) \\
\Rub(n,G) &\defeq \Sym(n,G)'.
\end{align*}
We can describe $\Rub(n,G)$ as follows.  Let $G_\ab$ be the abelianization
of $G$, and define a map $\sigma:G^n \to G_\ab$ by first abelianizing $G^n$
and then multiplying the $n$ components in any order. Let $\AD(n,G) \le G^n$
($\AD$ as in ``anti-diagonal") be the kernel of $\sigma$.  Then:

\begin{proposition} For any integer $n > 1$ and any group $G$, the commutator
subgroup of $\Sym(n,G)$ is given by
\[ \Rub(n,G) = \AD(n,G) \rtimes \Alt(n). \]
\end{proposition}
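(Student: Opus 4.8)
The plan is to identify $\Rub(n,G)$ as a subgroup of $\Sym(n,G) = G \wr_n \Sym(n) = G^n \rtimes \Sym(n)$ by computing the commutator subgroup directly. First I would observe that, since $\Sym(n)$ maps onto $\Sym(n)$ via the wreath product quotient $\Sym(n,G) \onto \Sym(n)$, and $\Sym(n)' = \Alt(n)$ for $n \ge 2$, the image of $\Rub(n,G)$ in $\Sym(n)$ is contained in $\Alt(n)$; conversely the $\Alt(n)$ sitting inside $\Sym(n,G)$ as permutation matrices is perfect for $n \ge 5$ (and one checks $n = 2,3,4$ by hand — for $n \le 4$ the commutators of transposition-like elements still generate enough), so $\Alt(n) \le \Rub(n,G)$. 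Thus $\Rub(n,G) = (\Rub(n,G) \cap G^n) \rtimes \Alt(n)$, and it remains only to show $\Rub(n,G) \cap G^n = \AD(n,G)$.

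Next I would compute $\Rub(n,G) \cap G^n$. Since $\Sym(n,G)^{\ab}$ is the abelianization of a semidirect product, I would compute it explicitly: abelianizing $G^n \rtimes \Sym(n)$ kills $\Sym(n)'=\Alt(n)$, identifies the $n$ coordinate copies of $G$ with each other (conjugation by transpositions permutes them), and then abelianizes the diagonal copy of $G$, giving $\Sym(n,G)^{\ab} \cong G_\ab \times (\Sym(n))^{\ab} \cong G_\ab \times C_2$ (or $G_\ab$ if $n=1$). Under $\Sym(n,G) \onto \Sym(n,G)^{\ab}$, the restriction to $G^n$ is exactly the map $(g_1,\dots,g_n) \mapsto$ (image of $g_1 g_2 \cdots g_n$ in $G_\ab$), i.e. the map $\sigma$ in the statement. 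Therefore $G^n \cap \Rub(n,G) = G^n \cap \ker(\Sym(n,G) \onto \Sym(n,G)^{\ab})$ contains $\AD(n,G) = \ker\sigma$. For the reverse containment — that every element of $\AD(n,G)$ is actually a product of commutators in $\Sym(n,G)$, not merely in the kernel of abelianization, which a priori could be larger if $\Sym(n,G)$ is not "efficiently perfect" on that piece — I would argue directly: commutators $[(1,\dots,1),(g,1,\dots,1,g^{-1},1,\dots)]$-type elements and conjugates by $\Alt(n)$ of elements $(g, g^{-1}, 1, \dots, 1)$ generate all of $\AD(n,G)$. Concretely, $[(h,1,\dots),\tau]$ for $\tau$ a transposition in $\Sym(n)$ produces $(h, h^{-1}, 1, \dots, 1)$-shaped elements (placed on any pair of coordinates via $\Alt(n)$-conjugation when $n \ge 3$), and commutators within a single coordinate copy of $G$ produce $(g', 1, \dots, 1)$ for $g' \in G'$; together with $n \ge 2$ these generate $\{(g_1,\dots,g_n) : g_1\cdots g_n \in G'\}$, which one checks equals $\AD(n,G)$.

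Combining, $\Rub(n,G) = \AD(n,G) \rtimes \Alt(n)$, and one should double-check the semidirect product decomposition is internal and consistent (the $\Alt(n)$ complement normalizes $\AD(n,G)$ since $\Sym(n)$ permutes coordinates and $\sigma$ is symmetric in the coordinates up to $G_\ab$-commutativity). I would also handle the small cases $n = 2, 3, 4$ explicitly: for $n=2$, $\Alt(2)$ is trivial so the claim is $\Rub(2,G) = \AD(2,G)$, and one checks $\Sym(2,G) = G \wr_2 C_2$ has commutator subgroup $\{(g,h) : gh \in G'\}\cdot\{1\} = \AD(2,G)$ directly; for $n=3,4$ one uses that $\Alt(3)$ is cyclic of order $3$ (so not perfect) and $\Alt(4)$ is not perfect either, which forces a slightly more careful argument — but since $\Alt(3), \Alt(4)$ are generated by $3$-cycles and a $3$-cycle in $\Sym(n)$ is a commutator of transpositions, hence lifts to a commutator in $\Sym(n,G)$, the copy of $\Alt(n)$ still lies in $\Rub(n,G)$, and the previous coordinate-wise argument still produces all of $\AD(n,G)$.

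The main obstacle I expect is the reverse containment $\AD(n,G) \subseteq \Rub(n,G)$ — showing the kernel of abelianization is actually generated by commutators rather than just equal to it as a set-theoretic observation. The issue is that $\AD(n,G)$ contains elements like $(g, g^{-1}, 1, \dots, 1)$ for arbitrary $g \in G$ (not just $g \in G'$), and one must exhibit these as genuine commutators in the wreath product; the clean way is the identity $(g, g^{-1}, 1, \dots) = [(g,1,\dots,1),\, \tau_{12}]$ where $\tau_{12} \in \Sym(n) \le \Sym(n,G)$ is the transposition, combined with the fact that $\Alt(n)$-conjugates of these (for $n \ge 3$) move the support to any coordinate pair, and that such elements together with $(G')^n$-type commutators generate all of $\{\vec g : \prod g_i \in G'\} = \AD(n,G)$. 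For $n = 2$ this last step needs the separate direct verification mentioned above since there is no room to move supports around.
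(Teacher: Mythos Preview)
Your proposal is correct and follows essentially the same approach as the paper: both identify the abelian quotient $G_\ab \times C_2$ (giving $\Rub(n,G) \subseteq \AD(n,G) \rtimes \Alt(n)$) and then exhibit the generators of $\AD(n,G) \rtimes \Alt(n)$ as explicit commutators, in particular via the identity $(g,g^{-1},1,\dots,1) = [(g,1,\dots,1),(1\ 2)]$ and the observation $\Alt(n) = \Sym(n)' \le \Rub(n,G)$. Your ``main obstacle'' is a phantom, however: the commutator subgroup is by definition the kernel of the abelianization map, so once you have correctly computed $\Sym(n,G)^{\ab} = G_\ab \times C_2$ (as you did) the proof is already complete, and your separate case analysis for $n \le 4$ is likewise unnecessary since $\Alt(n) = [\Sym(n),\Sym(n)]$ for all $n \ge 2$.
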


\begin{proof} It is easy to check that $(\ker \sigma) \rtimes \Alt(n)$
is a normal subgroup of $\Sym(n,G)$ and that the quotient is the abelian
group $G_\ab \times C_2$. This shows that
\[ \AD(n,G) \rtimes \Alt(n) \supseteq \Rub(n,G). \]

To check the opposite inclusion, note that $\AD(n,G) \rtimes \Alt(n)$
is generated by the union of $(G')^n$, $\Alt(n) = \Sym(n)'$, and all
permutations of elements of the form
\[ (g,g^{-1},1,\dots,1) \in G^n.\]
Clearly $\Rub(n,G)$ contains the former two subsets.  Since
\[ (g,g^{-1},1,\dots,1) = [(g,1,1,\dots,1),(1 \ 2)]\]
(and similarly for other permutations),  we see
\[ \AD(n,G) \rtimes \Alt(n) \le \Rub(n,G).\]
We conclude with the desired equality.
\end{proof}

The main result of this section is a condition on a group homomorphism
to $\Rub(n,G)$ that guarantees that it is surjective.  We say that a group
homomorphism
\[ f:K \to \Sym(n,G)\]
is \emph{$G$-set $k$-transitive} if it acts transitively on ordered lists
of $k$ elements that all lie in distinct $G$-orbits.

\begin{theorem} Let $G$ be a group and let $n \ge 7$ be an integer such
that $\Alt(n-2)$ is not a quotient of $G$.  Suppose that a homomorphism
\[ f:K \to \Rub(n,G) \]
is $G$-set 2-transitive and that its composition with the projection
$\Rub(n,G) \to \Alt(n)$ is surjective.  Then $f$ is surjective.
\label{th:rubik} \end{theorem}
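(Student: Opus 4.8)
\textbf{Proof proposal for \Thm{th:rubik}.}

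The plan is to identify a normal subgroup generated by the image of $f$ inside $\Rub(n,G) = \AD(n,G) \rtimes \Alt(n)$, using the hypotheses to bootstrap from the known surjection onto the $\Alt(n)$ quotient up to full surjectivity. First I would use the surjectivity onto $\Alt(n)$ to pull back, and combine it with $G$-set $2$-transitivity: picking an element of $K$ whose image has a prescribed nontrivial permutation part together with the transitivity on pairs of distinct orbits, I expect to produce, inside $f(K)$, elements of $\AD(n,G)$ supported on just two coordinates of a prescribed shape, namely conjugates of $(g,g^{-1},1,\dots,1)$. The key point is that such ``antidiagonal'' elements, together with $\Alt(n)$, generate all of $\AD(n,G) \rtimes \Alt(n)$ (this is exactly the generating set used in the proof of the preceding Proposition), so it suffices to show $f(K)$ contains a single such element for one pair of coordinates — transitivity and the $\Alt(n)$-action then spread it to all pairs and hence generate everything.

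The mechanism to extract an antidiagonal element would be a commutator trick: take $k \in K$ with permutation part a $3$-cycle (or similar small-support element) and another element $k'$ whose $\AD$-part is concentrated near those coordinates, and compute $[k,k']$ to cancel the permutation parts and leave a controlled element of $\AD(n,G)$. The role of $n \ge 7$ is to give enough ``room'': we need at least two coordinates outside the support of whatever permutations we are manipulating, so that the $G$-set $2$-transitivity can act on a pair of orbits disjoint from the coordinates being permuted, and so that Lemma~\ref{l:alt}-type arguments on the $\Alt(n)$ side have $n-2 \ge 5$ letters to work with. I would phrase the coordinate-wise conclusion as: $f(K)$ contains $(G')^n$ and contains the antidiagonal generators, hence contains $\AD(n,G) \rtimes \Alt(n)$; here the containment of $(G')^n$ follows because $(g,g^{-1},1,\dots,1)$-type elements and their $\Alt(n)$-translates have commutators generating each $G'$ factor, just as in the Proposition's proof.

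The main obstacle I anticipate is controlling the ``diagonal'' or mixing behavior: $G$-set $2$-transitivity only tells us $K$ permutes pairs of orbits transitively, but the stabilizer of such a pair could act on the two $G$-factors through the \emph{same} isomorphism $G \cong G$ (a Goursat-type graph), so a naive commutator might land in a diagonal copy of $G$ rather than an antidiagonal one. This is precisely where the hypothesis that $\Alt(n-2)$ is not a quotient of $G$ must enter: it should be used (via \Lem{l:zorn} or \Lem{l:prodquo}, applied to the projection of $f(K)$ onto the $G$-coordinates versus the $\Alt(n-2)$ acting on the remaining coordinates) to rule out the bad correlated cases, forcing the $G$-part and the large-alternating-part to decouple. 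So the heart of the argument is a two-step application of Goursat's lemma and the normally-Zornian machinery: first decouple the $G^n$ part from $\Alt(n)$, then decouple the individual $G$ factors from one another, at which point the antidiagonal generators are available and the Proposition finishes the job.
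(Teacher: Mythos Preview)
Your outline has the right destination and correctly senses that \Lem{l:zorn} and the $\Alt(n-2)$ hypothesis are the decoupling tools, but it misidentifies where the obstacle lies and omits the key lemma for the non-abelian case. The ``Goursat-type graph'' worry about the two $G$-factors is not the issue: $G$-set $2$-transitivity already forces the stabilizer of two orbits to surject onto $G \times G$, with no diagonal risk. The genuine decoupling is between $G^2$ and $\Alt(n-2)$: restrict $f(K)$ to the preimage of the pointwise stabilizer $\Alt(n-2) \le \Alt(n)$ of two coordinates, project to $G^2 \times \Alt(n-2)$ (now a direct product, so \Lem{l:zorn} applies; your ``decouple $G^n$ from $\Alt(n)$'' does not parse since $\Rub(n,G)$ is only semidirect), and use that $\Alt(n-2)$ is not a quotient of $G^2$ via \Lem{l:prodquo}. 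The conclusion is that $D \defeq f(K) \cap G^n$ is $2$-locally surjective --- the correct first milestone.

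The more serious gap is the commutator mechanism when $G$ is non-abelian. Two-local surjectivity only gives elements of $D$ of shape $(g_1,g_2,*,\dots,*)$ with uncontrolled entries elsewhere; commutators with lifts of small-support permutations conjugate each coordinate by uncontrolled group elements, so the garbage does not cancel, and you cannot yet conjugate by $\Alt(n)$ inside $f(K)$ since you only know $f(K)$ surjects onto it. The paper's resolution, absent from your plan, is to reduce to the abelian case first: when $G$ is abelian the garbage genuinely cancels, and explicit commutator identities (this is where $n \ge 7$ provides room) give $D = \AD(n,G)$. For general $G$, apply this to $G_\ab$ to obtain $(n-1)$-local surjectivity of $D_\ab$; combined with $2$-local surjectivity of $D$, \Lem{l:ribet} (Ribet--Serre) then forces $D \ge (G')^n$ and hence $D = \AD(n,G)$. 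That lemma is the missing load-bearing step. Only afterward, in the quotient $\Alt(n,G)/D \cong G_\ab \times \Alt(n)$, does a final application of \Lem{l:zorn} lift $\Alt(n)$ into $f(K)$.
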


\begin{proof}[Proof of \Thm{th:rubik}] In the proof we will mix Cartesian
product notation for elements of $G^n$ with cycle notation for permutations.
The proof is divided into three steps.

Step 1: We let $H = f(K)$, and we consider its normal subgroup
\[ D \defeq H \cap G^n. \]
We claim that $D$ is $2$-locally surjective.  To this end, we look at the
subgroup $\Alt(n-2) \le \Alt(n)$ that fixes the last two letters (say).
Then there is a projection
\[ \pi:G^n \rtimes \Alt(n-2) \to G^2 \times \Alt(n-2) \]
given by retaining only the last two coordinates of $g \in G^n$.  We let
\[ J = \pi(H \cap (G^n \rtimes \Alt(n-2))). \]
Since $H$ is $G$-set 2-transitive, the group $J$ surjects onto $G^2$;
since $H$ surjects onto $\Alt(n)$, $J$ surjects onto $\Alt(n-2)$.  Thus we
can apply \Lem{l:zorn} to the inclusion
\[ J \leq G^2 \times \Alt(n-2). \]
Since $\Alt(n-2)$ is not a quotient of $G$ and therefore not $G^2$ either
(by \Lem{l:prodquo}), we learn that
\[ J = G^2 \times \Alt(n-2) \]
and that 
\[ G^2 \leq H \cap (G^n \rtimes \Alt(n-2)). \]
So the group $D = H \cap G^n$ surjects onto the last two coordinates
of $G^n$.  Since we can repeat the argument for any two coordinates, $D$
is 2-locally surjective.

Step 2: Suppose that $G$ is abelian.  Then $D$ is a subgroup of $G^n$
which is 2-locally surjective.  Since $G^n$ is abelian, conjugation of
elements of $D$ by elements of $H$ that surject onto $\Alt(n)$ coincides
with conjugation by $\Alt(n)$; thus $D$ is $\Alt(n)$-invariant.  By step 1,
for each $g_1 \in G$, there exists an element
\[ d_1 \defeq (g_1,1,g_3,g_4,\dots,g_n) \in D. \]
We now form a commutator with elements in $\Alt(n)$ to obtain
\begin{align*}
d_2 &\defeq [d_1,(1\;2)(3\;4)]
    = (g_1,g_1^{-1},g_3g_4^{-1},g_3^{-1}g_4,1,\dots,1) \in D. \\
d_3 &\defeq [d_2,(1 \ 2 \ 5)(3 \ 4)(6 \ 7)]
    = (g,1,1,1,g^{-1},1,\dots,1) \in D.
\end{align*}
The $\Alt(n)$-orbit of $d_3$ generates $\AD(n,G)$, thus $D = \AD(n,G)$.

Step 3: In the general case, step 2 tells us that $D_\ab = \AD(n,G_\ab)$
is $(n-1)$-locally surjective.  This together with step 1 tells us that $D
\le G^n$ satisfies the hypothesis of \Lem{l:ribet}, which tells us that $D
= \AD(n,G)$.  It remains only to show that $\Alt(n) \le H$.  It suffices
to show that $H/D$ contains (indeed is) $\Alt(n)$ in the quotient group
\[ \Alt(n,G)/D \cong (G^n/D) \rtimes \Alt(n)
    \cong (G^n/D) \times \Alt(n). \]
Now let $D_0 = (G^n \cap H)/D$, so that $H$ surjects onto $D_0 \times
\Alt(n)$.  Since $\Alt(n)$ is not a quotient of $D_0$ (for one reason,
because $G^n/D$ is abelian), we can thus apply \Lem{l:zorn} to conclude
that $H/D$ contains $\Alt(n)$.
\end{proof}

\begin{lemma} If $G$ is a group and $n \ge 5$, then $\Rub(n,G)/\AD(n,G)
\cong \Alt(n)$ is the unique simple quotient of $\Rub(n,G)$.
\label{l:rubikquo} \end{lemma}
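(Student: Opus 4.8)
The plan is to analyze the quotient structure of $\Rub(n,G) = \AD(n,G) \rtimes \Alt(n)$ using the semidirect product decomposition and the simple-quotient lemmas already established. First I would observe that $\Alt(n)$ is indeed a quotient of $\Rub(n,G)$: the projection $\Rub(n,G) \to \Alt(n)$ with kernel $\AD(n,G)$ exhibits it, and $\Alt(n)$ is simple and non-abelian for $n \ge 5$. So it remains to show uniqueness: any non-abelian simple quotient $q:\Rub(n,G) \onto Q$ must factor through this projection.

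The key step is to show that $\AD(n,G)$ lies in the kernel of any such $q$. I would argue that $q(\AD(n,G))$ is a normal subgroup of $Q$ (since $\AD(n,G) \normaleq \Rub(n,G)$), hence either trivial or all of $Q$. Suppose for contradiction it is all of $Q$. Then $Q$ is a quotient of $\AD(n,G) \le G^n$, so it is a quotient of a subgroup of $G^n$; more precisely, restricting $q$ to $\AD(n,G)$ gives a surjection onto $Q$, and I want to rule this out. The cleanest route is to note that $\AD(n,G)$ contains $(G')^n$ and is itself a subgroup of $G^n$, so by \Lem{l:prodquo}-type reasoning any non-abelian simple quotient of $\AD(n,G)$ would have to come from a single coordinate; but more carefully, since $\AD(n,G)$ is normal in $\Rub(n,G)$ and $\Alt(n)$ acts on it by permuting coordinates, the kernel of $q|_{\AD(n,G)}$ would have to be $\Alt(n)$-invariant, which forces $q(\AD(n,G))$ to be a quotient on which $\Alt(n)$ acts, pinning down its structure enough to contradict simplicity — I expect the quotient $G^n \to G^n/\AD(n,G) \cong G_\ab$ combined with the coordinate-permutation symmetry to show every simple quotient of $\AD(n,G)$ compatible with the $\Alt(n)$-action is abelian.

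Once $\AD(n,G) \le \ker q$, the map $q$ factors through $\Rub(n,G)/\AD(n,G) \cong \Alt(n)$, and since $Q$ is a non-abelian simple quotient of the simple group $\Alt(n)$ (for $n \ge 5$), we get $Q \cong \Alt(n)$ and $q$ is (up to isomorphism) the projection. This establishes both existence and uniqueness.

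The main obstacle will be the second paragraph: ruling out that $q$ restricted to $\AD(n,G)$ surjects onto $Q$. The subtlety is that $\AD(n,G)$ is not simply $(G')^n$ — it can be strictly larger, containing antidiagonal elements $(g,g^{-1},1,\dots,1)$ — so a naive application of \Lem{l:prodquo} does not immediately apply. I expect the resolution to use that $\AD(n,G)$ sits in the exact sequence $1 \to \AD(n,G) \to G^n \to G_\ab \to 1$ together with the fact that $(G')^n \le \AD(n,G)$, so that $\AD(n,G)/(G')^n$ is abelian; hence any non-abelian simple quotient of $\AD(n,G)$ is a quotient of $(G')^n$, and then \Lem{l:prodquo} applies to show it comes from a single factor $G'$. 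But a single-factor quotient is not $\Alt(n)$-invariant (the $\Alt(n)$-action permutes the $n$ copies transitively for $n \ge 5$, so no single coordinate's quotient can be preserved unless it is trivial), giving the needed contradiction.
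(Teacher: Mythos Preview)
Your approach for ruling out a \emph{non-abelian} simple quotient other than $\Alt(n)$ is essentially the paper's: restrict $q$ to $\AD(n,G)$, pass to $(G')^n$ using that $\AD(n,G)/(G')^n$ is abelian, apply \Lem{l:prodquo} to get a single-factor quotient, and then observe that the kernel cannot be $\Alt(n)$-invariant. Your direct argument that $q(\AD(n,G))$ is normal in $Q$ and hence trivial or all of $Q$ is in fact slightly cleaner than the paper's route through \Cor{c:sjoint}, which needs perfectness of $\Rub(n,G)$ as input.

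However, there is a genuine gap: you only treat non-abelian simple quotients. The lemma asserts that $\Alt(n)$ is the unique simple quotient, full stop, so you must also exclude abelian simple quotients --- equivalently, show that $\Rub(n,G)$ is perfect. This is not automatic: $\Rub(n,G)$ is defined as the commutator subgroup $\Sym(n,G)'$, but a commutator subgroup need not be perfect. The paper devotes the first half of its proof to this, computing explicit commutators inside $\Rub(n,G)$ to show first that $(G')^n \le \Rub(n,G)'$, then (after reducing to abelian $G$) that $\AD(n,G) \le \Rub(n,G)'$, and finally noting $\Alt(n)$ is perfect. You should add this step; without it your ``uniqueness'' is only uniqueness among non-abelian simple quotients.
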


\begin{proof} We first claim that $\Rub(n,G)$ is a perfect group.  For any
two elements $g,h \in G$, we can take commutators such as
\begin{multline*}
[(g_1,g_1^{-1},1,1,\dots,1), (g_2,1,g_2^{-1},1,\dots,1)] \\
    = ([g_1,g_2],1,1,\dots,1) \in \AD(n,G)',
\end{multline*}
to conclude that
\[ (G')^n = \AD(n,G)' \le \Rub(n,G)'. \]
We can thus quotient $\Rub(n,G)$ by $(G')^n$ and replace $G$ by $G_\ab$,
or equivalently assume that $G$ is abelian.  In this case, we can take
commutators such as
\begin{multline*}
[(g,1,g^{-1},1,1,\dots,1),(1\; 2)(4\; 5)] \\
    = (g,g^{-1},1,1,\dots,1) \in \Rub(n,G)'
\end{multline*}
to conclude that $\AD(n,G) \le \Rub(n,G)'$.  Meanwhile $\Alt(n) \le
\Rub(n,G)'$ because it is a perfect subgroup.  Thus $\Rub(n,G)$ is perfect.

Suppose that
\[ f:\Rub(n,G) \onto Q \]
is a second simple quotient map, necessarily non-abelian.  Then \Cor{c:sjoint}
tells us that $f$ is also surjective when restricted to $\AD(n,G)$.  If $G$
is abelian, then so is $\AD(n,G)$ and this is immediately impossible.
Otherwise we obtain that the restriction of $f$ to $\AD(n,G)' = (G')^n$ is
again surjective, and we can apply \Lem{l:prodquo} to conclude that
$f|_{(G')^n}$ factors through a quotient $h:G' \to Q$ on a single factor.
But then $(\ker f) \cap (G')^n$ would not be invariant under conjugation
by $\Alt(n)$ even though it is the intersection of two normal subgroups
of $\Rub(n,G)$, a contradiction.
\end{proof}

\section{Proof of \Thm{th:main}}
\label{s:proof}

In this section, we will prove \Thm{th:main} in three stages.
In \Sec{ss:zombies}, we define an ad hoc circuit model called $\ZSAT$
in which the alphabet has a group action and also has an unwanted zombie
state $z$.  Despite its contrived features, $\RSAT$ reduces to $\ZSAT$,
which is thus $\shP$-complete.  In \Sec{ss:refine}, we refine \Thm{th:dt}
of Dunfield and Thurston in several ways for our purposes.  Finally in
\Sec{ss:mcg}, we build a homology 3-sphere $M$ from a $\ZSAT$ circuit that
satisfies the requirements of \Thm{th:main}.

\subsection{Zombies}
\label{ss:zombies}

Let $G$ be a non-trivial finite group and let $A$ be an alphabet which
is a $G$-set with a single fixed point $z$, the \emph{zombie symbol},
and otherwise with free orbits.  We choose two $G$-invariant alphabets
$I,F \subsetneq A \setminus \{z\}$, and we assume that
\begin{eq}{e:zineq}
|I|,|F| \ge 2|G| \qquad I \ne F \qquad |A| \ge 2|I \cup F| + 3|G| + 1.
\end{eq}
(The second and third conditions are for convenience rather than necessity.)
With these parameters, we define a planar circuit model that we denote
$\ZSAT_{G,A,I,F}$ that is the same as $\RSAT_{A,I \cup \{z\},F \cup
\{z\}}$ as defined in \Sec{ss:circuits}, \emph{except} that the gate set
is $\Rub_G(A^2)$.  This gate set is not universal in the sense of $\RSAT$
because every gate and thus every circuit is $G$-equivariant.  (One can
show that it is universal for $G$-equivariant circuits by establishing an
analogue of \Lem{l:rev} with the aid of \Thm{th:rubik}, but we will not need
this.)  More explicitly, in the $\ZSAT$ model we consider $G$-equivariant
planar circuits $C$ that are composed of binary gates in $\Rub_G(A^2)$,
and satisfiability is defined by the equation $C(x) = y$ with $x \in
(I \cup \{z\})^n$ and $y \in (F \cup \{z\})^n$.

\begin{lemma} $\shZSAT_{G,A,I,F}$ is almost parsimoniously $\shP$-complete.
More precisely, if $c \in \shP$, then there is an $f \in \FP$ such that
\begin{eq}{e:zk} \shZSAT_{G,A,I,F}(f(x)) = |G|c(x)+1. \end{eq}
\eatline \label{l:zsat} \end{lemma}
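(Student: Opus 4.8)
The plan is to reduce $\RSAT_{A',I',F'}$ to $\ZSAT_{G,A,I,F}$, where $(A',I',F')$ is an alphabet satisfying the hypotheses of \Thm{th:rsat}, so that $\shRSAT_{A',I',F'}$ is parsimoniously $\shP$-hard. The zombie-free part of the $\ZSAT$ alphabet will carry the $\RSAT$ computation, while the $G$-action and the zombie symbol $z$ must be handled so that they contribute exactly the single extra factor of $|G|$ and the additive $+1$ in \eqref{e:zk}. Concretely, I would build the $\ZSAT$ alphabet $A$ from $A'$ by taking $A = (A' \times G) \sqcup \{z\} \sqcup (\text{extra symbols})$, with $G$ acting freely on $A' \times G$ by right translation on the second coordinate and fixing $z$; set $I = I' \times G$ and $F = F' \times G$. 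A reversible circuit $C'$ over $A'$ of width $n$ then lifts to a $G$-equivariant circuit $C$ over $A$: each gate $\alpha' \in \Sym((A')^2)$ is replaced by the gate on $(A' \times G)^2$ that applies $\alpha'$ to the $A'$-coordinates while doing something canonical with the two $G$-coordinates, for instance leaving them fixed, or better, twisting them so that the resulting permutation lies in $\Rub_G(A^2) = \Sym_G(A^2)'$ rather than merely in $\Sym_G(A^2)$. Getting the parity/commutator-subgroup bookkeeping right — so that every gate actually lands in the Rubik group and not just the $G$-set symmetric group — is the first technical point; the proposition computing $\Rub(n,G) = \AD(n,G) \rtimes \Alt(n)$ tells us exactly the obstruction (a sign and a $G_\ab$-valued total product), and we can absorb it using ancilla symbols initialized and finalized appropriately, exactly as in the proof of \Thm{th:rsat}.

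Next I would count solutions. A solution to the lifted circuit $C$ with input in $(I \cup \{z\})^n$ and output in $(F \cup \{z\})^n$ comes in two flavors. The ``honest'' solutions have no zombies in any coordinate at any time: these are inputs in $(I' \times G)^n$ mapping into $(F' \times G)^n$. Because the circuit is $G$-equivariant and (by the twisting above) the induced map on the $G^n$-coordinates is affine-surjective in the appropriate sense, the fiber over each genuine $\RSAT$ solution $y \in (I')^n$ with $C'(y) \in (F')^n$ has size exactly $|G|$ — one can pin this down by noting that the set of honest solutions is a torsor-like set over the group $\AD(n,G) \rtimes (\text{something})$, but the cleanest argument is: the $G$-action on $A$ restricts to a free $G$-action (the diagonal one) on the set of honest solutions, the quotient is exactly the $\RSAT$ solution set, so $\#\{\text{honest solutions}\} = |G| \cdot \shRSAT_{A',I',F'}(x)$. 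The ``zombie'' solutions are the all-$z$ input $(z,z,\dots,z)$ and any that arise from it; since $z$ is a global fixed point and the circuit is built from equivariant gates that must also preserve the decomposition into the free part and $\{z\}^n$ on the nose, the all-zombie input maps to the all-zombie output, which lies in $(F \cup \{z\})^n$, giving exactly one zombie solution, and no mixed input (some coordinates $z$, some in $I$) can be a solution because — here we need to design the gates so that a gate receiving one zombie and one non-zombie input produces a non-zombie, non-finalizing ``warning'' symbol, which then propagates and never finalizes. This warning-propagation design is the heart of the construction and is where the condition $|A| \ge 2|I \cup F| + 3|G| + 1$ is spent: we need enough room for a $G$-orbit of warning symbols disjoint from $I$, $F$, and each other.

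The main obstacle I expect is precisely the simultaneous satisfaction of three constraints on the gate design: (i) every gate must be $G$-equivariant and lie in the commutator subgroup $\Rub_G(A^2)$, not just $\Sym_G(A^2)$; (ii) gates must faithfully simulate the $\RSAT_{A',I',F'}$ circuit on the honest (zombie-free) part, including the initialization/finalization subsets; and (iii) gates must route any appearance of a zombie symbol adjacent to a non-zombie into a permanent non-finalizing warning state, so that the only surviving zombie solution is the global all-$z$ configuration. Reconciling (i) with (iii) is delicate because forcing a permutation to lie in a commutator subgroup reduces our freedom, and the warning mechanism must be implemented while the whole gate stays inside $\Rub_G(A^2)$; I anticipate handling this by adding a bounded number of ancilla wires carrying symbols from a $G$-orbit, exactly parallel to the gate-dilation and uncomputation tricks used in the proof of \Thm{th:rsat}, and invoking \Thm{th:rubik} or the proposition on $\Rub(n,G)$ to certify that the needed permutations exist in the Rubik group. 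Once the gadget gates are in hand, the final count $\shZSAT_{G,A,I,F}(f(x)) = |G| \cdot \shRSAT_{A',I',F'}(x) + 1$ follows by the torsor argument above, and composing with the parsimonious reduction $\shCSAT \to \shRSAT$ from \Thm{th:rsat} (under which the right side becomes $|G| c(x) + 1$ for the given $c \in \shP$, after renaming) completes the proof; membership of $\shZSAT$ in $\shP$ is immediate since a witness is an input word together with the $G$-equivariant circuit, verifiable in polynomial time, so $\shZSAT_{G,A,I,F}$ is $\shP$-complete.
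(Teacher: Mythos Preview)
Your counting argument has a genuine gap: the claim that ``the quotient is exactly the $\RSAT$ solution set'' is false as stated.  If your lifted gates act on the $A'$-coordinates and leave the $G$-coordinates fixed (or even twist them in any $G$-equivariant way), then for each $\RSAT$ solution $y' \in (I')^n$ you get $|G|^n$ honest lifts $(y'_i,g_i)$ with arbitrary $g_i \in G$, and the \emph{diagonal} $G$-action has orbits of size $|G|$, so the quotient has size $|G|^{n-1}$ per $\RSAT$ solution, not $1$.  Your warning mechanism only handles inputs that mix zombies with non-zombies; it does nothing about the $|G|^{n-1}$-fold overcount coming from \emph{misaligned} honest inputs, i.e.\ those whose $G$-shifts $g_1,\dots,g_n$ are not all equal.

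The paper's proof closes exactly this gap with a second use of the warning alphabet.  It fixes a section $A_0$ of the free $G$-orbits and calls a state \emph{aligned} if a single $g \in G$ carries every symbol into $A_0$.  The main gates are designed to preserve each symbol's individual $G$-shift (so alignment and misalignment are both conserved through the computation), and then a \emph{postcomputation} sweep of binary gates $\alpha$ sends any misaligned adjacent pair, as well as any zombie/non-zombie pair, into the warning alphabet $W$.  After this sweep only the all-zombie state and the $|G|$ aligned lifts of each $\RSAT$ solution can finalize, which is what yields $|G|\,c(x)+1$.  Note also that $(A,I,F)$ is given, not built: the paper takes the $\RSAT$ alphabet to be $(I\cup F)/G$ with sub-alphabets $I/G$ and $F/G$, the reverse of the direction you sketched.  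Your concern about forcing gates into $\Rub_G(A^2)$ is handled much more cheaply than with ancillas: the size hypothesis on $A$ leaves enough free $G$-orbits outside $I\cup F\cup W\cup\{z\}$ that any partially specified $G$-equivariant permutation can be completed to an element of $\Rub_G(A^2)$.
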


Equation \eqref{e:zk} has the same form as equation \eqref{e:hq}, and for
an equivalent reason: The input $(z,z,\dots,z)$ trivially satisfies any
$\ZSAT$ circuit (necessarily at both ends), while $G$ acts freely on the
set of other circuit solutions.

\begin{proof} We take the convention that $A$ is a left $G$-set.  We choose
a subset $A_0 \subsetneq A$ that has one representative from each free
$G$-orbit of $A$.  (In other words, $A_0$ is a section of the free orbits.)

We say that a data state $(a_1,a_2,\dots,a_n)$ of a $\ZSAT$ circuit of
width $n$ is \emph{aligned} if it has no zombie symbols and if there is a
single element $g \in G$ such that $ga_i \in A_0$ for all $i$.   The idea
of the proof is to keep zombie symbols unchanged (which is why they are
called zombies) and preserve alignment in the main reduction, and then add
a postcomputation that converts zombies and misaligned symbols into warning
symbols in a separate warning alphabet.  The postcomputation cannot work
if all symbols are zombies, but it can work in all other cases.

More precisely, we let $W \subseteq A \setminus (I \cup F \cup \{z\})$
be a $G$-invariant subalphabet of size $|I \cup F|+2|G|$ which we call the
\emph{warning alphabet}, and we distinguish two symbols $z_1,z_2 \in W$ in
distinct orbits.  Using \Thm{th:rsat} as a lemma, we will reduce a circuit
$C$ in the planar, reversible circuit model $\RSAT_{(I \cup F)/G,I/G,F/G}$
with binary gates to a circuit $D$ in $\ZSAT_{G,A,I,F}$.  To describe the
reduction, we identify each element of $(I \cup F)/G$ with its lift in $A_0$.

We let $D$ have the same width $n$ as $C$.  To make $D$, we convert each
binary gate $\gamma$ of the circuit $C$ in $\RSAT_{(I \cup F)/G,I/G,F/G}$
to a gate $\delta$ in $\ZSAT_{G,A,I,F}$ in sequence.  After all of these
gates, $D$ will also have a postcomputation stage.  Given $\gamma$,
we define $\delta$ as follows:
\begin{enumerate}
\item Of necessity,
\[ \delta(z,z) = (z,z). \]
\item If $a \in I \cup F$, then
\[ \delta(z,a) = (z,a) \qquad \delta(a,z) = (a,z). \]
\item If $a_1,a_2 \in (I \cup F) \cap A_0$, $g_1,g_2 \in G$, and 
\[ \gamma(a_1,a_2) = (b_1,b_2), \]
then
\[ \delta(g_1a_1,g_2a_2) = (g_1b_1,g_2b_2). \]
\item We extend $\delta$ to the rest of $A^2$ so that $\delta \in
\Rub_G(A^2)$.
\end{enumerate}
By cases 1 and 2, zombie symbols stay unchanged.  Cases 1, 2, and 3 together
keep the computation within the subalphabet $I \cup F \cup \{z\}$, while
case 3 preserves alignments, as well as misalignments.

The postcomputation uses a gate $\alpha:A^2 \to A^2$ such that:
\begin{enumerate}
\item Of necessity,
\[ \alpha(z,z) = (z,z). \]
\item If $a \in I \cap A_0$, then
\[ \alpha(z,a) = (z_1,a) \qquad \alpha(a,z) = (z_2,a) \]
\item If $a_1,a_2 \in I \cup F$ are misaligned, then
\[ \alpha(a_1,a_2) = (\beta(a_1),a_2) \]
for some $G$-equivariant bijection 
\[ \beta: I \cup F \to W \setminus (Gz_1 \cup Gz_2). \]
\item If $a_1,a_2 \in I \cup F$ are aligned, then
\[ \alpha(a_1,a_2) = (a_1,a_2). \]
\item We extend $\alpha$ to the rest of $A^2$ so that $\alpha \in
\Rub_G(A^2)$.
\end{enumerate}
We apply this gate $\alpha$ to each adjacent pair of symbols $(a_i,a_{i+1})$
for $i$ ranging from $1$ to $n-1$ in order.  The final effect is that,
if some (but not all) of the input symbols are zombies, or if any two
symbols are misaligned, then the postcomputation in $D$ creates symbols
in the warning alphabet $W$.

Any input to $D$ with either zombies or misaligned symbols cannot finalize,
since the main computation preserves these syndromes and the postcomputation
then produces warning symbols that do not finalize.  The only spurious input
that finalizes is the all-zombies state $(z,z,\dots,z)$, and otherwise
each input that $C$ accepts yields a single aligned $G$-orbit.  Thus we
obtain the relation
\[ \#D = |G|\#C + 1 \]
between the number of inputs that satisfy $C$ and the number that satisfy
$D$, as desired.
\end{proof}

\subsection{\Thm{th:dt} refined}
\label{ss:refine}

In this subsection and the next one, we let $G$ be a fixed finite simple
group, and we use ``eventually" to mean ``when the genus $g$ is sufficiently
large".

Recall from \Sec{s:intro} that we consider several sets of homomorphisms
of the fundamental group of the surface $\Sigma_g$ to $G$:
\begin{align*}
\hR_g(G) &\defeq \{f:\pi_1(\Sigma_g) \to G\} \\
R_g(G) &\defeq \{f: \pi_1(\Sigma_g) \onto G\} \subseteq \hR_g(G) \\
R^s_g(G) &\defeq \{f \in R_g \mid \sch(f) = s\}.
\end{align*}
For convenience we will write $R_g = R_g(G)$, etc., and only give the
argument of the representation set when the target is some group other
than $G$.

The set $\hR_g$ has an action of $K = \Aut(G)$ and a commuting action of
$\MCG_*(\Sigma_g)$, so we obtain a representation map
\[ \rho:\MCG_*(\Sigma_g) \to \Sym_K(\hR_g). \]
Since $\MCG_*(\Sigma_g)$ is perfect for $g \ge 3$ \cite[Thm. 5.2]{FM:primer}
(and we are excluding small values of $g$), we can let the target be
$\Rub_K(\hR_g)$ instead.  Now $R_g$ and $R_g^0$ are both invariant subsets
under both actions; in particular the representation map projects to maps
to $\Sym_K(\hR_g \setminus R_g)$ and $\Sym_K(R^0_g)$.  At the same time,
$\MCG_*(\Sigma_g)$ acts on $H_1(\Sigma_g) \cong \Z^{2g}$, and we get a
surjective representation map
\[ \tau:\MCG_*(\Sigma_g) \to \Sp(2g,\Z), \]
whose kernel is by definition the Torelli group $\Tor_*(\Sigma_g)$.

The goal of this subsection is the following theorem.

\begin{theorem} The image of the joint homomorphism
\begin{multline*}
\rho_{R^0_g} \times \rho_{\hR_g \setminus R_g} \times \tau:\MCG_*(\Sigma_g) \\
    \to \Rub_K(R^0_g) \times \Rub_K(\hR_g \setminus R_g) \times \Sp(2g,\Z)
\end{multline*}
eventually contains $\Rub_K(R^0_g)$.
\label{th:refine} \end{theorem}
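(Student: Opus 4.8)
The plan is to combine the three results quoted in the excerpt — Dunfield–Thurston (\Thm{th:dt}(2)), the structure theorem for Rubik groups (\Thm{th:rubik}), and the classification of simple quotients of $\Sp(2g,\Z)$ (\Lem{l:symplectic}) — through Goursat-type gluing (\Lem{l:zorn} and \Lem{l:prodquo}). The overall shape of the argument: first promote \Thm{th:dt}(2) to a statement about $\Rub_K(R^0_g)$ itself (not its quotient by $\Aut(G)$) using \Thm{th:rubik}; then show the resulting map to $\Rub_K(R^0_g)$ is still surjective when restricted to the Torelli group; then glue in the $\Sp(2g,\Z)$ factor, checking the two groups share no common simple quotient so that Goursat forces joint surjectivity; and finally absorb the extra $\Rub_K(\hR_g \setminus R_g)$ factor the same way.

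\textbf{Step 1: Lift from $\Alt$ to $\Rub_K$.} Write $n = n(g) = |R^0_g/K|$, the number of $K$-orbits on $R^0_g$; since $G$ is non-abelian simple, the action of $K = \Aut(G)$ on $R^0_g$ is free (a surjection $\pi_1(\Sigma_g)\onto G$ composed with a non-trivial automorphism is a different surjection), so $\Sym_K(R^0_g) \cong \Sym(n,G)$ in the notation of \Sec{ss:rubik}, and $\Rub_K(R^0_g) \cong \Rub(n,G)$. By \Thm{th:dt}(2), for $g$ large the composite $\MCG_*(\Sigma_g) \to \Rub_K(R^0_g) \to \Alt(n)$ is surjective. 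To apply \Thm{th:rubik} I need two further inputs: (i) the image of $\MCG_*(\Sigma_g)$ in $\Rub(n,G)$ is $G$-set $2$-transitive, and (ii) $\Alt(n-2)$ is not a quotient of $G$. For (ii): $G$ is a fixed finite group, so $\Alt(n-2)$ is not a quotient of $G$ once $n$ is large enough, which holds eventually. For (i): $G$-set $2$-transitivity on $R^0_g$ should follow from \Thm{th:dt}(1) — transitivity of $\MCG_*(\Sigma_g)$ on $R^0_g$ itself — applied after doubling the genus (the standard trick of embedding $\Sigma_g \# \Sigma_g \hookrightarrow \Sigma_{2g}$ lets one act independently, up to the alignment bookkeeping, on two ``tensor factors'' of representations), so that one can move any ordered pair of distinct $K$-orbits to any other. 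With (i) and (ii) in hand, \Thm{th:rubik} gives that the image of $\MCG_*(\Sigma_g)$ contains all of $\Rub_K(R^0_g)$ for $g$ large. \emph{This step, in particular verifying $G$-set $2$-transitivity cleanly at the level of $R^0_g$ rather than its quotient, is where I expect the real work to be — it is the honest content behind ``we strengthen \Thm{th:dt} in three ways''.}

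\textbf{Step 2: Pass to the Torelli group.} The map $\tau: \MCG_*(\Sigma_g) \onto \Sp(2g,\Z)$ has kernel $\Tor_*(\Sigma_g)$, and I want $\rho_{R^0_g}(\Tor_*(\Sigma_g)) = \Rub_K(R^0_g)$. By \Lem{l:rubikquo}, the unique simple quotient of $\Rub_K(R^0_g) \cong \Rub(n,G)$ is $\Alt(n)$, for $n \ge 5$ (true eventually). By \Lem{l:symplectic}, for $g \ge 3$ every simple quotient of $\Sp(2g,\Z)$ is of the form $\PSp(2g,\Z/p)$, and for $g \ge 2$ none of these is an alternating group (wrong cardinality, as noted in the text). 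Hence $\Rub_K(R^0_g)$ and $\Sp(2g,\Z)$ share no common simple quotient. Now apply \Lem{l:zorn}(2) to the joint map $\rho_{R^0_g} \times \tau: \MCG_*(\Sigma_g) \to \Rub_K(R^0_g) \times \Sp(2g,\Z)$, which surjects onto each factor (the first by Step 1, the second since $\tau$ is surjective), using that $\Rub_K(R^0_g)$ is finite and hence normally Zornian: the conclusion is that this joint map is surjective, so its kernel, which is $\Tor_*(\Sigma_g)$ intersected with $\ker\rho_{R^0_g}$... more precisely, $\rho_{R^0_g}$ restricted to $\ker\tau = \Tor_*(\Sigma_g)$ is surjective onto $\Rub_K(R^0_g)$. (Joint surjectivity of $A \times B$ means the kernel of the first projection maps onto $B$ and vice versa.)

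\textbf{Step 3: Absorb the $\hR_g \setminus R_g$ factor.} It remains to handle the full three-fold product, i.e.\ to show the image \emph{contains} $\Rub_K(R^0_g)$ (embedded as $\Rub_K(R^0_g) \times \{1\} \times \{1\}$). Equivalently, I want: the subgroup $\ker(\rho_{\hR_g\setminus R_g} \times \tau) \cap \MCG_*(\Sigma_g)$ — call it $L$ — has $\rho_{R^0_g}(L) = \Rub_K(R^0_g)$. I apply \Lem{l:zorn}(1) once more: consider the map from $\MCG_*(\Sigma_g)$ (or from the Torelli group $\Tor_*(\Sigma_g)$, using Step 2 to know it already surjects onto $\Rub_K(R^0_g)$) to $\Rub_K(R^0_g) \times \bigl(\Rub_K(\hR_g\setminus R_g)\bigr)$. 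Since $\Rub_K(R^0_g)$ is normally Zornian with unique simple quotient $\Alt(n)$, \Lem{l:zorn}(1) will give $\Rub_K(R^0_g) \times \{1\}$ in the image provided $\Alt(n)$ is not involved in $\Rub_K(\hR_g\setminus R_g)$. This last point needs an argument about the orbit structure of $\hR_g\setminus R_g$ — the non-surjective homomorphisms factor through proper subgroups of $G$, and one should bound the relevant alternating-group quotients/subquotients of $\Rub_K$ of that $K$-set; since $G$ is fixed while $n = n(g) \to \infty$, any fixed $\Alt(m)$ appearing there is dwarfed, and more carefully one uses \Lem{l:prodquo} on the wreath-product structure to see that $\Alt(n)$ with $n$ growing cannot be involved. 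Combining Steps 1–3 and intersecting the two applications of \Lem{l:zorn} (or iterating them in the right order) yields that the image of $\rho_{R^0_g} \times \rho_{\hR_g\setminus R_g} \times \tau$ eventually contains $\Rub_K(R^0_g)$, which is the theorem.
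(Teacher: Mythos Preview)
Your three-step architecture is exactly the paper's, and Steps~2 and~3 invoke the right lemmas in the right order. Two points, however, are genuine gaps rather than routine details.

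\textbf{Step 1, the 2-transitivity.} Your ``doubling the genus'' idea does not give what you need: you must show $K$-set $2$-transitivity of $\MCG_*(\Sigma_g)$ on $R^0_g$ for the \emph{same} genus $g$, and embedding $\Sigma_g \# \Sigma_g$ into $\Sigma_{2g}$ only tells you something about $R^0_{2g}$. The paper's route is cleaner and different: apply \Thm{th:dt}(1) with the target group $G^2$ rather than $G$. Since $G$ is non-abelian simple, \Lem{l:sjoint} says that a surjection $\pi_1(\Sigma_g)\onto G\times G$ is exactly a pair $(f_1,f_2)$ of surjections onto $G$ lying in distinct $K$-orbits. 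Hence transitivity of $\MCG_*(\Sigma_g)$ on $R^0_g(G^2)$ is \emph{literally} $K$-set $2$-transitivity on $R^0_g(G)$, and \Thm{th:rubik} applies directly. (One should also note, as you implicitly do, that the action of $K$ on $R^0_g$ is free.)

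\textbf{Step 3, the size comparison.} The set $\hR_g\setminus R_g$ is \emph{not} fixed as $g$ grows --- it has order on the scale of $\sum_{H\lneq G}|H|^{2g}$ --- so ``any fixed $\Alt(m)$ appearing there is dwarfed'' is not a valid argument, and \Lem{l:prodquo} alone does not rescue it. What you actually need is the counting estimate \Lem{l:outgrow}: eventually $|R^0_g/K| > |\hR_g\setminus R_g|$, which follows from the Dunfield--Thurston asymptotics that a random $f\in\hR_g$ is surjective with probability tending to $1$ and has equidistributed Schur invariant. That inequality gives $|\Alt(R^0_g/K)| > |\Rub_K(\hR_g\setminus R_g)|$, whence $\Alt(n)$ is not involved in the second factor and \Lem{l:zorn}(1) finishes exactly as you outline.
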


Comparing \Thm{th:refine} to the second part of \Thm{th:dt}, it says that
\Thm{th:dt} still holds for the smaller Torelli group $\Tor_*(\Sigma_g)$,
and after that the action homomorphism is still surjective if we lift
from $\Alt(R^0_g/K)$ to $\Rub_K(R^0_g)$.   Its third implication is
that we can restrict yet further to the subgroup of $\Tor_*(\Sigma_g)$
that acts trivially on $\hR_g \setminus R_g$, the set of non-surjective
homomorphisms to $G$.

The proof uses a lemma on relative sizes of representation sets.

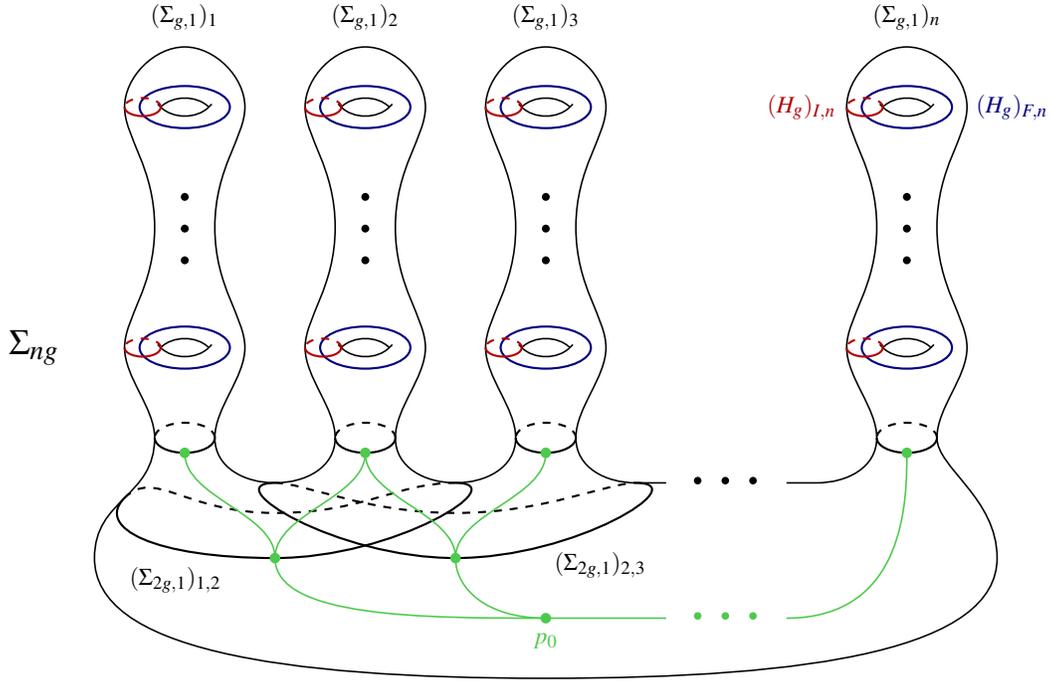
\begin{figure*}[t] \begin{center}
\begin{tikzpicture}[scale=.4,semithick]
\foreach \x in {0,6,12,24} {
\begin{scope}[shift={(\x,0)}]
\draw (1,0) .. controls (1,1) and (0,2) .. (0,3)
    .. controls (0,4) and (1,5) .. (1,7)
    .. controls (1,9) and (0,10) .. (0,11)
    .. controls (0,12) and (1,13) .. (2,13)
    .. controls (3,13) and (4,12) .. (4,11)
    .. controls (4,10) and (3,9) .. (3,7)
    .. controls (3,5) and (4,4) .. (4,3)
    .. controls (4,2) and (3,1) .. (3,0);
\draw (2,7.75) node {\scalebox{3}{$\vdots$}};
\draw (1.2,3) .. controls (1.6,3.4) and (2.4,3.4) .. (2.8,3);
\draw (1.1,3.1) -- (1.2,3) .. controls (1.6,2.6) and (2.4,2.6) .. (2.8,3)
    -- (2.9,3.1);
\draw[darkblue,thick] (2,3) ellipse (1.5 and .7);
\draw[darkred,thick,dashed] (1.2,3) arc (0:180:.6 and .3);
\draw[darkred,thick] (0,3) arc (-180:0:.6 and .3);
\draw (1.2,11) .. controls (1.6,11.4) and (2.4,11.4) .. (2.8,11);
\draw (1.1,11.1) -- (1.2,11) .. controls (1.6,10.6) and (2.4,10.6) .. (2.8,11)
    -- (2.9,11.1);
\draw[darkblue,thick] (2,11) ellipse (1.5 and .7);
\draw[darkred,thick,dashed] (1.2,11) arc (0:180:.6 and .3);
\draw[darkred,thick] (0,11) arc (-180:0:.6 and .3);
\draw[thick,dashed] (3,0) arc (0:180:1 and .5);
\draw[thick] (1,0) arc (-180:0:1 and .5);
\fill[medgreen] (2,-.5) circle (.175);
\end{scope} }
\draw (1,0) .. controls (1,-2) and (-1,-2) .. (-1,-4)
    .. controls (-1,-7) and (5,-8) .. (14,-8)
    .. controls (23,-8) and (29,-7) .. (29,-4)
    .. controls (29,-2) and (27,-1) .. (27,0);
\draw (3,0) .. controls (3,-2) and (7,-2) .. (7,0);
\draw (9,0) .. controls (9,-2) and (13,-2) .. (13,0);
\draw (15,0) .. controls (15,-1) and (16,-1.5) .. (17,-1.5) -- (18,-1.5);
\draw (25,0) .. controls (25,-1) and (24,-1.5) .. (23,-1.5) -- (22,-1.5);
\draw (20.1,-1.5) node {\scalebox{3}{$\cdots$}};
\draw[thick] (11,-1.5) .. controls (13,-1.5) and (9,-4) .. (5,-4)
    .. controls (1,-4) and (-1,-3) .. (0,-2);
\draw[thick,dashed] (0,-2) .. controls (1,-1) and (2,-2.5) .. (5,-2.5)
    .. controls (8,-2.5) and (9,-1.5) .. (11,-1.5);
\draw[thick] (5,-1.5) .. controls (3,-1.5) and (7,-4) .. (11,-4)
    .. controls (15,-4) and (19,-1.5) .. (17,-1.5);
\draw[thick,dashed] (5,-1.5) .. controls (6,-1.5) and (8,-2.5) .. (11,-2.5)
    .. controls (14,-2.5) and (16,-1.5) .. (17,-1.5);
\fill[medgreen] (14,-6) circle (.175);
\fill[medgreen] (11,-4) circle (.175);
\fill[medgreen] (5,-4) circle (.175);
\draw[medgreen] (14,-6) .. controls (10,-6) and (5,-6) .. (5,-4);
\draw[medgreen] (5,-4) .. controls (5,-2.5) and (2,-2) .. (2,-.5);
\draw[medgreen] (5,-4) .. controls (5,-2.5) and (8,-2) .. (8,-.5);
\draw[medgreen] (14,-6) .. controls (12,-6) and (11,-5) .. (11,-4);
\draw[medgreen] (11,-4) .. controls (11,-2.5) and (8,-2) .. (8,-.5);
\draw[medgreen] (11,-4) .. controls (11,-2.5) and (14,-2) .. (14,-.5);
\draw[medgreen] (14,-6) -- (18,-6);
\draw[medgreen] (20.1,-6) node {\scalebox{3}{$\cdots$}};
\draw[medgreen] (22,-6) .. controls (26,-6) and (26,-2) .. (26,-.5);
\draw[medgreen] (14,-6.2) node[anchor=north] {$p_0$};
\draw (2,14) node {$(\Sigma_{g,1})_1$};
\draw (8,14) node {$(\Sigma_{g,1})_2$};
\draw (14,14) node {$(\Sigma_{g,1})_3$};
\draw (26,14) node {$(\Sigma_{g,1})_n$};
\draw[darkblue] (29.5,11) node {$(H_g)_{F,n}$};
\draw[darkred] (22.5,11) node {$(H_g)_{I,n}$};
\draw (-3,3) node {\scalebox{1.5}{$\Sigma_{ng}$}};
\draw (3.5,-4) node[anchor=north east] {$(\Sigma_{2g,1})_{1,2}$};
\draw (14,-3.5) node[anchor=north west] {$(\Sigma_{2g,1})_{2,3}$};
\end{tikzpicture} \end{center}
\caption{The Heegaard surface $\Sigma_{ng}$ with disjoint subsurfaces
$(\Sigma_{g,1})_i$.  Circles that contract in $(H_g)_{I,i} \subset (H_{ng})_I$
are in red, while circles that contract in $(H_g)_{F,i} \subset (H_{ng})_F$
are in blue.  The subsurfaces $(\Sigma_{2g,1})_{i,i+1}$ are also indicated.
The system of basepoints and connecting paths is in green.}
\label{f:heegaard} \end{figure*}

\begin{lemma} Eventually
\[ |R^0_g/K| > |\hR_g \setminus R_g|.\]
\eatline \label{l:outgrow} \end{lemma}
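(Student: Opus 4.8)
The plan is to show that $|\hR_g \setminus R_g|$ is exponentially smaller than $|G|^{2g-1}$, while $|R^0_g|$ is at least a fixed positive multiple of $|G|^{2g-1}$; since $K = \Aut(G)$ has constant order and acts on $R^0_g$ we then have $|R^0_g/K| \ge |R^0_g|/|K|$, and the lemma follows once $g$ is large. We may assume $G$ is non-abelian, since if $G \cong \Z/p$ then $\hR_g \setminus R_g$ consists of the trivial homomorphism alone and the claim is immediate.

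For the upper bound: a non-surjective homomorphism $\pi_1(\Sigma_g) \to G$ factors through some maximal subgroup $M < G$, so it is counted by $\hR_g(M)$, and $|\hR_g(M)| \le |M|^{2g}$ since such a homomorphism is determined by the images of $2g$ generators. Because $G$ is simple and non-abelian it has no subgroup of index at most $4$ --- such a subgroup would give a faithful transitive action on at most four points and hence an embedding of $G$ into the solvable group $\Sym(4)$ --- so every maximal $M$ has $|M| \le |G|/5$. Summing over the finitely many maximal subgroups of $G$ yields $|\hR_g \setminus R_g| \le c_G\,(|G|/5)^{2g}$ with $c_G$ depending only on $G$, and $(|G|/5)^{2g} = o(|G|^{2g-1})$.

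For the lower bound on $|R^0_g|$ I would invoke the Frobenius--Schur formula $|\hR_g(H)| = |H|^{2g-1}\sum_{\chi \in \operatorname{Irr}(H)}\chi(1)^{2-2g}$ \cite{FS:gruppen} for a finite group $H$: when $H$ is perfect the trivial character is its only character of degree $1$, and every other term is $O(2^{2-2g})$, so $|\hR_g(H)| = |H|^{2g-1}(1+o(1))$ as $g \to \infty$. Applied to $G$, together with the exponentially small bound above, this gives $|R_g| = |G|^{2g-1}(1+o(1))$. To extract the Schur-zero part, let $\tilde{G}$ be the universal central extension of $G$, with center $Z = \ker(\tilde{G} \onto G) \cong H_2(G)$; then $\tilde{G}$ is again perfect. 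By obstruction theory a homomorphism $f:\pi_1(\Sigma_g) \to G$, viewed as a map $f:\Sigma_g \to BG$, lifts along $B\tilde{G} \to BG$ precisely when its obstruction class in $H^2(\Sigma_g;Z) \cong Z$ vanishes, and under the isomorphism given by evaluation on $[\Sigma_g]$ that class equals $\langle e, f_*[\Sigma_g]\rangle = \sch(f)$, where $e \in H^2(G;Z) = \operatorname{Hom}(H_2(G),Z)$ is the identity map. Each liftable $f$ has exactly $|\operatorname{Hom}(\pi_1(\Sigma_g),Z)| = |Z|^{2g}$ lifts because $Z$ is central and abelian, so
\[ |\{f \in \hR_g \mid \sch(f)=0\}| = \frac{|\hR_g(\tilde{G})|}{|Z|^{2g}} = \frac{(|G|\,|Z|)^{2g-1}}{|Z|^{2g}}(1+o(1)) = \frac{|G|^{2g-1}}{|H_2(G)|}(1+o(1)). \]
Discarding the non-surjective homomorphisms costs only $o(|G|^{2g-1})$, so $|R^0_g| = |G|^{2g-1}/|H_2(G)| \cdot (1+o(1))$, and eventually $|R^0_g| \ge \tfrac12\,|G|^{2g-1}/|H_2(G)|$.

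Combining the two estimates, eventually $|R^0_g/K| \ge |R^0_g|/|\Aut(G)| \ge |G|^{2g-1}/(2|H_2(G)|\,|\Aut(G)|)$, while $|\hR_g \setminus R_g| \le c_G(|G|/5)^{2g}$; the ratio of the former to the latter grows at least like a constant times $5^{2g}$, hence exceeds $1$ for all sufficiently large $g$. The only genuinely delicate point is the middle paragraph: one must check that $\tilde{G}$ is perfect (so that the Frobenius--Schur estimate has leading term $|\tilde{G}|^{2g-1}$) and, more importantly, that the obstruction to lifting along the Schur cover is exactly $\sch(f)$ and not a twist of it. The rest is bookkeeping with exponentially small error terms.
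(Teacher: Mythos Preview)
Your proof is correct and follows essentially the same route as the paper: both rest on the two asymptotic facts $|R_g|/|\hR_g| \to 1$ and $|R^0_g|/|R_g| \to 1/|H_2(G)|$, from which the inequality is immediate.  The paper simply cites these from Dunfield--Thurston \cite[Lems.~6.10~\&~6.13]{DT:random}, whereas you have re-derived them via the Frobenius--Schur count and the universal central extension---which is exactly how Dunfield and Thurston prove those lemmas in the first place.
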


\begin{proof}
Informally, if $g$ is large and we choose a homomorphism $f \in \hR_g$
at random, then it is a surjection with very high probability;
if it is a surjection, then its Schur invariant $\sch(f)$ is
approximately equidistributed.  In detail, Dunfield-Thurston
\cite[Lems.~6.10~\&~6.13]{DT:random} show that
\[ \lim_{g \to \infty} \frac{|R_g|}{|\hR_g|} = 1 \qquad
    \lim_{g \to \infty} \frac{|R^0_g|}{|R_g|} = \frac{1}{|H_2(G)|}. \]
Thus
\[ \lim_{g \to \infty} \frac{|\hR_g \setminus R_g|}{|R^0_g/K|} =
    |H_2(G)|\cdot|K|\cdot\bigg(\lim_{g \to \infty}
    \frac{|\hR_g|}{|R_g|}-1\bigg) = 0. \qedhere \]
\end{proof}

\begin{proof}[Proof of \Thm{th:refine}] We first claim that $\rho_{R^0_g}$
by itself is eventually surjective. Note that the action of $K$ on $R^0_g$
is free; thus we can apply \Thm{th:rubik} if $\rho_{R^0_g}$ satisfies
suitable conditions.  By part 2 of \Thm{th:dt}, $\rho_{R^0_g}$ is eventually
surjective when composed with the quotient $\Rub_K(R^0_g) \to \Sym(R^0_g/K)$.
Meanwhile, part 1 of \Thm{th:dt} says that $\MCG_*(\Sigma_g)$ eventually acts
transitively on $R^0_g(G^2)$.  Since $G$ is simple, \Lem{l:sjoint} tells us
that the homomorphisms $f \in R^0_g(G^2)$ correspond to pairs of surjections
\[ f_1,f_2:\Sigma_g \onto G \]
that are inequivalent under $K = \Aut(G)$.  This eventuality is thus the
condition that the action of $\MCG_*(\Sigma_g)$ is $K$-set 2-transitive
in its action on $R^0_g$.  (\Cf\ Lemma 7.2 in \cite{DT:random}.)
Thus $\rho_{R^0_g}$ eventually satisfies the hypotheses of \Thm{th:rubik}
and is surjective.

The map $\tau$ also surjects $\MCG_*(\Sigma_g)$ onto $\Sp(2g,\Z)$.
Lemmas~\ref{l:symplectic} and \ref{l:rubikquo} thus imply that
$\Rub_K(R^0_g)$ and $\Sp(2g,\Z)$ do not share any simple quotients.
By \Lem{l:zorn}, $\MCG_*(\Sigma_g)$ surjects onto $\Rub_K(R^0_g) \times
\Sp(2g,\Z)$.  Equivalently, $\ker \tau = \Tor_*(\Sigma_g)$ surjects onto
$\Rub_K(R^0_g)$.

Finally we consider 
\[ \rho_{R^0_g} \times \rho_{\hR_g \setminus R_g}:\Tor_*(\Sigma_g)
    \to \Rub_K(R^0_g) \times \Rub_K(\hR_g \setminus R_g), \]
which we have shown surjects onto the first factor. The unique simple
quotient $\Alt(R^0_g/K)$ of $\Rub_K(R^0_g)$ is eventually not involved in
$\Rub_K(\hR_g \setminus R_g)$ because it is too large.  More precisely,
\Lem{l:outgrow} implies that eventually
\[ |\Alt(R^0_g/K)| > |\Alt(\hR_g \setminus R_g)| >
    |\Rub_K(\hR_g \setminus R_g)|. \]
Thus we can apply \Lem{l:zorn} to conclude that the image of
$\Tor_*(\Sigma_g)$ contains $\Rub_K(R^0_g)$, which is equivalent to the
conclusion.
\end{proof}

\subsection{Mapping class gadgets}
\label{ss:mcg}

In this subsection and the next one, we will finish the proof of
\Thm{th:main}.  We want to convert a suitable $\ZSAT$ circuit $C$ of width
$n$ to a homology 3-sphere $M$.  To this end, we choose some sufficiently
large $g$ that depends only on the group $G$, and we let $\Sigma_{ng}$
be a Heegaard surface of $M$.  This Heegaard surface will be decorated in
various ways that we summarize in \Fig{f:heegaard}.  We use the additional
notation that $\Sigma_{g,k}$ is a surface of genus $g$ with $k$ boundary
circles, with a basepoint on one of its circles.  We give $\Sigma_{g,k}$
the representation set
\[ \hR_{g,k} \defeq \{f:\pi_1(\Sigma_{g,k}) \to G\}. \]
We let $\MCG_*(\Sigma_{g,k})$ be the relative mapping class group (that
fixes $\partial \Sigma_{g,k}$); it naturally acts on $\hR_{g,k}$.

We attach two handlebodies $(H_{ng})_I$ and $(H_{ng})_F$ to $\Sigma_{ng}$
so that
\[ (H_{ng})_I \cup (H_{ng})_F \cong S^3.\]
Although an actual sphere $S^3$ is not an interesting homology sphere
for our purposes, our goal is to construct a homeomorphism $\phi \in
\MCG_*(\Sigma_{ng})$ so that
\[ M \defeq (H_{ng})_I \sqcup_\phi (H_{ng})_F \]
is the 3-manifold that we will produce to prove \Thm{th:main}.  (We could
let $\phi$ be an element of the unpointed mapping class group here, but
it is convenient to keep the basepoint.)

\begin{figure}[htb] \begin{center} \begin{tabular}{l|l}
$\ZSAT_{K,A,I,F}$ & $H(M,G)$ \\ \hline
$n$-symbol memory & Heegaard surface $\Sigma_{ng}$ \\
1-symbol memory & computational subsurface $\Sigma_{g,1}$ \\
binary gate & element of $\MCG_*(\Sigma_{2g})$ \\
circuit $C$ & mapping class $\phi \in \MCG_*(\Sigma_{ng})$ \\
alphabet $A$ & homomorphisms $\pi_1(\Sigma_g^1) \to G$\\
zombie symbol: $z \in A$ & trivial map $z:\pi_1(\Sigma_g^1) \to G$ \\
memory state: $x\in A^n$ & homomorphism $f: \pi_1(\Sigma_{ng}) \to G$ \\
initialization: $x \in (I \cup \{z\})^n$ & $f$ extends to $\pi_1((H_{ng})_I)$ \\
finalization: $y \in (F \cup \{z\})^n$ & $f$ extends to $\pi_1((H_{ng})_F)$ \\
solution: $C(x) = y$ & homomorphism $f:\pi_1(M) \to G$
\end{tabular} \end{center}
\caption{A correspondence between $\ZSAT$ and $H(M,G)$.}
\label{f:corresp} \end{figure}

We identify $n$ disjoint subsurfaces
\[ (\Sigma_{g,1})_1,(\Sigma_{g,1})_2,\dots,
    (\Sigma_{g,1})_n \subseteq \Sigma_{ng} \]
which are each homeomorphic to $\Sigma_{g,1}$.  The handlebodies
$(H_{ng})_I$ and $(H_{ng})_F$ likewise have sub-handlebodies $(H_g)_{I,i}$
and $(H_g)_{F,i}$ of genus $g$ associated with $(\Sigma_{g,1})_i$ and
positioned so that
\[ (H_g)_{I,i} \cup (H_g)_{F,i} \cong B^3. \]
We also choose another set of subsurfaces
\[ (\Sigma_{2g,1})_{1,2},(\Sigma_{2g,1})_{2,3},\dots,
    (\Sigma_{2g,1})_{n-1,n} \subseteq \Sigma_{ng} \]
such that
\[ (\Sigma_{g,1})_i,(\Sigma_{g,1})_{i+1} \subseteq (\Sigma_{2g,1})_{i,i+1}. \]
Finally we mark basepoints for each subsurface $(\Sigma_{g,1})_i$ and
$(\Sigma_{2g,1})_{i,i+1}$, and one more basepoint $p_0 \in \Sigma_{ng}$,
and we mark a set of connecting paths as indicated in \Fig{f:heegaard}.

The circuit conversion is summarized in \Fig{f:corresp}.  We will use the
computational alphabet
\[ A \defeq R^0_g \cup \{z\} \subseteq \hR_g \subseteq \hR_{g,1}, \]
where $z:\pi_1(\Sigma_g) \to G$ is (as first mentioned in \Sec{ss:results})
the trivial homomorphism and the zombie symbol, and the inclusion $\hR_g
\subseteq \hR_{g,1}$ comes from the inclusion of surfaces $\Sigma_{g,1}
\subseteq \Sigma_g$.  Note that $\hR^0_g$ is precisely the subset of
$\hR_{g,1}$ consisting of homomorphisms
\[ f:\pi_1(\Sigma_{g,1}) \to G \]
that are trivial on the peripheral subgroup $\pi_1(\partial \Sigma_{g,1})$.

Each subsurface $(\Sigma_{g,1})_i$ is interpreted as the ``memory unit" of
a single symbol $x_i \in A$.  Using the connecting paths in $\Sigma_{ng}$
between the basepoints of its subsurfaces, and since each $x_i$ is trivial
on $\pi_1(\partial \Sigma_{g,1})$, a data register
\[ x = (x_1,x_2,\dots,x_n) \in A^n \]
combines to form a homomorphism
\[ f:\pi_1(\Sigma_{ng}) \to G. \]
In particular, if $x \ne (z,z,\dots,z)$, then $f \in R_{ng}$.  In other
words, $f$ is surjective in this circumstance because one of its components
$x_i$ is already surjective.  (Note that the converse is not true: we can
easily make a surjective $f$ whose restriction to each $(\Sigma_{g,1})_i$
is far from surjective.)

For every subgroup $K \le G$, we define $I(K)$ to be the set of surjections
\[ x:\pi_1(\Sigma_{g,1}) \onto K \]
that come from a homomorphism 
\[ x:\pi_1((H_g)_I) \onto K. \]
We define $F(K)$ in the same way using $(H_g)_F$. A priori we know that
$I(K), F(K) \subseteq R_{g,1}(K)$.  This inclusion can be sharpened in
two significant respects.

\begin{lemma} The sets $I(K)$ and $F(K)$ are subsets of $R^0_g(K)$.
If $K$ is non-trivial, then they are disjoint.
\label{l:ifr0} \end{lemma}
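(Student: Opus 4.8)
The plan is to prove the two assertions separately, in each case using the topology of the handlebodies and the fact (recorded just above in the construction) that $(H_g)_I$ and $(H_g)_F$ glue along $\Sigma_{g,1}$ to form a ball. For the first assertion, fix $x \in I(K)$, so that $x : \pi_1(\Sigma_{g,1}) \onto K$ factors as $\bar x \circ p_I$, where $p_I : \pi_1(\Sigma_{g,1}) \onto \pi_1((H_g)_I)$ is induced by inclusion and $\bar x : \pi_1((H_g)_I) \onto K$. The first point to check is that $p_I$ kills the peripheral subgroup: the boundary circle of $\Sigma_{g,1}$ bounds the complementary disk $\partial(H_g)_I \setminus \Sigma_{g,1}$, hence is null-homotopic in $(H_g)_I$. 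So $x$ is trivial on $\pi_1(\partial\Sigma_{g,1})$, which places it in $\hR_g(K)$, and being surjective it lies in $R_g(K)$. The second point is the vanishing of the Schur invariant: realize $\bar x$ by a map $(H_g)_I \to BK$ --- legitimate because a handlebody is aspherical --- and restrict it to $\Sigma_g = \partial(H_g)_I$ to get, up to pointed homotopy, the classifying map of $x$ viewed as a surjection $\pi_1(\Sigma_g) \onto K$. Then $\sch(x)$ is the image of $[\Sigma_g]$ under $H_2(\Sigma_g) \to H_2((H_g)_I) \to H_2(K)$, and the middle group is $0$ because $(H_g)_I$ deformation retracts onto a wedge of circles. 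Hence $\sch(x) = 0$ and $x \in R^0_g(K)$; the argument for $F(K)$ is word for word the same.

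For disjointness, assume $K \ne 1$ and take $x \in I(K) \cap F(K)$. Then $\ker x$ contains both $N_I = \ker(p_I)$ and $N_F = \ker(p_F)$, where $p_F : \pi_1(\Sigma_{g,1}) \onto \pi_1((H_g)_F)$ is again induced by inclusion. Applying van Kampen's theorem to $(H_g)_I \cup_{\Sigma_{g,1}} (H_g)_F \cong B^3$, and using that $p_I$ and $p_F$ are both surjective, the amalgamated product collapses to $\pi_1(\Sigma_{g,1})/N_I N_F$, so this quotient equals $\pi_1(B^3) = 1$; in other words $N_I$ and $N_F$ together generate $\pi_1(\Sigma_{g,1})$. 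Therefore $\ker x = \pi_1(\Sigma_{g,1})$, so $x$ is trivial, contradicting that it surjects onto the non-trivial group $K$. Thus $I(K) \cap F(K) = \emptyset$.

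Both parts are essentially routine, so there is no single hard obstacle; the step most in need of care is the Schur-invariant computation, where one must watch basepoints and the identification of $R^0_g$ inside $\hR_{g,1}$, and recognize that the real content is the asphericity of handlebodies together with $H_2$ of a free group being zero. It is also worth emphasizing, for disjointness, that what matters is not merely that the two handlebodies are glued along $\Sigma_{g,1}$, but that the closed-up 3-manifold is simply connected.
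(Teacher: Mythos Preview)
Your proof is correct and follows the same approach as the paper's: the boundary circle bounds a disk in the handlebody, the Schur invariant vanishes because the classifying map factors through the handlebody (whose $H_2$ is zero), and disjointness follows from simple connectivity of $(H_g)_I \cup (H_g)_F \cong B^3$. You spell out the van Kampen and homology steps more explicitly than the paper does; one minor remark is that realizing $\bar x$ by a map to $BK$ requires only that $BK$ is a $K(\pi,1)$, not that the handlebody is aspherical.
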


\begin{proof} First, since $\partial \Sigma_{g,1}$ bounds a disk in
$(H_g)_I$, we obtain that $I(K), F(K) \subseteq R_g(K)$.  Second,
since any $x$ in $I(K)$ or $F(K)$ extends to a handlebody, the cycle
$x_*([\Sigma_g])$ is null-homologous in $BG$ and therefore $\sch(x) =
0.$  Third, since $(H_g)_I \cup (H_g)_F \cong B^3$ is simply connected, a
surjective homomorphism $x \in R_g(K)$ cannot extend to both handlebodies if
$K$ is non-trivial.  Therefore $I(K)$ and $F(K)$ are disjoint in this case.
\end{proof}

The gadgets that serve as binary gates are mapping class elements $\alpha
\in \MCG_*((\Sigma_{2g,1})_{i,i+1})$ that act on two adjacent memory units
$(\Sigma_{g,1})_i$ and $(\Sigma_{g,1})_{i+1}$.  We summarize the effect
of the local subsurface inclusions on representation sets.  In order to
state it conveniently, if $X$ and $Y$ are two pointed spaces, we define a
modified wedge $X \vee_\lambda Y$, where $\lambda$ is a connecting path
between the basepoint of $X$ and the basepoint of $Y$.  \Fig{f:pinched}
shows a surjection from $\Sigma_{2g}$ to $\Sigma_g \vee_\lambda \Sigma_g$,
while \Fig{f:heegaard} has copies of $\Sigma_{g,1} \vee_\lambda \Sigma_{g,1}$
(which has a similar surjection from $\Sigma_{2g,1}$.

\begin{figure*}[htb] \begin{center}
\begin{tikzpicture}[semithick,scale=.4]
\draw (0,-1) .. controls (1,-1) and (2,-2) .. (3,-2)
    .. controls (4,-2) and (5,-1) .. (7,-1)
    .. controls (9,-1) and (10,-2) .. (11,-2)
    .. controls (12,-2) and (13,-1) .. (13,0)
    .. controls (13,1) and (12,2) .. (11,2)
    .. controls (10,2) and (9,1) .. (7,1)
    .. controls (5,1) and (4,2) .. (3,2)
    .. controls (2,2) and (1,1) .. (0,1)
    .. controls (-1,1) and (-2,2) .. (-3,2)
    .. controls (-4,2) and (-5,1) .. (-7,1)
    .. controls (-9,1) and (-10,2) .. (-11,2)
    .. controls (-12,2) and (-13,1) .. (-13,0)
    .. controls (-13,-1) and (-12,-2) .. (-11,-2)
    .. controls (-10,-2) and (-9,-1) .. (-7,-1)
    .. controls (-5,-1) and (-4,-2) .. (-3,-2)
    .. controls (-2,-2) and (-1,-1) .. (0,-1);
\draw[thick] (0,1) arc (90:270:.5 and 1);
\draw[thick,dashed] (0,-1) arc (-90:90:.5 and 1);
\foreach \x in {-11,-3,3,11} {
\begin{scope}[shift={(\x,0)}]
\draw (-.8,0) .. controls (-.4,.4) and (.4,.4) .. (.8,0);
\draw (-.9,.1) -- (-.8,0) .. controls (-.4,-.4) and (.4,-.4) .. (.8,0)
    -- (.9,.1);
\end{scope} }
\draw (7.1,-.05) node {\scalebox{3}{$\cdots$}};
\draw (-6.9,-.05) node {\scalebox{3}{$\cdots$}};
\foreach \x in {-8,8} {
\begin{scope}[shift={(\x,-9)}]
\draw (-6,0) .. controls (-6,-1) and (-5,-2) .. (-4,-2)
    .. controls (-3,-2) and (-2,-1) .. (0,-1)
    .. controls (2,-1) and (3,-2) .. (4,-2)
    .. controls (5,-2) and (6,-1) .. (6,0)
    .. controls (6,1) and (5,2) .. (4,2)
    .. controls (3,2) and (2,1) .. (0,1)
    .. controls (-2,1) and (-3,2) .. (-4,2)
    .. controls (-5,2) and (-6,1) .. (-6,0);
\draw (.1,-.05) node {\scalebox{3}{$\cdots$}};
\end{scope} }
\foreach \x in {-12,-4,4,12} {
\begin{scope}[shift={(\x,-9)}]
\draw (-.8,0) .. controls (-.4,.4) and (.4,.4) .. (.8,0);
\draw (-.9,.1) -- (-.8,0) .. controls (-.4,-.4) and (.4,-.4) .. (.8,0)
    -- (.9,.1);
\end{scope} }
\draw (-2,-9) -- (2,-9); \draw (0,-9) node[anchor=north] {$\lambda$};
\fill (-2,-9) circle (.175); \fill (2,-9) circle (.175);
\draw (-15,0) node[anchor=east] {\scalebox{1.25}{$\Sigma_{2g}$}};
\draw (-15,-9) node[anchor=east]
    {\scalebox{1.25}{$\Sigma_g \vee_\lambda \Sigma_g$}};
\draw (0,-4.5) node {\scalebox{2}{\rotatebox{270}{
    $\relbar\joinrel\twoheadrightarrow$}}};
\end{tikzpicture}
\end{center}
\caption{From $\Sigma_{2g}$ to $\Sigma_g \vee_\lambda \Sigma_g$}
\label{f:pinched} \end{figure*}
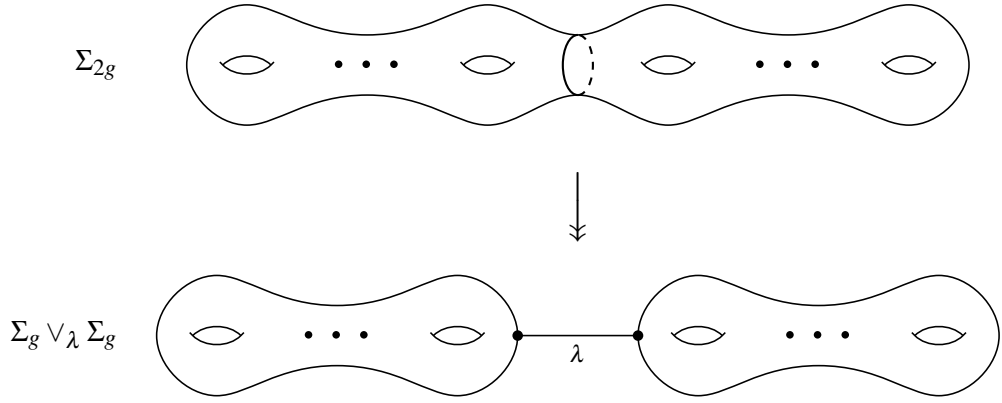

\begin{lemma} The inclusions and surjections
\[ \begin{array}{ccc}
\Sigma_{2g,1} & \subseteq & \Sigma_{2g} \\[-1.5ex]
\vonto  & & \vonto \\[2ex]
\Sigma_{g,1} \vee_\lambda \Sigma_{g,1} & \subseteq
    & \Sigma_g \vee_\lambda \Sigma_g 
\end{array} \]
yield the inclusions
\begin{eq}{e:include1} \begin{array}{ccccccc}
\hR_{2g,1} & \supseteq & \hR_{2g} & \supseteq & R_{2g}
    & \supseteq & R^0_{2g} \\[.5ex]
\veq & & \vsubseteq & & \vsubseteq & & \vsubseteq \\
\hR_{g,1} \times \hR_{g,1} & \supseteq & \hR_g \times \hR_g & \supseteq &
    R_g \times R_g & \supseteq & R^0_g \times R^0_g
\end{array}. \end{eq}
For every pair of subgroups $K_1, K_2 \le G$ that generate $K \le G$,
they also yield
\begin{eq}{e:include2} R^0_g(K_1) \times R^0_g(K_2)
    \subseteq R^0_{2g}(K). \end{eq}
Finally, they yield
\begin{eq}{e:include3} A \times A \subseteq R^0_{2g}
    \cup \{z_{2g}\}, \end{eq}
where $z_g \in R_g$ is the trivial map in genus $g$ and $z_{2g} = (z_g,z_g)$.
\eatline \label{l:include} \end{lemma}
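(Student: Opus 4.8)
The plan is to deduce all three displayed statements from the single geometric picture of \Fig{f:pinched}: the pinch map $p\colon\Sigma_{2g}\onto\Sigma_g\vee_\lambda\Sigma_g$ collapsing a separating curve $\gamma\subseteq\Sigma_{2g}$ together with a collar (the collar becoming the connecting path $\lambda$), and its bounded analogue $\Sigma_{2g,1}\onto\Sigma_{g,1}\vee_\lambda\Sigma_{g,1}$. First I would record the effect on fundamental groups. Since $\gamma$ cuts $\Sigma_{2g}$ into two copies of $\Sigma_{g,1}$, van Kampen gives $\pi_1(\Sigma_g\vee_\lambda\Sigma_g)\cong\pi_1(\Sigma_g)*\pi_1(\Sigma_g)$ together with a \emph{surjection} $p_*\colon\pi_1(\Sigma_{2g})\onto\pi_1(\Sigma_g)*\pi_1(\Sigma_g)$; similarly $\pi_1(\Sigma_{g,1}\vee_\lambda\Sigma_{g,1})\cong\pi_1(\Sigma_{g,1})*\pi_1(\Sigma_{g,1})$, but now the bounded pinch is actually a \emph{homotopy equivalence}: $\Sigma_{2g,1}$ is a boundary connected sum of two copies of $\Sigma_{g,1}$, which deformation retracts onto $\Sigma_{g,1}\vee_\lambda\Sigma_{g,1}$, so that $\pi_1(\Sigma_{2g,1})\cong\pi_1(\Sigma_{g,1})*\pi_1(\Sigma_{g,1})$ is free of rank $4g$. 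Dualizing the square of surfaces then yields \eqref{e:include1}: each row is a chain of evident inclusions of representation sets (removing a disk surjects on $\pi_1$, giving $\hR_{2g,1}\supseteq\hR_{2g}$ and $\hR_{g,1}\times\hR_{g,1}\supseteq\hR_g\times\hR_g$, while $R^0\subseteq R\subseteq\hR$ holds by definition), the left-hand vertical is the equality furnished by the homotopy equivalence, and the remaining verticals are the injections $p^*$; commutativity of the square is functoriality of $p^*$. The one fact to carry forward is that $p^*(f_1,f_2)$ is the homomorphism of $\pi_1(\Sigma_{2g})$ whose image in $G$ is the subgroup $\langle\im f_1,\im f_2\rangle$.

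Next I would prove \eqref{e:include2}. Given surjections $f_i\colon\pi_1(\Sigma_g)\onto K_i$ with $\sch(f_i)=0$ and $\langle K_1,K_2\rangle=K$, the previous observation shows $p^*(f_1,f_2)$ surjects onto $K$, hence lies in $R_{2g}(K)$. For the Schur invariant, the classifying map of $p^*(f_1,f_2)$ factors up to homotopy as $\Sigma_{2g}\xrightarrow{p}\Sigma_g\vee_\lambda\Sigma_g\to BG$, where the second map restricts to the classifying maps of $f_1$ and $f_2$ on the two wedge summands. Since $H_2(\Sigma_g)$ is generated by $[\Sigma_g]$ and $(f_i)_*[\Sigma_g]=\sch(f_i)=0$, each $f_i$ annihilates $H_2(\Sigma_g)$; hence $\Sigma_g\vee_\lambda\Sigma_g\to BG$ annihilates $H_2(\Sigma_g\vee_\lambda\Sigma_g)=H_2(\Sigma_g)\oplus H_2(\Sigma_g)$ (the path $\lambda$ contributing nothing in degree $2$), so $\sch(p^*(f_1,f_2))=0$ and $p^*(f_1,f_2)\in R^0_{2g}(K)$. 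This is \eqref{e:include2}; the special case $K_1=K_2=K=G$ is the right-hand vertical inclusion in \eqref{e:include1}.

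Finally \eqref{e:include3} is a four-way split of $A\times A=(R^0_g\cup\{z\})\times(R^0_g\cup\{z\})$. The zombie symbol $z$ is the trivial homomorphism, i.e.\ a surjection onto the trivial subgroup with $\sch(z)=0$. Applying \eqref{e:include2} with $(K_1,K_2)$ equal to $(G,G)$, $(G,\{1\})$, and $(\{1\},G)$ — each of which generates $K=G$ — places $R^0_g\times R^0_g$, $R^0_g\times\{z\}$, and $\{z\}\times R^0_g$ all inside $R^0_{2g}$. The remaining piece is $\{(z,z)\}$, and $p^*(z,z)$ is the trivial homomorphism $z_{2g}$ of $\pi_1(\Sigma_{2g})$. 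Hence $A\times A\subseteq R^0_{2g}\cup\{z_{2g}\}$, as claimed.

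I expect the main friction to be bookkeeping around the Schur invariant under the pinch: one must check that the classifying map really factors through $p$ (which holds since $BG$ is aspherical and $p_*$ is the correct $\pi_1$-map) and that $H_2$ of the wedge is just $H_2(\Sigma_g)\oplus H_2(\Sigma_g)$, with no contribution from the collapsed collar. Conveniently, this lemma needs only the \emph{vanishing} case of additivity of $\sch$, so no degree computation for $p$ on the two summands is required. The other point deserving care is that the left column of \eqref{e:include1} is an honest equality, not merely an inclusion: unlike the closed pinch $\Sigma_{2g}\to\Sigma_g\vee_\lambda\Sigma_g$, which is only $\pi_1$-surjective, the bounded pinch is a homotopy equivalence, and this is exactly what makes $\hR_{2g,1}=\hR_{g,1}\times\hR_{g,1}$ hold on the nose. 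Everything else is routine manipulation of images of homomorphisms.
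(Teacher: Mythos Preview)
Your proof is correct and follows the same route as the paper's: both use that the bounded pinch is a $\pi_1$-isomorphism while the closed pinch is only $\pi_1$-surjective, that the image of $(f_1,f_2)$ is $\langle\im f_1,\im f_2\rangle$, and that the Schur invariant is additive under the wedge (the paper records this as $\sch((f_1,f_2))=\sch(f_1)+\sch(f_2)$, your version being the equivalent $H_2$-of-the-wedge statement). One notational slip to fix: in your argument for \eqref{e:include2} the classifying map of $p^*(f_1,f_2)$ should land in $BK$, not $BG$, since membership in $R^0_{2g}(K)$ requires $\sch_K=0$; the paper makes the needed naturality step $\sch_{K_i}(f_i)=0\Rightarrow\sch_K(f_i)=0$ (via $BK_i\to BK$) explicit, whereas in your write-up it is left implicit.
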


\begin{proof} The horizontal inclusions are all addressed above; the real
issue is the vertical inclusions and equalities.   We consider the vertical
inclusions from left to right in diagram \eqref{e:include1}.  The surjection
\[ \sigma_1:\Sigma_{2g,1} \onto \Sigma_{g,1} \vee_\lambda \Sigma_{g,1} \]
is an isomorphism of $\pi_1$, while the surjection
\[ \sigma_0:\Sigma_{2g} \onto \Sigma_g \vee_\lambda \Sigma_g \]
is a surjection in $\pi_1$.  This implies the first two vertical
relations.  Then, if two homomorphisms
\[ f_1,f_2:\pi_1(\Sigma_g) \onto G \]
are each surjective, then they are certainly jointly surjective; this
implies the third relation.  Finally, the surjection $\sigma_0$
yields the formula 
\begin{eq}{e:sch} \sch((f_1,f_2)) = \sch(f_1) + \sch(f_2). \end{eq}
The reason is that the image $\sigma_0([\Sigma_{2g}])$ of the fundamental
class of $\Sigma_{2g}$ is the sum of the fundamental classes of the 
two $\Sigma_g$ components.  This yields the fourth, leftmost inclusion
because equation \eqref{e:sch} then reduces to $0 = 0+0$.

To treat \eqref{e:include2}, we claim that if $\sch_{K_i}(f_i) = 0$,
then $\sch_K(f_i) = 0$.  This follows from the fact that each map from
$\Sigma_g$ to the classifying space $BK_i$ and $BK$ forms a commutative
triangle with the map $BK_i \to BK$.  With this remark, inclusion
\eqref{e:include2} can be argued in the same way as the inclusion $R^0_g
\times R^0_g \subseteq R^0_{2g}$.

Finally for inclusion \eqref{e:include3}, recall that $A = R^0_g \cup
\{z_g\}$, and that $z_{2g} = (z_g,z_g)$ since in each case $z$ is the
trivial homomorphism.  The inclusions
\[ R^0_g \times \{z_g\}, \{z_g\} \times R^0_g \subseteq R^0_{2g} \]
can be argued the same way as before: Given the two homomorphisms $f_1,f_2$,
even if one of them is the trivial homomorphism $z_g$, the surjectivity
of the other one gives us joint surjectivity.  Moreover, the trivial
homomorphism $z_g$ has a vanishing Schur invariant $\sch_G(z_g) = 0$
relative to the target group $G$.
\end{proof}

\subsection{End of the proof}
\label{ss:end}

We combine \Thm{th:refine} with Lemmas \ref{l:include} and \ref{l:zsat}
to convert a circuit $C$ in $\ZSAT_{K,A,I,F}$ to a mapping class $\phi \in
\MCG_*(\Sigma_{ng})$ using mapping class gadgets.  To apply \Lem{l:zsat},
we need to verify the conditions in \eqref{e:zineq}.  These follow
easily from asymptotic estimates on the cardinality of $A$ and $I$
\cite[Lems.~6.10~\&~6.11]{DT:random}.

For each $\gamma \in \Rub_K(A \times A)$, we choose an $\alpha \in
\Tor_*(\Sigma_{2g,1})$ such that:
\begin{enumerate}
\item $\alpha$ acts by $\gamma$ on $A \times A$.
\item $\alpha$ acts by an element of $\Rub_K(R^0_{2g})$
that fixes $R^0_{2g} \setminus (A \times A)$.
\item $\alpha$ fixes $\hR_{2g} \setminus R_{2g}$.
\end{enumerate}
Given a circuit $C$ in $\ZSAT_{K,A,I,F}$, we can replace each
gate $\gamma \in \Rub_K(A \times A)$ that acts on symbols $i$
and $i+1$ by the corresponding local mapping class $\alpha \in
\Tor_*((\Sigma_{2g,1})_{(i,i+1)})$.  Then we let $\phi$ be the composition
of the gadgets $\alpha$.

\begin{lemma} Let
\[ M \defeq (H_{ng})_I \sqcup_\phi (H_{ng})_F. \]
Then
\begin{enumerate}
\item $M$ is a homology 3-sphere.
\item If $1 \lneq K \lneq G$ is a non-trivial, proper subgroup of $G$,
then $Q(M,K) = \emptyset$.
\item $\#H(M,G) = \#C.$
\end{enumerate}
\label{l:glue} \end{lemma}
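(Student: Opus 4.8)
The plan is to work at the level of the Heegaard surface $\Sigma_{ng}$. By van Kampen and the surjection $\pi_1(\Sigma_{ng}) \onto \pi_1(M)$, restriction identifies $H(M,G)$ with $\mathcal I \cap \phi(\mathcal F) \subseteq \hR_{ng}$, where $\mathcal I$ (resp.\ $\mathcal F$) is the set of homomorphisms extending over $(H_{ng})_I$ (resp.\ $(H_{ng})_F$). Let $\Phi\colon A^n \to \hR_{ng}$ be the combining map that assembles symbols $x_i \in A \subseteq \hR_{g,1}$ along the connecting paths of \Fig{f:heegaard}; it is injective with image $R = \{\bar h \in \hR_{ng} : \bar h|_{(\Sigma_{g,1})_i} \in A \text{ for all } i\}$, and gadget property (1) together with \eqref{e:include3} shows that each gadget preserves $R$, so $\phi(R)=R$ and $\phi|_R = \Phi \circ C \circ \Phi^{-1}$. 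For part (1): each gadget lies in $\Tor_*(\Sigma_{2g,1})$, and since $(\Sigma_{2g,1})_{i,i+1}$ has connected boundary, its Torelli mapping classes extend to Torelli mapping classes of $\Sigma_{ng}$; hence $\phi \in \Tor_*(\Sigma_{ng})$ and $\phi_*$ is the identity on $H_1(\Sigma_{ng})$. The Mayer--Vietoris presentation of $H_1(M)$ then coincides with that of the standard gluing $(H_{ng})_I \cup (H_{ng})_F \cong S^3$, so $H_1(M)=0$; by Poincar\'e duality and universal coefficients $H_2(M) \cong \mathrm{Hom}(H_1(M),\Z)=0$, so $M$ is a homology 3-sphere.

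The core of the proof is the claim that every $\bar f \in \mathcal I \cap \phi(\mathcal F)$ lies in $R$ --- equivalently, no restriction $\bar f|_{(\Sigma_{g,1})_i}$ has image a proper nontrivial subgroup of $G$. I would prove this by introducing the set $P \subseteq \hR_{ng}$ of homomorphisms whose restriction to each $(\Sigma_{g,1})_i$ and each $(\Sigma_{2g,1})_{i,i+1}$ is trivial on the relevant peripheral subgroup and has vanishing Schur invariant (on the surface obtained by capping that boundary). Using \Lem{l:ifr0} and the sub-handlebodies $(H_g)_{I,i}\subseteq(H_{ng})_I$, $(H_g)_{F,i}\subseteq(H_{ng})_F$ one gets $\mathcal I, \mathcal F \subseteq P$. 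Writing $\bar f = \phi\cdot\bar g$ with $\bar g \in \mathcal F$, follow the sequence $\bar g = \bar g_0,\dots,\bar g_m = \bar f$ obtained by applying the gadgets one at a time: each gadget fixes the boundary circles of its supporting subsurface pointwise, so it preserves $P$, and on $P$ the local pair $p = (\bar g_k|_{(\Sigma_{g,1})_i},\, \bar g_k|_{(\Sigma_{g,1})_{i+1}})$ --- which descends to an element of $\hR_{2g}$ with vanishing total Schur invariant --- is either moved within $A\times A$ by an element of $\Rub_K(A^2)$ (property (1), when $p \in A\times A$) or fixed outright, since if some coordinate of $p$ has proper nontrivial image then $p$ lies in $R^0_{2g}\setminus(A\times A)$ or in $\hR_{2g}\setminus R_{2g}$, which the gadget fixes by properties (2) and (3). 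Hence ``$\bar g_k|_{(\Sigma_{g,1})_i}$ has proper nontrivial image'' is an invariant of the process, and whenever it holds that restriction is never altered; so if $\bar f|_{(\Sigma_{g,1})_i}$ had proper nontrivial image $J_i$, then $\bar g|_{(\Sigma_{g,1})_i} = \bar f|_{(\Sigma_{g,1})_i}$ would lie in both $I(J_i)$ (since $\bar f\in\mathcal I$) and $F(J_i)$ (since $\bar g\in\mathcal F$), contradicting \Lem{l:ifr0}. This proves the claim, and part (2) follows at once: a surjection $\pi_1(M)\onto J$ with $1\lneq J \lneq G$ restricts to $\bar f \in \mathcal I \cap \phi(\mathcal F)$ surjecting onto $J$, but by the claim each $\bar f|_{(\Sigma_{g,1})_i}\in A$ has image inside $J\lneq G$, hence is not surjective onto $G$, hence is the trivial homomorphism; since the subgroups $\pi_1((\Sigma_{g,1})_i)$ together with the peripheral circles $\partial(\Sigma_{g,1})_i$ (all killed by $\bar f$) generate $\pi_1(\Sigma_{ng})$, $\bar f$ and therefore the surjection is trivial, a contradiction, so $Q(M,J)=\emptyset$.

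For part (3), the claim identifies $H(M,G)$ via $\Phi$ with $\{x \in A^n : \Phi(x)\in\mathcal I \text{ and } \phi^{-1}\Phi(x)\in\mathcal F\}$. Since $\phi^{-1}|_R = \Phi\circ C^{-1}\circ\Phi^{-1}$ and $\Phi(x)$ automatically kills every $\partial(\Sigma_{g,1})_i$ and hence extends over the rest of $(H_{ng})_I$ once each $x_i$ extends over $(H_g)_{I,i}$, one has $\Phi(x)\in\mathcal I$ iff $x\in(I(G)\cup\{z\})^n$, and likewise $\phi^{-1}\Phi(x)=\Phi(C^{-1}(x))\in\mathcal F$ iff $C^{-1}(x)\in(F(G)\cup\{z\})^n$; as the initialization and finalization alphabets of the $\ZSAT$ instance are $I=I(G)$ and $F=F(G)$ (which satisfy \eqref{e:zineq} by the cardinality estimates cited just before this lemma, so that \Lem{l:zsat} applies), the substitution $y=C^{-1}(x)$ exhibits this set as the set of inputs satisfying $C$, whence $\#H(M,G)=\#C$. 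I expect the main obstacle to be the core claim: one must simultaneously control peripheral triviality, vanishing of the Schur invariant, and the image of each local restriction across all the gadgets, and this is precisely where the three refinements of \Thm{th:dt} built into \Thm{th:refine} --- passage to the Torelli group, the lift to $\Rub_K(R^0_{2g})$, and triviality on the non-surjective homomorphisms $\hR_{2g}\setminus R_{2g}$ --- together with the disjointness $I(J)\cap F(J)=\emptyset$ of \Lem{l:ifr0}, must all be used at once; parts (1) and (2) are comparatively short.
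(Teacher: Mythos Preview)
Your argument is correct and follows essentially the same route as the paper's proof: both track a homomorphism $f:\pi_1(M)\to G$ through the sequence of intermediate states obtained by applying the gadgets one at a time, use properties (2) and (3) of the gadgets together with the inclusions of \Lem{l:include} to show that any local symbol with proper nontrivial image is frozen throughout, and then invoke the disjointness $I(K)\cap F(K)=\emptyset$ from \Lem{l:ifr0} to obtain the contradiction. The paper phrases this by inserting parallel copies $(\Sigma_{ng})_j$ of the Heegaard surface and tracking the restrictions $f_{j,i}$, while you phrase it via $H(M,G)\cong\mathcal I\cap\phi(\mathcal F)$ and the auxiliary set $P$; these are equivalent viewpoints.

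One small remark: your sentence ``each gadget fixes the boundary circles of its supporting subsurface pointwise, so it preserves $P$'' does not by itself justify preservation of the per-$(\Sigma_{g,1})_i$ conditions in $P$, since a gadget supported on $(\Sigma_{2g,1})_{i,i+1}$ need not fix the inner separating curves $\partial(\Sigma_{g,1})_i$ and $\partial(\Sigma_{g,1})_{i+1}$. However, your subsequent case analysis does supply this: if $p\in A\times A$ it is moved within $A\times A\subseteq \hR_g\times\hR_g$ with each coordinate still in $R^0_g\cup\{z\}$, and otherwise $p$ is fixed outright. So there is no actual gap.
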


\begin{proof} Point 1 holds because by construction,
$\phi \in \Tor(\Sigma_{2g})$.

To address points 2 and 3, we decompose $\phi$ as a composition of 
local gadgets,
\begin{eq}{e:phi} \phi = \alpha_m \circ \alpha_{m-1} \circ \dots \circ
    \alpha_2 \circ \alpha_1, \end{eq}
and we insert parallel copies $(\Sigma_{ng})_j$ of the Heegaard surface
with $0 \le j \le m$, so the $i$th gadget $\alpha_j$ yields a map
\[ \alpha_j:(\Sigma_{ng})_{j-1} \to (\Sigma_{ng})_j \]
from the $(j-1)$-st to the $j$-th surface.  Each $\alpha_j$ is a non-trivial
homeomorphism
\[ \alpha_j:(\Sigma_g)_{j-1,(i,i+1)} \to (\Sigma_{ng})_{j,(i,i+1)} \]
for some $i$, and is the identity elsewhere.  We use this decomposition
to analyze the possibilities for a group homomorphism
\[ f:\pi_1(M) \to G. \]
The map $f$ restricts to a homomorphism
\[ f_j:\pi_1((\Sigma_{ng})_j) \to G, \]
and then further restricts to a homomorphism
\[ f_{j,i}:\pi_1((\Sigma_{g,1})_{j,i}) \to G \]
for the $i$th memory unit for each $i$.   It is convenient to interpret
$\hR_{g,1} \supseteq A$ as the superalphabet of all possible symbols that
could in principle arise as the state of a memory unit.

By construction, each initial symbol $f_{0,i}$ extends to the handlebody
$(H_g)_{I,i}$.   Thus $f_{0,i} \in I(K)$ for some subgroup $1 \le K \le
G$, and all cases are disjoint from $A$ other than $K = 1$ and $K=G$.
Likewise at the end, each $f_{m,i} \in F(K)$ for some $K$.  By construction,
each $\alpha_j$ fixes both $R^0_{2g} \setminus (A \times A)$ and $\hR_{2g}
\setminus R_{2g}$.  This fixed set includes all cases $R^0(K_1) \times
R^0(K_2)$, and therefore all cases $I(K_1) \times I(K_2)$, other than
$K_1,K_2 \in \{1,G\}$.  Thus every initial symbol $f_{0,i} \in I(K)
\not\subseteq A$ is preserved by every gadget $\alpha_j$, and then can't
finalize because $I(K) \cap F(K) = \emptyset$.  Among other things, this
establishes point 2 of the lemma.

This derivation also restricts the initial state $f_0$ to $A^n$.  In this
case, each $\alpha_j$ acts in the same way on $A^n$ as the corresponding gate
$\gamma_j$; consequently it leaves the set $A^n$ invariant.  Considering both
the circuit action and initialization and finalization, these states exactly
match the behavior of the circuit $C$ under the rules of $\ZSAT_{K,A,I,F}$.
\end{proof}

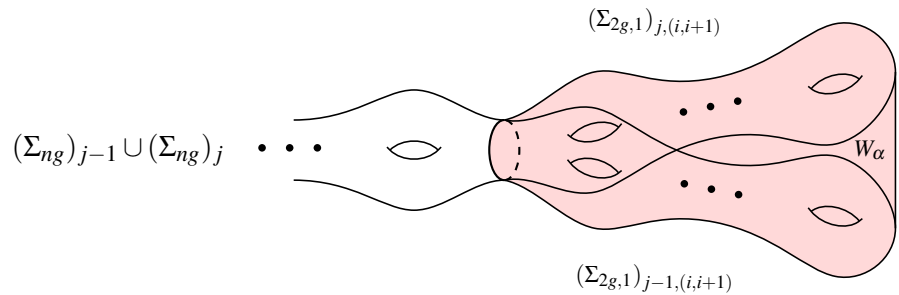
\begin{figure*}[htb] \begin{center}
\begin{tikzpicture}[semithick,scale=.4,even odd rule]
\fill[white!85!red] (0,-1) .. controls (1,-1.2) and (2,-2.4) .. (3,-2.6)
    .. controls (4,-2.8) and (5,-2) .. (7,-2.4)
    .. controls (9,-2.8) and (10,-4) .. (11,-4.2)
    .. controls (12,-4.4) and (13,-3.6) .. (13,-2.6) -- (13,2.6)
    .. controls (13,3.6) and (12,4.4) .. (11,4.2)
    .. controls (10,4) and (9,2.8) .. (7,2.4)
    .. controls (5,2) and (4,2.8) .. (3,2.6)
    .. controls (2,2.4) and (1,1.2) .. (0,1)
    arc  (90:270:.5 and 1);
\draw (13,-2.6) -- (13,2.6);
\draw (0,-1) .. controls (1,-.8) and (2,-1.6) .. (3,-1.4)
    .. controls (4,-1.2) and (5,0) .. (7,.4)
    .. controls (9,.8) and (10,0) .. (11,.2)
    .. controls (12,.4) and (13,1.6) .. (13,2.6)
    .. controls (13,3.6) and (12,4.4) .. (11,4.2)
    .. controls (10,4) and (9,2.8) .. (7,2.4)
    .. controls (5,2) and (4,2.8) .. (3,2.6)
     .. controls (2,2.4) and (1,1.2) .. (0,1);
\draw (2.2,.44) .. controls (2.6,.92) and (3.4,1.08) .. (3.8,.76);
\draw (2.1,.52) -- (2.2,.44) .. controls (2.6,.12) and (3.4,.28)
    .. (3.8,.76) -- (3.9,.88);
\draw (10.2,2.04) .. controls (10.6,2.52) and (11.4,2.68) .. (11.8,2.36);
\draw (10.1,2.12) -- (10.2,2.04) .. controls (10.6,1.72) and (11.4,1.88)
    .. (11.8,2.36) -- (11.9,2.48);
\draw (7,1.4) node {\rotatebox{11.3}{\scalebox{3}{$\cdots$}}};
\draw (0,-1) .. controls (1,-1.2) and (2,-2.4) .. (3,-2.6)
    .. controls (4,-2.8) and (5,-2) .. (7,-2.4)
    .. controls (9,-2.8) and (10,-4) .. (11,-4.2)
    .. controls (12,-4.4) and (13,-3.6) .. (13,-2.6)
    .. controls (13,-1.6) and (12,-.4) .. (11,-.2)
    .. controls (10,0) and (9,-.8) .. (7,-.4)
    .. controls (5,0) and (4,1.2) .. (3,1.4)
     .. controls (2,1.6) and (1,.8) .. (0,1);
\draw (2.2,-.44) .. controls (2.6,-.12) and (3.4,-.28) .. (3.8,-.76);
\draw (2.1,-.32) -- (2.2,-.44) .. controls (2.6,-.92) and (3.4,-1.08)
    .. (3.8,-.76) -- (3.9,-.68);
\draw (10.2,-2.04) .. controls (10.6,-1.72) and (11.4,-1.88) .. (11.8,-2.36);
\draw (10.1,-1.92) -- (10.2,-2.04) .. controls (10.6,-2.52) and (11.4,-2.68)
    .. (11.8,-2.36) -- (11.9,-2.28);
\draw (7,-1.4) node {\rotatebox{-11.3}{\scalebox{3}{$\cdots$}}};
\draw (0,1) .. controls (-1,1) and (-2,2) .. (-3,2)
    .. controls (-4,2) and (-5,1) .. (-7,1);
\draw (-7,-1) .. controls (-5,-1) and (-4,-2) .. (-3,-2)
    .. controls (-2,-2) and (-1,-1) .. (0,-1);
\draw (-2.2,0) .. controls (-2.6,.4) and (-3.4,.4) .. (-3.8,0);
\draw (-2.1,.1) -- (-2.2,0) .. controls (-2.6,-.4) and (-3.4,-.4) .. (-3.8,0)
    -- (-3.9,.1);
\draw (-7,0) node {\scalebox{3}{$\cdots$}};
\draw[thick] (0,1) arc (90:270:.5 and 1);
\draw[thick,dashed] (0,-1) arc (-90:90:.5 and 1);
\draw (13,0) node[anchor=east] {$W_\alpha$};
\draw (5,-3.5) node[anchor=north] {$(\Sigma_{2g,1})_{j-1,(i,i+1)}$};
\draw (5,3.5) node[anchor=south] {$(\Sigma_{2g,1})_{j,(i,i+1)}$};
\draw (-9,0) node[anchor=east] {\scalebox{1.25}{
    $(\Sigma_{ng})_{j-1} \cup (\Sigma_{ng})_j$}};
\end{tikzpicture}
\end{center}
\caption{The blister $W_\alpha$ between $(\Sigma_{2g,1})_{j-1,(i,i+1)}$ and
    $(\Sigma_{2g,1})_{j,(i,i+1)}$.}
\label{f:blister} \end{figure*}

To complete the proof of \Thm{th:main}, we only need to efficiently
triangulate the 3-manifold $M \defeq (H_{ng})_I \sqcup_\phi (H_{ng})_F$.
The first step is to refine the decoration of $\Sigma_{ng}$ shown in
\Fig{f:heegaard} to a triangulation.   It is easy to do this with polynomial
complexity in $n$ (or in $ng$, but recall that $g$ is fixed).  We can also
give each subsurface $(\Sigma_{g,1})_i$ thesame triangulation for all $i$,
as well as each subsurface $(\Sigma_{2g,1})_{i,i+1}$.  It is also routine
to extend any such triangulation to either $(H_{ng})_I$ or $(H_{ng})_F$ with
polynomial (indeed linear) overhead:  Since by construction the triangulation
of each $(\Sigma_{g,1})_i$ is the same, we pick some extension to $(H_g)_I$
and $(H_g)_F$ and use it for each $(H_g)_{I,i}$ and each $(H_g)_{F,i}$.
The remainder of $(H_{ng})_I$ and $(H_{ng})_F$ is a 3-ball whose boundary
has now been triangulated; any triangulation of the boundary of a 3-ball can
be extended to the interior algorithmically and with polynomial complexity.

We insert more triangulated structure in between $(H_{ng})_I$ and
$(H_{ng})_F$ to realize the homeomorphism $\phi$.  Recalling equation
\eqref{e:phi} in the proof of \Lem{l:glue}, $\phi$ decomposes into
local mapping class gadgets $\alpha_j$.  Only finitely many $\alpha \in
\MCG_*(\Sigma_{g,1})$ are needed, since we only need one representative
for each $\gamma \in \Rub_K(A \times A)$.  At this point it is convenient
to use a blister construction.  We make a 3-manifold $W_\alpha$ whose
boundary is two copies of $\Sigma_{2g,1}$ (with its standard triangulation)
that meet at their boundary circle, and so that $W_\alpha$ is a relative
mapping cylinder for the homeomorphism $\alpha$.   If $\alpha_j$ acts
on $(\Sigma_{2g,1})_{i,i+1}$, then we can have $(\Sigma_{ng})_{j-1}$ and
$(\Sigma_{ng})_j$ coincide outside of $(\Sigma_{2g,1})_{j-1,(i,i+1)}$ and
$(\Sigma_{2g,1})_{j,(i,i+1)}$, so that their union $(\Sigma_{ng})_{j-1}
\cup (\Sigma_{ng})_j$ is a branched surface.  We insert $W_\alpha$
and its triangulation in the blister within $(\Sigma_{ng})_{j-1} \cup
(\Sigma_{ng})_j$; see \Fig{f:blister}.

\section{Final remarks and questions}
\label{s:final}

\subsection{Sharper hardness}
\label{ss:sharper}

Even though the proof of \Thm{th:main} is a polynomially efficient reduction,
for any fixed, suitable target group $G$, it is not otherwise particularly
efficient.   Various steps of the proof require the genus $g$ (which is used
to define the symbol alphabet $Y_g^0$) to be sufficiently large.  In fact,
the crucial \Thm{th:dt} does not even provide a constructive lower bound
on $g$.  Dunfield and Thurston \cite{DT:random} discuss possibilities
to improve this bound, and they conjecture that $g \ge 3$ suffices in
\Thm{th:dt} for many or possibly all choices of $G$.  We likewise believe
that there is some universal genus $g_0$ such that \Thm{th:refine} holds
for all $g \ge g_0$.

In any case, the chain of reduction summarized in \Fig{f:reductions}
is not very efficient either.  What we really believe is that the random
3-manifold model of Dunfield and Thurston also yields computational hardness.
More precisely, Johnson showed that the Torelli group $\Tor(\Sigma_g)$
is finitely generated for $g \ge 3$ \cite{Johnson:finite}.  This yields
a model for generating a random homology 3-sphere:  We choose $\phi \in
\Tor(\Sigma_g)$ by evaluating a word of length $\ell$ in the Johnson
generators, and then we let
\[ M \defeq (H_g)_I \sqcup_\phi (H_g)_F. \]
Our \Thm{th:refine} implies that \cite[Thm.~7.1]{DT:random} holds in this
model, \ie, that the distribution of $\#Q(M,G)$ converges to Poisson with
mean $|H_2(G)|/|\Out(G)|$ if we first send $\ell \to \infty$ and then send
$g \to \infty$.  We also conjecture that $\#Q(M,G)$ is hard on average in
the sense of average-case computational complexity \cite[Ch.~18]{AB:modern}
if $\ell$ grows faster than $g$.

Speaking non-rigorously, we conjecture that it is practical to randomly
generate triangulated homology 3-spheres $M$ in such a way that no one
will ever know the value of $\#Q(M,G)$, say for $G = A_5$.  Hence, no one
will ever know whether such an $M$ has a connected 5-sheeted cover.

\subsection{Other spaces}
\label{ss:other}

Maher \cite{Maher:heegaard} showed that the probability that a
randomly chosen $M$ in the Dunfield-Thurston model is hyperbolic converges
to 1 as $\ell \to \infty$, for any fixed $g \ge 2$.  Maher notes that the
same result holds if $M$ is a homology 3-sphere made using the Torelli
group, for any $g \ge 3$.  Thus our conjectures in \Sec{ss:sharper}
would imply that $\#Q(M,G)$ is computationally intractible
when $M$ is a hyperbolic homology 3-sphere.

We conjecture that a version of \Thm{th:main} holds when $M$ fibers over
a circle.  In this case $M$ cannot be a homology 3-sphere, but it can be a
homology $S^2 \times S^1$.  If $M$ fibers over a circle, then the invariant
$H(M,G)$ is obviously analogous (indeed a special case of) counting solutions
to $C(x) = x$ when $C$ is a reversible circuit.  However, the reduction from
$C$ to $M$ would require new techniques to avoid spurious solutions.

In forthcoming work \cite{K:coloring}, we will prove an analogue of
\Thm{th:main} when $M$ is a knot complement.  We will use a theorem of
Roberts and Venkatesh \cite{RV:hurwitz} which is itself an analogue of
\Thm{th:dt} for braid group actions.

\subsection{Non-simple groups}
\label{ss:nonsimple}

We consider the invariant $\#H(M,G)$ for a general finite group $G$.

Recall that the \emph{perfect core} $G_\per$ of a group $G$ is its unique
largest perfect subgroup; if $G$ is finite, then it is also the limit of its
derived series.   If $M$ is a homology sphere, then its fundamental group
is perfect and $H(M,G) = H(M,G_\per)$.  We conjecture then that a version
of \Thm{th:main} holds for any finite, perfect group $G$.  More precisely,
we conjecture that \Thm{th:main} holds for $Q(M,G)$ when $G$ is finite
and perfect, and that the rest of $H(M,G)$ is explained by non-surjective
homomorphisms $f:G \to G$. Mochon's analysis \cite{Mochon:finite} in the
case when $G$ is non-solvable can be viewed as a partial result towards
this conjecture.

If $G$ is finite and $G_\per$ is trivial, then this exactly the case that
$G$ is solvable.  In the case when $M$ is a link complement, Ogburn and
Preskill \cite{OP:topological} non-rigorously conjecture that $H(M,G)$
is not ``universal" for classical computation.  It is very believable
that the relevant actions of braid groups and mapping class groups are
too rigid for any analogue of the second half of \Thm{th:dt} to hold.
Rowell \cite{Rowell:paradigms} more precisely conjectured that $\#H(M,G)$
can be computed in polynomial time for any link complement $M$ and any
finite, solvable $G$.  We are much less confident that this more precise
conjecture is true.


\providecommand{\bysame}{\leavevmode\hbox to3em{\hrulefill}\thinspace}
\providecommand{\MR}{\relax\ifhmode\unskip\space\fi MR }
\providecommand{\MRhref}[2]{%
  \href{http://www.ams.org/mathscinet-getitem?mr=#1}{#2}
}
\providecommand{\href}[2]{#2}
\providecommand{\eprint}{\begingroup \urlstyle{tt}\Url}

\typeout{get arXiv to do 4 passes: Label(s) may have changed. Rerun}

\end{document}